\DeclareMathAlphabet{\mathscr}{OT1}{pzc}{m}{it}
\newtheorem{theorem}{Theorem}
\newtheorem{proposition}{Proposition}[section]
\newtheorem{lemma}[proposition]{Lemma}
\theoremstyle{definition}
\newtheorem{case}{Case}
\numberwithin{equation}{section}
\DeclareMathOperator{\supp}{supp}
\newcommand{\N}{{\mathbb N}}
\newcommand{\R}{{\mathbb R}}
\newcommand{\I}{{\mathcal{I}}}
\newcommand{\J}{{\mathcal{J}}}
\newcommand{\norm}[1]{\lVert #1 \rVert}
\newcommand{\abs}[1]{\lvert #1 \rvert}
\newcommand{\bigabs}[1]{\bigl\lvert #1 \bigr\rvert}
\newcommand{\weakto}{\rightharpoonup}
\newcommand{\dualprod}[2]{\langle #1, #2 \rangle}
\newcommand{\scalprod}[2]{( #1 \vert #2)}
\DeclareMathOperator{\dist}{dist}
\newcommand{\dif}{\,\mathrm{d}}
\title{Semi-classical states for the Choquard equation}
\author{Vitaly Moroz}
\address{Swansea University\\ Department of Mathematics\\ Singleton Park\\
Swansea\\ SA2~8PP\\ Wales, United Kingdom}	
\email{V.Moroz@swansea.ac.uk}
\author{Jean Van Schaftingen}
\address{Universit\'e Catholique de Louvain\\ Institut de Recherche en Math\'ematique et Phy\-sique\\ Chemin du Cyclotron 2 bte L7.01.01\\ 1348 Louvain-la-Neuve \\ Belgium}
\email{Jean.VanSchaftingen@uclouvain.be}
\keywords{Stationary Hartree equation; stationary Choquard equation; stationary Nonlinear Schr\"odinger--Newton equation; Riesz potential; semi-classical limit; nonlocal semilinear elliptic problem; nonlocal penalization method; concentration of solutions; supersolutions}
\subjclass[2010]{35J61 (Primary) 35B09, 35B25, 35B33, 35B40, 35Q55, 45K05 (Secondary)}
\date{\today}
\begin{document}
\begin{abstract}
We study the nonlocal equation
\begin{equation*}
 - \varepsilon^2 \Delta u_\varepsilon + V u_\varepsilon  = \varepsilon^{-\alpha} \bigl(I_\alpha \ast \abs{u_\varepsilon}^p\bigr) \abs{u_\varepsilon}^{p - 2} u_\varepsilon\quad\text{in \(\R^N\)},
\end{equation*}
where \(N \ge 1\), \(\alpha \in (0, N)\), \(I_\alpha (x) = A_\alpha/\abs{x}^{N - \alpha}\)
is the Riesz potential and \(\varepsilon > 0\) is a small parameter.
We show that if the external potential \(V \in C (\R^N; [0, \infty))\) has a local minimum and \(p \in [2, (N + \alpha)/(N - 2)_+)\) then for all small \(\varepsilon > 0\) the problem has a family of solutions concentrating to the local minimum of \(V\) provided that: either \(p > 1 + \max (\alpha, \frac{\alpha + 2}{2})/(N - 2)_+\), or \(p > 2\) and \(\liminf_{\abs{x} \to \infty} V (x) \abs{x}^2 > 0\), or \(p = 2\) and \(\inf_{x \in \R^N} V (x) (1 + \abs{x}^{N - \alpha})>0\).
Our assumptions on the decay of \(V\) and admissible range of \(p\ge 2\) are optimal.
The proof uses variational methods and a novel nonlocal penalization technique that we develop in this work.
\end{abstract}

\maketitle

\tableofcontents

\section{Introduction and results}

\subsection{Introduction}
We consider the nonlinear Choquard equation
\begin{equation}
\tag{$\mathcal{P}_\varepsilon$}
\label{equationNLChoquard}
 - \varepsilon^2 \Delta u_\varepsilon + V u_\varepsilon  = \varepsilon^{-\alpha} \bigl(I_\alpha \ast \abs{u_\varepsilon}^p\bigr) \abs{u_\varepsilon}^{p - 2} u_\varepsilon\quad\text{in \(\R^N\)},
\end{equation}
where the dimension \(N \in \N=\{1,2,\dotsc\}\) is given,
\(V\in C(\R^N,[0,\infty))\) is an external potential,
\(I_\alpha : \R^N\setminus\{0\} \to \R\) is the Riesz potential \cite{Riesz} of order \(\alpha \in (0, N)\) defined for every \(x \in \R^N \setminus \{0\}\) by
\[
  I_\alpha(x)=\frac{\Gamma(\tfrac{N-\alpha}{2})}
                   {\Gamma(\tfrac{\alpha}{2})\pi^{N/2}2^{\alpha} \abs{x}^{N-\alpha}},
\]
\(p \in \big[2,\frac{N+\alpha}{(N-2)_+}\big)\) and \(\varepsilon > 0\) is a small parameter.

For \(N = 3\), \(\alpha = 2\) and \(p = 2\), the equation \eqref{equationNLChoquard} is the  \emph{Choquard--Pekar equation}
which goes back to the description of the quantum theory of a polaron at rest by S.\thinspace I.\thinspace Pekar in 1954 \cite{Pekar1954} and which reemerged in 1976 in the work of P.\thinspace Choquard on the modeling of an electron trapped in its own hole used \eqref{equationNLChoquard},
in a certain approximation to Hartree--Fock theory of one-component plasma \cite{Lieb1977}.
The equation \eqref{equationNLChoquard} was also proposed in the 90's by K.\thinspace R.\thinspace W.\thinspace Jones \cite{KRWJones1995gravitational,KRWJones1995newtonian} and R.\thinspace Penrose \cite{Penrose1996}
as a model of self-gravitating matter and is known in that context as the \emph{Schr\"odin\-ger--Newton equation} \cite{Moroz-Penrose-Tod-1998}.
Finally, if \(u\) is a solution of \eqref{equationNLChoquard} with \(V\equiv 1\) then the wave-function \(\psi\) defined by \(\psi(t,x)=e^{-it/\varepsilon} u(x)\)
is a solitary wave of the focusing time-dependent Hartree equation
\[
 i \varepsilon\psi_t + \varepsilon^2\Delta \psi = -\varepsilon^{-\alpha}(I_\alpha \ast \abs{\psi}^2)\psi\quad\text{in \(\R \times\R^N\)};
\]
the equation \eqref{equationNLChoquard} is thus the \emph{stationary nonlinear Hartree equation}.

When the parameter \(\varepsilon > 0\) is fixed, the existence and qualitative properties of solutions of problem \eqref{equationNLChoquard} have been studied for a few decades by variational methods \citelist{\cite{Lieb1977}\cite{Lions1980}\cite{Lions1984-1}*{Chapter III}\cite{Menzala1980}\cite{Menzala1983}}. Recently, for fixed \(\varepsilon > 0\) and when \(V\) converges exponentially to a positive limit at infinity, the existence at least one positive solution to \eqref{equationNLChoquard} has been proved \citelist{\cite{ClappSalazar}\cite{CingolaniClappSecchi2012}*{theorem~1.2}}.

In quantum physical models, the parameter \(\varepsilon\) is usually the adimensionalized Planck constant, which is generically quite small. In quantum physics it is expected that in the semi-classical limit \(\varepsilon \to 0\) the dynamics should be governed by the classical external potential \(V\). In particular, there should be a correspondence between
\emph{semi-classical} solutions of the equation \eqref{equationNLChoquard} and critical points of the potential \(V\).

Mathematically, it can be observed that if \(u_\varepsilon\) is a solution of \eqref{equationNLChoquard} and \(a_\varepsilon \in \R^N\), then the rescaled function \(v_\varepsilon : \R^N \to \R\) defined for \(y \in \R^N\) by
\[
  v_\varepsilon (y) = u_\varepsilon (a_\varepsilon +\varepsilon y)
\]
satisfies the \emph{rescaled equation}
\[
  - \Delta v_\varepsilon + V(a_\ast+\varepsilon\,\cdot) v_\varepsilon = \bigl(I_\alpha \ast \abs{v_\varepsilon}^p\bigr) \abs{v_\varepsilon}^{p - 2} v_\varepsilon\quad\text{in \(\R^N\)}.
\]
This suggests some convergence, as \(\varepsilon\to 0\), of the family of rescaled solutions \((v_\varepsilon)_{\varepsilon > 0}\) to a  solution \(v_*\) of the \emph{limiting rescaled equation}
\begin{equation}
\label{eqLimit}\tag{$\mathcal{P}_*$}
  - \Delta v_* + V(a_\ast) v_* = \bigl(I_\alpha \ast \abs{v_*}^p\bigr) \abs{v_*}^{p - 2} v_*\quad\text{in \(\R^N\)}.
\end{equation}

For the local nonlinear Schr\"odinger equation
\begin{equation}\label{equationLocal}
 - \varepsilon^2 \Delta u_\varepsilon + V u_\varepsilon = \abs{u_\varepsilon}^{p - 2} u_\varepsilon\quad\text{in \(\R^N\)},
\end{equation}
which is the formal limit of the Choquard equation \eqref{equationNLChoquard} as \(\alpha \to 0\), such families of  semi-classical solutions that concentrate, as \(\varepsilon\to 0\), around one or several critical points of the potential \(V\) have been constructed by various methods during the last decades \citelist{\cite{FloerWeinstein}\cite{delPinoFelmer1997}\cite{delPinoFelmer1998}\cite{AmbrosettiBadialeCingolani}\cite{AmbrosettiMalchiodi2007}\cite{AmbrosettiMalchiodi2006}}.

The question of the existence of semi-classical solutions for the nonlocal problem \eqref{equationNLChoquard} has been posed more recently \cite{AmbrosettiMalchiodi2007}*{p.~29}. In the case  \(N = 2\), \(\alpha = 2\), \(p = 2\),
families of solutions have been constructed by a \emph{Lyapunov--Schmidt type reduction} when \(\inf V > 0\) by Wei Juncheng and M.\thinspace Winter \cite{WeiWinter2009} (see also \citelist{\cite{CingolaniSecchi}}) and when \(V > 0\) and \(\liminf_{\abs{x} \to \infty} V (x) \abs{x}^\gamma > 0\) for some \(\gamma \in [0, 1)\) by S.\thinspace Secchi \cite{Secchi2010}\footnote{It should be noted that lemma 15 in \cite{Secchi2010} only holds when there exists \(\gamma < 1\) such that \(\liminf_{\abs{x} \to \infty} V (x) \abs{x}^\gamma > 0\) (in the first term in (10) therein, one should read \(v(y)\) instead of \(v (x)\)). It is known that if \(\limsup_{\abs{x} \to \infty} V (x) \abs{x}^\gamma = 0\) for some \(\gamma > 1\), the problem (1) considered in \cite{Secchi2010} does not have a positive solution \cite{MVSNENLSNE}*{theorem 3}.}.
This method of construction depends on the existence, uniqueness and nondegeneracy up to translations of the positive solution of the limiting equation \eqref{eqLimit},
which is a difficult problem that has only been fully solved in the case \(N = 3\), \(\alpha = 2\) and \(p = 2\) \citelist{\cite{Lieb1977}\cite{WeiWinter2009}}.

S. Cingolani, S. Secchi and M. Squassina have proved the existence of solutions concentrating around several minimum points of \(V\) by a global penalization method for \(N = 3\), \(p = 2\) and \(\alpha = 2\) \cite{CingolaniSecchiSquassina2010};
in their work the Laplacian can be perturbed by a magnetic field and the Riesz potential
can be replaced by any potential homogeneous of degree \(-1\).

In the present work we prove the existence of a family of positive solutions to \eqref{equationNLChoquard}
which concentrate as \(\varepsilon\to 0\) to a local minimum of the potential \(V\),
in arbitrary dimensions \(N \in \N\), with Riesz potentials \(I_\alpha\) of any order \(\alpha \in (0, N)\),
and general external potentials \(V \in C(\R^N;[0, \infty))\) without any a-priori restrictions
on the decay or growth of \(V\) at infinity and for an optimal range of exponents \(p\ge 2\).
Our proofs use variational methods and a novel \emph{nonlocal penalization technique} that we develop in this work.
For convenience, we formulate our main results separately for the cases \(p=2\) and \(p>2\)

\subsection{The locally linear case $p=2$}
For the classical Choquard nonlinearity \(p = 2\) our main existence and concentration result is the following.

\begin{theorem}
\label{theoremLinear}
Let \(N \in \N\), \(\alpha \in ((N-4)_+, N)\), \(p = 2\) and \(V \in C(\R^N;[0, \infty))\).
Assume that either \(\alpha < N - 2\) or
\[
 \inf_{x \in \R^N} V (x) \bigl(1 + \abs{x}^{N - \alpha}\bigr) > 0.
\]
If \(\Lambda \subset \R^N\) is an open bounded set such that
\[
  0 < \inf_{\Lambda} V < \inf_{\partial \Lambda} V,
\]
then problem \eqref{equationNLChoquard} has a family of positive solutions \((u_\varepsilon)_{\varepsilon \in (0, \varepsilon_0)}\in H^1_{V}(\R^N)\) such that for
a family of points \((a_\varepsilon)_{\varepsilon \in (0, \varepsilon_0)}\) in \(\Lambda\) and for every \(\rho > 0\),
\begin{align*}
  \lim_{\varepsilon \to 0} V (a_\varepsilon) & = \inf_{\Lambda} V,&
  \liminf_{\varepsilon \to 0} \varepsilon^{-N} \int_{B_{\varepsilon \rho} (a_\varepsilon)} \abs{u_\varepsilon}^2 &> 0,&
  \lim_{\substack{R \to \infty\\ \varepsilon \to 0}} \norm{u_\varepsilon}_{L^{\infty} (\R^N \setminus B_{\varepsilon R} (a_\varepsilon))}  &= 0.
\end{align*}
\end{theorem}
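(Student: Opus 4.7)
The plan is to adapt the del Pino--Felmer penalization strategy to the nonlocal Choquard setting. Performing the natural rescaling $v_\varepsilon(y) := u_\varepsilon(\varepsilon y)$ transforms \eqref{equationNLChoquard} into
\[
  -\Delta v_\varepsilon + V(\varepsilon y)\, v_\varepsilon = \bigl(I_\alpha * v_\varepsilon^2\bigr)\, v_\varepsilon \quad\text{in \(\R^N\).}
\]
Fix $x_0 \in \Lambda$ with $V(x_0) = \inf_\Lambda V =: V_0$. Outside the dilated set $\Lambda_\varepsilon := \varepsilon^{-1}\Lambda$, I would introduce a \emph{nonlocal penalization} that truncates the exterior integrand of the Choquard term so that, on the exterior, the nonlocal nonlinearity is pointwise dominated by a small multiple of $V(\varepsilon y)\, v$. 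This is the novel feature: the modification must respect the nonlocal structure, so one needs to separately handle the part of the convolution whose kernel samples inside $\Lambda_\varepsilon$ from the part that samples outside. The resulting penalized functional $\J_\varepsilon$ on $\H^1_V(\R^N)$ has a mountain-pass geometry and satisfies the Palais--Smale condition.

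Applying the mountain pass theorem produces a critical point $v_\varepsilon$ of $\J_\varepsilon$ with energy bounded, uniformly in $\varepsilon$, by the mountain-pass level of the limit functional $\I_{V_0}$ associated to \eqref{eqLimit} at $a_* = x_0$. A concentration--compactness analysis, using this energy ceiling together with the strict inequality $V_0 < \inf_{\partial \Lambda} V$, then shows that $v_\varepsilon$ does not vanish in the sense of Lions and that, up to translation by suitable $b_\varepsilon \in \R^N$, $v_\varepsilon(\cdot - b_\varepsilon)$ converges to a ground state of \eqref{eqLimit} with potential $V(a_*)$, where $a_\varepsilon := \varepsilon b_\varepsilon \to a_*$. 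The energy bound forces $V(a_*) \le V_0$ and the boundary condition excludes $a_* \in \partial \Lambda$, so $a_\varepsilon \in \Lambda$ eventually and $V(a_\varepsilon) \to V_0$; the nonvanishing yields the required mass estimate around $a_\varepsilon$.

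The remaining and most delicate step is to show that the penalization is never triggered, so that $v_\varepsilon$ solves the original rescaled equation and $u_\varepsilon(\cdot) := v_\varepsilon(\varepsilon^{-1}\cdot)$ solves \eqref{equationNLChoquard}. To this end, I would compare $v_\varepsilon$ with a radial supersolution $w(y) = C\abs{y - b_\varepsilon}^{-s}$ of
\[
 -\Delta w + \tfrac12 V(\varepsilon y)\, w \ge \bigl(I_\alpha * v_\varepsilon^2\bigr)\, v_\varepsilon \quad \text{in \(\R^N \setminus B_R(b_\varepsilon)\)}
\]
and invoke the comparison principle. For $p = 2$ the tail of the Choquard term decays exactly like $\abs{y - b_\varepsilon}^{-(N-\alpha)}$, which is the slowest rate treated in this paper. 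For $\alpha < N - 2$ a suitable exponent $s < N - 2$ makes $-\Delta w$ alone dominate the right-hand side, while for $\alpha \ge N - 2$ one is forced to take $s = N - \alpha$ and absorb the nonlocal tail into $V(\varepsilon y)\, w$, which closes precisely under the hypothesis $\inf_{x \in \R^N} V(x)(1 + \abs{x}^{N-\alpha}) > 0$. The resulting pointwise decay simultaneously yields the third assertion of the theorem after undoing the rescaling. The main obstacle is exactly this supersolution/decay argument in the regime $\alpha \ge N - 2$, where the slow nonlocal tail matches the decay of the Newtonian Green function and only the prescribed lower bound on $V$ at infinity closes the comparison.
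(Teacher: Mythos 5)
Your overall architecture (penalize outside $\Lambda$, mountain pass, energy ceiling at the limiting level, concentration analysis, then de-penalize by comparison with a decaying supersolution) matches the paper's. The gap is in the penalization you choose. You truncate the exterior nonlinearity so that it is \emph{dominated by a small multiple of $Vv$}; this is the classical del Pino--Felmer cut-off, and it cannot be removed in the regime $\alpha<N-2$, where the theorem imposes \emph{no} lower bound on $V$ outside $\Lambda$ (e.g.\ $V$ may be compactly supported, or vanish on an open set away from $\Lambda$). Indeed, for a positive solution with $\int u_\varepsilon^2>0$ the Riesz potential has the irremovable fat tail $\varepsilon^{-\alpha}\bigl(I_\alpha\ast u_\varepsilon^2\bigr)(x)\gtrsim \varepsilon^{N-\alpha}\abs{x}^{-(N-\alpha)}>0$ everywhere, so the de-penalization condition $\varepsilon^{-\alpha}(I_\alpha\ast u_\varepsilon^2)\le \delta V$ on $\R^N\setminus\Lambda$ fails wherever $V$ vanishes or decays faster than $\abs{x}^{-(N-\alpha)}$ --- no pointwise decay estimate on $u_\varepsilon$, however strong, can repair this, because your truncation criterion compares the nonlocal term against $V$ rather than against an absolute threshold. (Your scheme can plausibly be closed in the case $p=2$, $\alpha\ge N-2$ with $\inf V(x)(1+\abs{x}^{N-\alpha})>0$, since there the tail of $I_\alpha\ast u_\varepsilon^2$ is matched by $V$ up to the small factor $\varepsilon^{N-\alpha}$; but the branch $\alpha<N-2$ with unrestricted $V$ is the one your proposal does not reach.)

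The paper's resolution --- and its main novelty --- is to run the logic in the opposite direction: first construct a family of supersolutions $\Bar u_\varepsilon$ of the original equation in the exterior domain, then \emph{define} the truncation threshold as the absolute, $\varepsilon$-dependent function $H_\varepsilon:=\chi_{\R^N\setminus\Lambda}\,\Bar u_\varepsilon^{\,p-1}$ (so $g_\varepsilon(x,s)=\min(s_+^{p-1},H_\varepsilon(x))$ outside $\Lambda$), verifying a posteriori that $H_\varepsilon$ satisfies the weighted Hardy-type bound $(H_2)$ and the compactness condition $(H_1)$ needed to make the penalized functional well defined on $H^1_V(\R^N)$ --- a point your sketch also glosses over, since for the quadratic Choquard term even defining $\int\abs{I_{\alpha/2}\ast \mathscr G_\varepsilon(u)}^2$ on $H^1_V$ requires exactly such a weighted inequality. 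With this choice, de-penalization reduces to the pointwise bound $u_\varepsilon\le\Bar U_\varepsilon$ (with $\Bar U_\varepsilon^{\,p-1}\le H_\varepsilon$ outside $\Lambda$), which follows from a nonlocal comparison principle irrespective of how fast $V$ decays. Two smaller inaccuracies: your comparison inequality should carry $w$ (not $v_\varepsilon$) in the zeroth-order factor on the right-hand side for a comparison principle to apply, and in the regime $\alpha\ge N-2$ one is not forced to take the supersolution exponent $s=N-\alpha$ --- what is forced is that the tail of $I_\alpha\ast u_\varepsilon^2$ decays exactly like $\abs{x}^{-(N-\alpha)}$ whatever $s$ is, which is why it must be absorbed by $V$ rather than by $-\Delta w$.
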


Unless otherwise stated we understand solutions of Choquard equation \eqref{equationNLChoquard} in the weak sense,
as critical points of the energy functional
\[
\mathcal{E}_\varepsilon(u)= \frac{1}{2}\int_{\R^N} \Big( \varepsilon^2\abs{\nabla u}^2 + V\abs{u}^2\Big)
 - \frac{1}{2p \varepsilon^\alpha} \int_{\R^N} \bigabs{I_\frac{\alpha}{2}\ast \abs{u}^p}^2,
\]
which is formally associated to \eqref{equationNLChoquard}. Because we do not impose any a-priori assumptions
on the decay of the external potential \(V\) at infinity, \(\mathcal{E}_\varepsilon\) may not be well-defined on the natural Sobolev space \(H^1_{V}(\R^N)\) associated with the local quadratic part of the energy (see section \ref{sect-H-V-definition} below for a precise definition).
The nonlocal penalization method developed in this work allows, in particular, to modify the nonlinearity in such a way that the penalized variational problem becomes well-posed in the space \(H^1_{V}(\R^N)\). We shall comment on some features of the nonlocal penalization method in section~\ref{sect-penalization}.

When \(N \le 2\), the condition on the potential \(V\) is always
\[
  \inf_{x \in \R^N} V (x) \bigl(1 + \abs{x}^{N - \alpha}\bigr) > 0.
\]
In the original Choquard's equation case \(N = 3\) and \(\alpha = 2\) we include the critical decay case \(\inf_{x \in \R^3} V (x) (1 + \abs{x}) > 0\), which is not covered by the arguments in \cite{Secchi2010}.
On a more technical level we also remove the boundedness assumptions on \(V\) and its derivative and replace the assumption of the existence of a nondegenerate critical point of \(V\) by the existence of a strict local minimum of \(V\).
Our proof of theorem \ref{theoremLinear} uses variational methods and
a novel nonlocal penalization technique that we develop in this work, which can be thought as a counterpart of the penalization scheme for the nonlinear Schr\"odinger equation \eqref{equationLocal} \citelist{\cite{BonheureVanSchaftingen2008}\cite{delPinoFelmer1997}\cite{MorozVanSchaftingen2010}}.

A bootstrap argument similar to \cite{MVSGNLSNE}*{Proposition 4.1} shows that
the solutions \((u_\varepsilon)_{\varepsilon \in (0, \varepsilon_0)}\) constructed in theorem \ref{theoremLinear} are classical, in the sense that \(u_\varepsilon\in C^2(\R^N)\).
We emphasize however that our penalization scheme works entirely at the level of weak solutions and does not require any additional a-priori regularity.

The assumptions of theorem \ref{theoremLinear} are optimal in the following sense.
First, the condition \(\alpha \le N - 4\) is justified by the fact that otherwise the limiting equation \eqref{eqLimit} has no finite energy solutions.
\begin{theorem}
\label{theoremExistenceLimitLinear}
Let \(N\in\N\), \(\alpha \in (0, N)\) and \(p = 2\). Then the limiting equation \eqref{eqLimit} has a nontrivial solution in the space \(H^1(\R^N) \cap C^2 (\R^N)\) if and only if
\(\alpha > N - 4\).
\end{theorem}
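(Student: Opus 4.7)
I would treat the two implications separately: variational existence for the sufficiency and a Pohozaev--Nehari comparison for the necessity. Since the coefficient $V(a_*)$ enters as a positive parameter, the scaling $v(x)\mapsto V(a_*)^{(\alpha+2)/4}\,v\bigl(V(a_*)^{1/2}x\bigr)$ normalizes \eqref{eqLimit} to
\[
-\Delta w + w = \bigl(I_\alpha \ast w^2\bigr)\,w \quad\text{in } \R^N,
\]
so it suffices to work with this normalized equation.

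For $\alpha>N-4$ (automatic when $N\le 4$), this condition is precisely what places $\abs{v}^2\in L^{2N/(N+\alpha)}(\R^N)$ for every $v\in H^1(\R^N)$, so that Hardy--Littlewood--Sobolev renders the action
\[
\mathcal{E}(v)=\tfrac{1}{2}\int_{\R^N}\bigl(\abs{\nabla v}^2+v^2\bigr)-\tfrac{1}{4}\int_{\R^N}\bigl(I_\alpha \ast v^2\bigr)v^2
\]
well-defined and $C^1$ on $H^1(\R^N)$. A standard mountain-pass construction, with Palais--Smale compactness restored either by Lions' concentration-compactness or by minimization on the Pohozaev manifold, then yields a nontrivial critical point $v\in H^1(\R^N)$; elliptic bootstrap upgrades $v$ to $C^2(\R^N)$. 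This is the classical groundstate existence for the Choquard equation.

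For $\alpha\le N-4$ (which forces $N\ge 5$), suppose to the contrary that $v\in H^1(\R^N)\cap C^2(\R^N)$ is a nontrivial solution. Testing against $v$ yields the Nehari identity
\[
\int_{\R^N}\abs{\nabla v}^2 + \int_{\R^N}v^2 = \int_{\R^N}\bigl(I_\alpha \ast v^2\bigr)v^2,
\]
while testing against $x\cdot\nabla v$, after cutoff and limit, yields the Pohozaev identity
\[
\tfrac{N-2}{2}\int_{\R^N}\abs{\nabla v}^2 + \tfrac{N}{2}\int_{\R^N}v^2 = \tfrac{N+\alpha}{4}\int_{\R^N}\bigl(I_\alpha \ast v^2\bigr)v^2.
\]
Eliminating the nonlocal term by the combination $\tfrac{N+\alpha}{4}(\text{Nehari})-(\text{Pohozaev})$ leaves
\[
\tfrac{\alpha-N+4}{4}\int_{\R^N}\abs{\nabla v}^2+\tfrac{\alpha-N}{4}\int_{\R^N}v^2=0.
\]
When $\alpha\le N-4$ both coefficients are nonpositive and, since $\alpha<N$, the second is strictly negative; as $v\not\equiv 0$ forces $\int_{\R^N}v^2>0$, this is a contradiction.

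The main technical point is the rigorous justification of the Pohozaev identity, since $x\cdot\nabla v\notin H^1(\R^N)$ and the nonlocal term requires integration by parts with sufficient decay at infinity. I would first establish polynomial decay of $v$ by comparison with a decaying supersolution of $-\Delta u+u\ge 0$ valid outside a large ball (equivalently by Moser iteration on the tail), and then integrate the equation against $\eta_R(x)(x\cdot\nabla v)$ for a standard cutoff $\eta_R$, passing to the limit $R\to\infty$. With this decay control, the nonlocal piece fits into the same integration-by-parts scheme as the local terms via the scaling identity $\int (I_\alpha\ast v_\lambda^2)v_\lambda^2=\lambda^{-N-\alpha}\int(I_\alpha\ast v^2)v^2$ and differentiation at $\lambda=1$.
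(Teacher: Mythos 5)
Your proposal is correct and follows essentially the same route the paper takes, which is to delegate the sufficiency to the classical variational existence theory (Lions, Moroz--Van Schaftingen) and the necessity to the Poho\v{z}aev identity combined with the Nehari identity; your coefficient computation $\tfrac{\alpha-N+4}{4}\int\abs{\nabla v}^2+\tfrac{\alpha-N}{4}\int v^2=0$ and the sign discussion for $\alpha\le N-4$ are exactly right. The only minor imprecision is that $\alpha>N-4$ is needed strictly for subcriticality (hence compactness in the mountain-pass argument), whereas the embedding $v^2\in L^{2N/(N+\alpha)}$ already holds at the endpoint $\alpha=N-4$; this does not affect the argument.
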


The sufficiency for the existence of solutions in the above theorem traces back to the work of P.-L. Lions \cite{Lions1980} (see also \citelist{\cite{MVSGNLSNE}\cite{GenevVenkov2012}\cite{Ma-Zhao-2010}}), the necessity part follows from the Poho\v zaev identity for  \eqref{eqLimit} \citelist{\cite{Menzala1983}\cite{CingolaniSecchiSquassina2010}*{lemma 2.1}\cite{MVSGNLSNE}*{theorem 2}\cite{MVSGGCE}*{proposition~3.5}}.
For \(\alpha \le N - 4\) theorem \ref{theoremExistenceLimitLinear} does not exclude solutions concentrating
around a zero of a nonnegative potential. Such solutions were constructed for the local nonlinear Schr\"odinger equation \citelist{\cite{ByeonWang2002}\cite{ByeonWang2003}}.

The restriction on the decay of \(V\) at infinity when \(\alpha > N - 2\) is optimal
in view of a nonlinear Liouville theorem for supersolutions of \eqref{equationNLChoquard}.

\begin{theorem}[Moroz  and Van Schaftingen \cite{MVSNENLSNE}*{theorems 1 and 3}]
\label{theoremLiouvilleLinear}
Let \(N \in \N\), \(\alpha \in (0, N)\), \(p = 2\), \(V \in C(\R^N;[0, \infty))\) and \(\varepsilon > 0\).
If \eqref{equationNLChoquard} admits a positive distributional supersolution in \(\R^N\) and \(\alpha > N - 2\), then for every \(\gamma > N - \alpha\),
\[
  \limsup_{\abs{x} \to \infty} V (x) \abs{x}^{\gamma} > 0.
\]
\end{theorem}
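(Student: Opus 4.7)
The plan is by contradiction: assume \(\limsup_{\abs{x}\to\infty} V (x)\abs{x}^{\gamma} = 0\) for some \(\gamma > N-\alpha\), and show that no positive distributional supersolution \(u\) of \eqref{equationNLChoquard} with \(p=2\) can exist. The strategy is to use the nonlocal term to manufacture a Hardy-type lower bound, reducing to a pure Schr\"odinger inequality with a slowly decaying potential, and then to appeal to a linear Liouville principle.

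First, I would produce a pointwise lower bound on the convolution. Upgrading \(u\) to a continuous strictly positive function by standard elliptic regularity and the strong maximum principle (the right-hand side is nonnegative), one fixes \(R_0 > 0\) with \(m := \int_{B_{R_0}} u^2 > 0\). For \(\abs{x} \geq 2 R_0\) and \(\abs{y} \leq R_0\) one has \(\abs{x-y} \leq \tfrac{3}{2}\abs{x}\), hence \(I_\alpha(x-y) \geq c_\alpha \abs{x}^{\alpha - N}\), so
\[
(I_\alpha \ast \abs{u}^2)(x) \geq c_\alpha m \abs{x}^{\alpha - N}
\]
for large \(\abs{x}\). Substituting into the supersolution inequality and using that \(V(x) \abs{x}^{N-\alpha} \to 0\) (which follows from \(\gamma > N-\alpha\)), one absorbs \(V u\) into the right-hand side and arrives at the pure exterior Schr\"odinger inequality
\[
 - \Delta u \geq \frac{C}{\abs{x}^{N-\alpha}}\, u \quad\text{on \(\R^N \setminus B_{R_1}\)}
\]
for some \(C > 0\) and \(R_1\) large.

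Set \(\sigma := N - \alpha\); by hypothesis \(\sigma \in (0,2)\). I would then invoke the Allegretto--Piepenbrink principle: the existence of a positive supersolution of \(-\Delta - c\abs{x}^{-\sigma}\) on an open set \(\Omega\) forces the first Dirichlet eigenvalue of that operator on \(\Omega\) to be nonnegative. Applied on the annulus \(\Omega_R := B_{2R}\setminus \overline{B_R}\) with the rescaled test function \(\varphi(x) := \varphi_0 (x/R)\) for a fixed \(\varphi_0 \in C^\infty_c (B_2 \setminus \overline{B_1})\), scaling gives
\[
 \frac{\int \abs{\nabla \varphi}^2}{\int \varphi^2} = \frac{\norm{\nabla \varphi_0}_2^2}{\norm{\varphi_0}_2^2}\, R^{-2}, \qquad \frac{\int \abs{x}^{-\sigma}\varphi^2}{\int \varphi^2} = \frac{\int \abs{y}^{-\sigma}\varphi_0^2}{\norm{\varphi_0}_2^2}\, R^{-\sigma}.
\]
Since \(\sigma < 2\), the second ratio dominates the first as \(R \to \infty\), so the Rayleigh quotient of \(-\Delta - C\abs{x}^{-\sigma}\) on \(\Omega_R\) becomes strictly negative for large \(R\), contradicting the nonnegativity of the first Dirichlet eigenvalue.

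I expect the main delicate step to be the Allegretto--Piepenbrink passage from a positive distributional supersolution to the eigenvalue inequality, since this requires enough regularity to use \(\log u\) (or the Hardy-type test function \(\varphi^2/u\)); but this is unlocked as soon as the right-hand side is recognized to be locally integrable, via standard elliptic regularity for \(-\Delta u \geq 0\). Once that substitution is justified, the remainder of the argument is a purely scaling computation driven entirely by the subcriticality \(\sigma = N-\alpha < 2\) of the effective Hardy potential, which is precisely the quantitative content of the hypothesis \(\alpha > N-2\).
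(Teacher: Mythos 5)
The paper states this theorem without proof, quoting it from \cite{MVSNENLSNE}; your argument is correct and follows the same strategy as that reference and as the paper's own Section~5 nonexistence proofs, namely a pointwise lower bound \(I_\alpha\ast u^2\gtrsim |x|^{\alpha-N}\) coming from the mass of \(u\) on a fixed ball, reduction to the linear Hardy-type inequality \(-\Delta u\ge C|x|^{-\sigma}u\) with \(\sigma=N-\alpha<2\) in an exterior domain, and the ground-state transformation (Agmon / Allegretto--Piepenbrink) tested on scaled annular functions. The only point to phrase more carefully is the preliminary regularity step: the right-hand side \(\varepsilon^{-\alpha}(I_\alpha\ast u^2)u-Vu\) is not obviously nonnegative, so the local positive lower bounds on \(u\) needed for the test function \(\varphi^2/u\) should be obtained from the weak Harnack inequality for the supersolution inequality \(-\varepsilon^2\Delta u+Vu\ge 0\) (or from superharmonicity once \(-\Delta u\ge C|x|^{-\sigma}u\ge 0\) is established on the exterior domain), rather than from the strong maximum principle applied to \(-\Delta u\ge 0\) globally.
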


For the local nonlinear Schr\"odinger equation the previous two obstructions --- nonexistence of finite-energy solutions to the limiting problem and Liouville theorems in outer domains --- are the only one.
For the Choquard equation in the r\'egime \(\alpha \ge N - 2\) we observe a new and essentially nonlocal phenomenon of a \emph{strong critical potential well}: if the potential \(V\) vanishes somewhere in \(\R^N\setminus\Lambda\) significantly enough,
then problem \eqref{equationNLChoquard} does not have a family of solutions \emph{that concentrates inside \(\Lambda\)}.

\begin{theorem}\label{theoremNonConcentrationSpecific}
Let \(N \in \N\), \(\alpha \in (0, N)\), \(p = 2\) and \(V \in C(\R^N;[0, \infty))\).
Assume that \(\alpha > N - 2\) and, as \(x \to a_*\),
\begin{equation}\label{vanishingrate}
 V (x)= o \big(\abs{x - a_*}^{\frac{4}{\alpha + 2 - N}-2}\big).
\end{equation}
If \((u_\varepsilon)_{\varepsilon \in (0, \varepsilon_0)}\) is a family in \(H^1_\mathrm{loc}(\R^N)\cap L^2((1+\abs{x})^{-(N-\alpha)}\dif x)\) of positive solutions of \eqref{equationNLChoquard} then for every compact set \(K\subset \R^N\setminus\{a_*\}\), as \(\varepsilon\to 0\),
\[
 \int_{K} \abs{u_\varepsilon}^2 = o (\varepsilon^N).
\]
\end{theorem}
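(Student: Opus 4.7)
The plan is to argue by contradiction: suppose there exist $c_0 > 0$ and a sequence $\varepsilon_n \to 0$ along which $\int_K \abs{u_{\varepsilon_n}}^2 \ge c_0 \varepsilon_n^N$. The mass of $u_{\varepsilon_n}$ on $K$ then generates a lower bound on the nonlocal term near $a_*$, and the fast vanishing of $V$ at $a_*$ given by \eqref{vanishingrate} amplifies this into a pointwise linear differential inequality on a suitably chosen small ball around $a_*$ which no positive function can satisfy.

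First I would quantify the contribution of $K$ to the convolution: since $\dist (a_*, K) > 0$, there exist $r > 0$ and $D > 0$ with $\abs{x - y} \le D$ for every $x \in B_r (a_*)$ and every $y \in K$. The explicit form of the Riesz potential then gives
\[
  \bigl(I_\alpha \ast \abs{u_\varepsilon}^2\bigr)(x)
   \ge \frac{A_\alpha}{D^{N - \alpha}} \int_K \abs{u_\varepsilon}^2
   \ge C_0 \varepsilon^N \qquad \text{for all } x \in B_r (a_*),
\]
with $C_0 > 0$ independent of $\varepsilon$, and feeding this into \eqref{equationNLChoquard} produces the pointwise inequality $-\varepsilon^2 \Delta u_\varepsilon + V u_\varepsilon \ge C_0 \varepsilon^{N - \alpha} u_\varepsilon$ on $B_r (a_*)$.

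Next I would select the correct scale. Setting $\beta := \tfrac{4}{\alpha + 2 - N} - 2$, which is positive because $\alpha > N - 2$, a direct computation gives $\beta \cdot \tfrac{\alpha + 2 - N}{2} = N - \alpha$, so with $r_\varepsilon := C_1 \varepsilon^{(\alpha + 2 - N)/2}$ one has $r_\varepsilon^\beta = C_1^\beta \varepsilon^{N - \alpha}$. By \eqref{vanishingrate}, for every $\eta > 0$ there is $\delta (\eta) > 0$ with $V (x) \le \eta \abs{x - a_*}^\beta$ on $B_{\delta (\eta)} (a_*)$, and for $\varepsilon$ sufficiently small $r_\varepsilon \le \min (r, \delta (\eta))$. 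Choosing $\eta < C_0/(2 C_1^\beta)$ yields $V \le \tfrac{1}{2} C_0 \varepsilon^{N - \alpha}$ on $B_{r_\varepsilon} (a_*)$, so after subtracting $V u_\varepsilon$ and dividing by $\varepsilon^2$ the previous inequality becomes
\[
  - \Delta u_\varepsilon \ge \tfrac{1}{2} C_0 \varepsilon^{N - \alpha - 2} u_\varepsilon \qquad \text{in } B_{r_\varepsilon} (a_*).
\]

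The conclusion then follows from the Allegretto--Piepenbrink principle (a Picone-type inequality, valid for positive $H^1_{\mathrm{loc}}$ supersolutions): the positivity of $u_\varepsilon$ forces the coefficient on the right-hand side to be bounded above by the first Dirichlet eigenvalue $\lambda_1 (B_{r_\varepsilon}) = \lambda_1 (B_1) r_\varepsilon^{-2}$. Substituting the value of $r_\varepsilon$ and cancelling the common factor $\varepsilon^{N - \alpha - 2}$ leaves the purely numerical constraint $C_1^2 \le 2 \lambda_1 (B_1)/C_0$, which we defeat by picking $C_1$ large at the outset (and $\eta$ correspondingly small), producing the required contradiction. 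The main technical point—and the reason the exponent in \eqref{vanishingrate} is sharp—is the exact matching of the eigenvalue scale $r_\varepsilon^{-2}$ with the amplitude $\varepsilon^{N - \alpha - 2}$ produced by the nonlocal term, so that the single free constant $C_1$ suffices to close the argument.
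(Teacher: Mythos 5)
Your argument is correct and is essentially the paper's proof in contrapositive form: the paper tests the equation against \(\varphi^2/u_\varepsilon\) (the ground-state transformation, which is exactly your Allegretto--Piepenbrink/Picone step) with \(\varphi\) supported in a ball of radius \(\rho\sim\varepsilon^{(\alpha+2-N)/2}\) around \(a_*\), and bounds the nonlocal term from below by the mass on \(K\), precisely as you do. The only difference is presentational --- you phrase the conclusion as an eigenvalue comparison \(\tfrac12 C_0\varepsilon^{N-\alpha-2}\le\lambda_1(B_{r_\varepsilon})\) reached by contradiction, whereas the paper keeps the inequality quantitative and reads off \(\varepsilon^{-N}\int_K\abs{u_\varepsilon}^2\le C\eta+o(1)\) directly.
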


In this case, the nonlocal interaction with the critical potential well
forces the rescaled mass to vanish outside \(a_*\).

The borderline case \(\alpha = N - 2\) is the most delicate. We establish a uniform bound
on the rescaled mass.

\begin{theorem}\label{theoremNonConcentrationSpecificInfinity}
Let \(N \ge 3\), \(p = 2\), \(V \in C(\R^N;[0, \infty))\). Assume that \(\alpha = N - 2\).
If \(V\) vanishes on a nonempty open set \(U\subset\R^N\),
then there exists a constant \(C > 0\) such that for every positive solution
\(u_\varepsilon\in H^1_\mathrm{loc}(\R^N)\cap L^2((1+\abs{x})^{-2}\dif x)\)
of \eqref{equationNLChoquard} it holds
\[
 \frac{1}{\varepsilon^N} \int_{\R^N} \abs{u_\varepsilon}^2 \le C.
\]
If
\[
 \lim_{R \to \infty} \frac{1}{R^{N - 2}} \int_{B_{2 R} \setminus B_R} V = 0
\]
then for every positive solution \(u_\varepsilon\in H^1_\mathrm{loc}(\R^N)\cap L^2((1+\abs{x})^{-2}\dif x)\) of \eqref{equationNLChoquard} it holds
\[
 \frac{1}{\varepsilon^N} \int_{\R^N} \abs{u_\varepsilon}^2 \le \Gamma(\tfrac{N - 2}{2})\pi^{N/2}2^{N - 2} \Bigl(\frac{N - 2}{2} \Bigr)^2.
\]
\end{theorem}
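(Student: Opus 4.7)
The starting point for both statements is a Picone--Allegretto--Piepenbrink type inequality. Testing the equation against $\phi^2/u_\varepsilon$ for $\phi \in C_c^\infty(\R^N)$ and using Picone's inequality $\nabla u_\varepsilon \cdot \nabla(\phi^2/u_\varepsilon) \leq \abs{\nabla\phi}^2$ gives
\begin{equation*}
\int_{\R^N} \bigl(I_{N-2} \ast u_\varepsilon^2\bigr)\phi^2 \;\leq\; \varepsilon^N \int_{\R^N} \abs{\nabla\phi}^2 \;+\; \varepsilon^{N-2} \int_{\R^N} V\phi^2.
\end{equation*}
Writing $I_{N-2}(x) = C_{N-2}\abs{x}^{-2}$ with $C_{N-2} = [\Gamma(\tfrac{N-2}{2})\pi^{N/2}2^{N-2}]^{-1}$, Fubini recasts the left-hand side as $C_{N-2}\int u_\varepsilon^2(x)\bigl(\abs{\cdot}^{-2}\ast \phi^2\bigr)(x)\,dx$, reducing both parts to a careful choice of $\phi$.

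For the sharp second assertion, I would take radial test functions $\phi_R(y) = f(\abs{y}/R)$ with $f \in C_c^\infty((r_1, r_2))$ supported in an annulus bounded away from the origin. Scaling gives $\norm{\nabla \phi_R}_2^2 = R^{N-2}\norm{\nabla f}_2^2$, while a dyadic decomposition of $\{R r_1 \leq \abs{y} \leq R r_2\}$ together with the hypothesis $\int_{B_{2R}\setminus B_R}V = o(R^{N-2})$ yields $\int V\phi_R^2 = o(R^{N-2})$. The decisive observation is that as $R \to \infty$, $\abs{y}/\abs{x-y} \to 1$ uniformly for $y \in \supp \phi_R$ and $x$ in any fixed compact set, whence
\begin{equation*}
\bigl(\abs{\cdot}^{-2}\ast \phi_R^2\bigr)(x) = \bigl(1+o(1)\bigr)R^{N-2}\int_{\R^N} \frac{f^2(z)}{\abs{z}^2}\,dz
\end{equation*}
uniformly on compacta. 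Feeding this into the Picone inequality, dividing by $R^{N-2}$, sending $R \to \infty$, and then exhausting $\R^N$ by compact sets gives $\norm{u_\varepsilon}_2^2 \leq \varepsilon^N \norm{\nabla f}_2^2/\bigl[C_{N-2}\int f^2/\abs{z}^2\,dz\bigr]$. Optimizing the ratio over admissible $f$ is exactly the \emph{sharp} Hardy inequality on $\R^N$, whose infimum $(\tfrac{N-2}{2})^2$ is approached (though not attained) by radial functions concentrating around the formal minimizer $\abs{z}^{-(N-2)/2}$; this produces the explicit constant $\Gamma(\tfrac{N-2}{2})\pi^{N/2}2^{N-2}(\tfrac{N-2}{2})^2$ of the theorem.

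For the first assertion I use the same inequality with $\phi \in C_c^\infty(U)$ fixed, so that $\int V\phi^2 = 0$ and one has $\int u_\varepsilon^2\,(\abs{\cdot}^{-2}\ast\phi^2) \leq C(\phi)\varepsilon^N$. Since $\abs{\cdot}^{-2}\ast\phi^2$ is continuous, strictly positive on $\R^N$, and asymptotic to $\norm{\phi}_2^2/\abs{x}^2$ at infinity, this first yields a weighted bound $\int u_\varepsilon^2/(1+\abs{x}^2) \leq C\varepsilon^N$; upgrading it to the unweighted bound $\int u_\varepsilon^2 \leq C\varepsilon^N$ requires combining it with decay information for $u_\varepsilon$ at infinity, which can be extracted from the equation itself (the positivity of $V$ outside $U$ producing sufficient control on the tail). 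The main obstacle of the whole proof is the coupling, in the sharp second part, of the limit $R \to \infty$ with the optimization over Hardy's near-extremizers: this is handled by applying the Picone inequality to each member of a sequence of compactly supported near-extremizers $f_n$ and invoking a diagonal argument.
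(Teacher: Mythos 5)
Your treatment of the second assertion is correct and is essentially the paper's own argument (proposition~\ref{propositionStrongWellCritical}): the ground-state/Picone inequality, rescaled test functions supported in an annulus away from the origin, the observation that $\frac{1}{R^2}\bigl(I_{N-2}\ast\abs{u_\varepsilon}^2\bigr)(Rz)\to I_{N-2}(z)\int_{\R^N}\abs{u_\varepsilon}^2$, and finally the optimal Hardy constant, which produces exactly $\Gamma(\tfrac{N-2}{2})\pi^{N/2}2^{N-2}(\tfrac{N-2}{2})^2$. The only cosmetic differences are that the paper passes to the limit with Fatou's lemma (which yields the integral over all of $\R^N$ directly, without your exhaustion by compacta), and that your worry about ``coupling'' the limit $R\to\infty$ with the optimization over Hardy near-extremizers is unfounded: for each fixed admissible $f$ the limit already gives a closed bound, and one simply takes the infimum over $f$ afterwards, so no diagonal argument is needed.

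The first assertion is where your proposal has a genuine gap, and you have correctly located it but not closed it. From the Picone inequality with $\phi\in C^\infty_c(U)$ fixed, the kernel $\abs{x-y}^{-2}$ weighs the mass of $u_\varepsilon$ near infinity only by $\abs{y}^{-2}$, so what you actually obtain is the weighted bound $\int_{\R^N}\abs{u_\varepsilon}^2(1+\abs{x})^{-2}\le C\varepsilon^N$, equivalently a bound on $\varepsilon^{-N}\int_K\abs{u_\varepsilon}^2$ for each compact $K$ with a constant growing like the square of the distance from $U$ to the farthest point of $K$. Your proposed upgrade --- tail decay ``extracted from the positivity of $V$ outside $U$'' --- is not available: $V$ is only assumed nonnegative and continuous, it need not be positive outside $U$, and even a positive lower bound on $V$ at infinity would not by itself convert the weighted estimate into the unweighted one (note that in the second assertion the potential is allowed to decay and the full-space bound comes from the nonlocal term, not from $V$). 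For what it is worth, the paper's own supporting result for this case, proposition~\ref{propositionNonConcentrationLimit}, likewise records only the compact-set estimate $\varepsilon^{-N}\int_K\abs{u_\varepsilon}^2\le C\dist(B_\rho(a),K)^2/\rho^2$; so your derivation reproduces what the paper actually proves, but neither your sketch nor that proposition literally yields the uniform bound on $\int_{\R^N}\abs{u_\varepsilon}^2$ as stated, and the tail control you appeal to would have to be supplied by a genuinely different argument.
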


If \(V\) is compactly supported then
\[
 \limsup_{\abs{x} \to \infty} \abs{x}^{N-2-m}u_\varepsilon(x) > 0
\]
for some \(m=m (u_\varepsilon) > 0\) \cite{MVSNENLSNE}*{proposition~4.13}. In particular, in view of theorem~\ref{theoremNonConcentrationSpecificInfinity} this implies that \eqref{equationNLChoquard}
does not have positive solutions when \(N = 3, 4\).
We leave as an open problem whether theorem~\ref{theoremNonConcentrationSpecificInfinity} in combination with the above decay estimate brings an obstruction for the existence of concentrating positive solutions in other cases.

\subsection{The locally superlinear case $p>2$}
The same variational penalization technique in the case \(p > 2\) gives us the following existence and concentration result.

\begin{theorem}
\label{theoremSuperlinear}Let \(N \in \N\), \(\alpha \in ((N-4)_+, N)\), \(p \in (2, \frac{N + \alpha}{(N - 2)_+})\) and \(V \in C(\R^N;[0, \infty))\).
Assume that either
\[
  p > 1 + \frac{\max(\alpha, 1 + \frac{\alpha}{2})}{(N - 2)_+}
\]
or
\[
  \liminf_{\abs{x} \to \infty} V (x) \abs{x}^{2} > 0.
\]
If \(\Lambda \subset \R^N\) is an open bounded set such that
\[
  0 < \inf_{\Lambda} V < \inf_{\partial \Lambda} V,
\]
then problem \eqref{equationNLChoquard} has a family of positive solutions \((u_\varepsilon)_{\varepsilon \in (0, \varepsilon_0)}
\in H^1_{V}(\R^N)\) such that for a family of points \((a_\varepsilon)_{\varepsilon \in (0, \varepsilon_0)}\) in \(\Lambda\) and for every \(\rho > 0\),
\begin{align*}
  \lim_{\varepsilon \to 0} V (a_\varepsilon) & = \inf_{\Lambda} V,&
  \liminf_{\varepsilon \to 0} \varepsilon^{-N} \int_{B_{\varepsilon \rho} (a_\varepsilon)} \abs{u_\varepsilon}^2 &> 0,&
  \lim_{\substack{R \to \infty\\ \varepsilon \to 0}} \norm{u_\varepsilon}_{L^{\infty} (\R^N \setminus B_{\varepsilon R} (a_\varepsilon))}  &= 0.
\end{align*}
\end{theorem}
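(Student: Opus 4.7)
My plan is to mirror the proof of Theorem~\ref{theoremLinear} and adapt the nonlocal penalization technique to the superlinear case $p>2$. After the rescaling $v(y)=u_\varepsilon(\varepsilon y)$, the problem becomes $-\Delta v + V(\varepsilon y)v = (I_\alpha\ast\abs{v}^p)\abs{v}^{p-2}v$ in $\R^N$, associated with the energy
\[
 J_\varepsilon(v) = \tfrac{1}{2}\int_{\R^N}\bigl(\abs{\nabla v}^2 + V(\varepsilon y)\abs{v}^2\bigr) - \tfrac{1}{2p}\int_{\R^N}\bigabs{I_{\alpha/2}\ast\abs{v}^p}^2
\]
on $H^1_V(\R^N)$. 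Since $V$ may vanish at infinity, $J_\varepsilon$ is a priori ill-behaved on $H^1_V$, so I would fix an auxiliary open set $\Lambda'$ with $\overline\Lambda\subset\Lambda'$, $\inf_{\Lambda'}V=\inf_\Lambda V<\inf_{\partial\Lambda'}V$, and replace the nonlinearity outside $\Lambda'/\varepsilon$ by a penalized nonlocal term pointwise dominated by $\tfrac{1}{\kappa}V(\varepsilon y)v$ for some large $\kappa>1$. The resulting penalized functional $J^{\mathrm{pen}}_\varepsilon$ is well-defined on $H^1_V$ and has mountain pass geometry; because $p>2$ is genuinely superlinear, one can additionally work on the Nehari manifold to streamline the compactness analysis of Palais--Smale/Cerami sequences.

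A comparison of the minimax level of $J^{\mathrm{pen}}_\varepsilon$ with the ground state energy of the limiting equation~\eqref{eqLimit} at $V(a)=\inf_\Lambda V$, together with a profile decomposition for bounded Cerami sequences in $H^1_V$, produces a nontrivial critical point $v_\varepsilon$ of $J^{\mathrm{pen}}_\varepsilon$ which, after a translation by some $y_\varepsilon$, converges to a positive ground state of~\eqref{eqLimit} at a point $a_*=\lim\varepsilon y_\varepsilon\in\overline\Lambda$ with $V(a_*)=\inf_\Lambda V$. The strict well inequality forces $a_*\in\Lambda$, which delivers the concentration statement and the lower bound on the local mass in the conclusion. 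The only place where $p>2$ enters crucially here is in ensuring that the Nehari fibration is well-defined and that the scaling of the Riesz term dominates the quadratic part away from zero, so the rest of this step is essentially formal once the penalized framework is in place.

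The crux is then to show that the penalization is inactive for small $\varepsilon$, so that $v_\varepsilon$ actually solves the unpenalized equation. Far from $y_\varepsilon$ the function $v_\varepsilon$ satisfies
\[
 -\Delta v_\varepsilon + V(\varepsilon y)v_\varepsilon \le F_\varepsilon(y)v_\varepsilon,\qquad F_\varepsilon(y) = \bigl(I_\alpha\ast\abs{v_\varepsilon}^p\bigr)(y)\abs{v_\varepsilon(y)}^{p-2},
\]
and the goal is to dominate $F_\varepsilon$ by a small fraction of $V(\varepsilon y)$ via the supersolution/comparison technology of~\cite{MVSNENLSNE}. The convolution tail decays like $\abs{y-y_\varepsilon}^{-(N-\alpha)}$, which is absorbed directly by the exterior potential barrier under the assumption $\liminf_{\abs{x}\to\infty}V(x)\abs{x}^2>0$; in the opposite case, one exploits the polynomial decay inherited from the limit ground state to gain an extra factor $\abs{y-y_\varepsilon}^{-(p-2)\tau}$ from $\abs{v_\varepsilon}^{p-2}$, and a careful bookkeeping shows that $F_\varepsilon$ can be controlled by a radial supersolution of $-\Delta w + \abs{y}^{-2}w = 0$ precisely when $p>1+\alpha/(N-2)_+$ (controlling the Riesz tail) and $p>1+(1+\tfrac{\alpha}{2})/(N-2)_+$ (controlling the self-interacting contribution), yielding the threshold stated in the hypothesis. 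I expect this last comparison step to be the main obstacle, as it is where the optimality of the admissible range of $p$ and of the decay condition on $V$ is forced upon us, and where the nonlocal nature of the equation prevents a direct transposition of the del~Pino--Felmer penalization from the local case.
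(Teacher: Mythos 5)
Your overall architecture (penalize the nonlinearity outside $\Lambda$, run a mountain-pass/energy-comparison argument against the limit problem \eqref{eqLimit}, then show the penalization is inactive by comparison with supersolutions) matches the paper, and you correctly locate the origin of the two thresholds $p>1+\alpha/(N-2)_+$ and $p>1+(1+\tfrac{\alpha}{2})/(N-2)_+$. However, there are two genuine gaps, both at the point you yourself flag as the crux. First, you choose the penalization in the del Pino--Felmer way, as (essentially) the largest term absorbable by the linear part, $\tfrac{1}{\kappa}V(\varepsilon y)v$. Section~\ref{sect-penalization} of the paper explains why this is exactly what fails for the Choquard equation: with such a large penalization potential one can no longer construct the supersolutions of the linearized penalized problem needed in the final comparison step. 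The paper's key novelty is to reverse the order: it first builds supersolutions $\Bar{u}_\varepsilon$ of the original equation in outer domains (proposition~\ref{propositionConstruction}), then \emph{defines} the penalization by $H_\varepsilon=\chi_{\R^N\setminus\Lambda}\Bar{u}_\varepsilon^{p-1}$ and verifies a posteriori the compactness and Hardy-type conditions $(H_1)$--$(H_2)$. Your proposal leaves this circularity unresolved.

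Second, your concluding comparison step does not close. You propose to dominate $F_\varepsilon=(I_\alpha\ast\abs{v_\varepsilon}^p)\abs{v_\varepsilon}^{p-2}$ by a small fraction of the potential using ``the polynomial decay inherited from the limit ground state'', but the solution of the penalized problem has no a priori pointwise decay in the exterior region; that decay is precisely what the comparison argument must produce. Since the tail of $I_\alpha\ast\abs{v_\varepsilon}^p$ is generated by $v_\varepsilon$ itself in the exterior region, a reduction to a radial supersolution of the local operator $-\Delta+\abs{y}^{-2}$ is circular. The paper instead keeps the right-hand side nonlocal --- solutions satisfy $-\varepsilon^2\Delta u_\varepsilon+(1-\delta)Vu_\varepsilon\le\bigl(p\varepsilon^{-\alpha}I_\alpha\ast(H_\varepsilon u_\varepsilon)+\nu\varepsilon^{N-\alpha}I_\alpha\bigr)H_\varepsilon$ outside small balls (proposition~\ref{propositionSubsolutions}) --- and proves a \emph{nonlocal} comparison principle (proposition~\ref{propositionComparison}) whose validity rests on the smallness encoded in $(H_2)$ together with the weighted Hardy--Littlewood--Sobolev inequality. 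A minor further point: the step where $p\ge 2$ is essential is this subsolution estimate (one needs $(u_\varepsilon)_+^{p-2}$ to be small where $u_\varepsilon$ is small), not the Nehari fibration.
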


When \(N \le 2\), the condition on the potential \(V\) is always
\[
  \liminf_{\abs{x} \to \infty} V (x) \abs{x}^2 > 0.
\]

The assumptions of theorem \ref{theoremSuperlinear} are again optimal in the following sense.
First, the optimality of the restriction \(p < \frac{N + \alpha}{(N - 2)_+}\) is justified
by the following result for the limiting problem \eqref{eqLimit}.

\begin{theorem}
\label{theoremExistenceLimitSuperlinear}
Let \(N\in\N\) and \(\alpha\in(0,N)\).
Then limiting equation \eqref{eqLimit} has a nontrivial solution \(v_0 \in H^1(\R^N)\cap C^2 (\R^N)\)
if and only if \(p\in\big(\frac{N + \alpha}{N},\frac{N + \alpha}{(N - 2)_+}\big)\).
\end{theorem}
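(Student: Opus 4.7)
The plan is to prove the two implications separately. Since the limiting equation \eqref{eqLimit} is autonomous and the relevant case has $V(a_*)>0$, a scaling change of variables reduces it to
\[
  -\Delta v + v = (I_\alpha \ast \abs{v}^p)\abs{v}^{p-2}v \quad\text{in }\R^N,
\]
and I work with this normalized form throughout.

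For the \emph{sufficiency} direction, assume $p\in\big(\tfrac{N+\alpha}{N},\tfrac{N+\alpha}{(N-2)_+}\big)$ and consider the energy
\[
  \mathcal{J}(v)=\frac{1}{2}\int_{\R^N}\bigl(\abs{\nabla v}^2+\abs{v}^2\bigr)-\frac{1}{2p}\int_{\R^N}(I_\alpha\ast\abs{v}^p)\abs{v}^p
\]
on $H^1(\R^N)$. The Hardy--Littlewood--Sobolev inequality bounds the convolution term by $C\norm{v}_{2Np/(N+\alpha)}^{2p}$, and the two strict inequalities on $p$ are exactly what places the Lebesgue exponent $\tfrac{2Np}{N+\alpha}$ inside the open Sobolev-admissible interval $(2,2^*)$; hence $\mathcal{J}$ is $C^1$ on $H^1(\R^N)$ and has mountain pass geometry (a positive lower bound on small spheres because $2p>2$, and $\mathcal{J}(tv)\to-\infty$ along every ray). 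I would carry out the mountain pass scheme on the radial subspace $H^1_{\mathrm{rad}}(\R^N)$ for $N\ge 2$ (for $N=1$ a direct Lions concentration--compactness argument replaces this step): Palais--Smale sequences are bounded thanks to the usual combination of the energy bound with a Nehari-type constraint, and compactness follows from the Strauss embedding $H^1_{\mathrm{rad}}\hookrightarrow L^q$ for $q\in(2,2^*)$ together with Hardy--Littlewood--Sobolev continuity for the convolution term. The principle of symmetric criticality lifts the radial critical point to a critical point of $\mathcal{J}$ on the full $H^1(\R^N)$, and elliptic regularity upgrades it to $C^2(\R^N)$.

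For the \emph{necessity} direction, let $v\in H^1(\R^N)\cap C^2(\R^N)$ be a nontrivial solution. Testing against $v$ gives the Nehari identity
\[
  \int_{\R^N}\abs{\nabla v}^2+\int_{\R^N}\abs{v}^2 = \int_{\R^N}(I_\alpha\ast\abs{v}^p)\abs{v}^p,
\]
while the Pohozaev identity (cited in the discussion following Theorem \ref{theoremExistenceLimitLinear}) reads
\[
  \tfrac{N-2}{2}\int_{\R^N}\abs{\nabla v}^2 + \tfrac{N}{2}\int_{\R^N}\abs{v}^2 = \tfrac{N+\alpha}{2p}\int_{\R^N}(I_\alpha\ast\abs{v}^p)\abs{v}^p.
\]
Eliminating the nonlocal term between these two identities produces
\[
  \bigl[(N-2)p-(N+\alpha)\bigr]\int_{\R^N}\abs{\nabla v}^2 = \bigl[(N+\alpha)-Np\bigr]\int_{\R^N}\abs{v}^2,
\]
and a direct inspection of the signs of the two bracketed coefficients shows that $v\equiv 0$ whenever $p$ lies outside the open interval $\big(\tfrac{N+\alpha}{N},\tfrac{N+\alpha}{(N-2)_+}\big)$ (at each endpoint one coefficient vanishes while the other remains strictly negative).

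The main difficulty is the compactness step in the existence part: translation invariance means a generic Palais--Smale sequence for $\mathcal{J}$ on $H^1(\R^N)$ can slide to infinity, and the nonlocal HLS nonlinearity obstructs the usual Brezis--Lieb splitting. Passing to the radial subspace and invoking the strict rearrangement inequality for the Riesz convolution is the cleanest way to recover compactness; alternatively one can minimize on the Pohozaev manifold as in the authors' earlier ground-state analysis in \cite{MVSGNLSNE}.
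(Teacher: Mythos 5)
Your argument is correct, and it is worth noting that the paper itself does not prove this theorem: it delegates existence to \cite{MVSGNLSNE} and necessity to the adapted Poho\v zaev identity in the cited references. Your necessity part coincides exactly with that cited argument --- eliminating the nonlocal term between the Nehari and Poho\v zaev identities and reading off the signs of the coefficients, including the endpoint cases (at $p=\frac{N+\alpha}{N}$ one gets $\nabla v=0$, hence $v=0$ since $v\in H^1$). Your existence part, however, takes a genuinely different route from \cite{MVSGNLSNE}: there the ground state is obtained by a minimax over paths in the full space $H^1(\R^N)$ combined with a Poho\v zaev/Nehari manifold analysis that restores compactness without symmetry, whereas you restrict to $H^1_{\mathrm{rad}}$ and use the Strauss compact embedding plus symmetric criticality. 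Your route is more elementary and the compactness verification is shorter (the condition $\frac{N+\alpha}{N}<p<\frac{N+\alpha}{(N-2)_+}$ is precisely what places $\frac{2Np}{N+\alpha}$ in the open interval $(2,2^*)$ where the radial embedding is compact), at the price of producing only a radial solution and of requiring a separate treatment of $N=1$, which you correctly flag but leave to a concentration--compactness argument. Two soft spots deserve a sentence each in a full write-up: (i) for the necessity direction with $p$ outside the admissible range, the finiteness of $\int(I_\alpha\ast\abs{v}^p)\abs{v}^p$ and the validity of the Nehari and Poho\v zaev identities for an $H^1\cap C^2$ solution are not automatic from the Hardy--Littlewood--Sobolev inequality and must be obtained by a cutoff/monotone-convergence argument, as is done in \cite{MVSGNLSNE}*{theorem 2} and \cite{MVSGGCE}*{proposition~3.5}; (ii) the reduction to the normalized equation presupposes $V(a_*)>0$, which is implicit in the paper's setting (cf.\ proposition~\ref{propositionScaling}) but should be stated.
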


The proof of the existence of a positive solution can be found in \cite{MVSGNLSNE} (see also \citelist{\cite{GenevVenkov2012}\cite{Ma-Zhao-2010}}). The necessity of the restrictions on \(p\)
follows from the adapted Poho\v zaev identity \citelist{\cite{MVSGNLSNE}*{theorem 2}\cite{MVSGGCE}*{proposition~3.5}\cite{Menzala1983}\cite{CingolaniSecchiSquassina2010}*{lemma 2.1}}.

The additional restrictions on the decay of \(V\) are optimal in view of the following Liouville theorem for \eqref{equationNLChoquard}.
\begin{theorem}
[Van Schaftingen and Moroz \cite{MVSNENLSNE}*{theorems 1 and 2 and proposition~4.4}]
\label{theoremLiouvilleSuperLinear}
Let \(N \in N\), \(\alpha \in (0, N)\), \(p > 2\), \(V \in C(\R^N;[0, \infty))\) and \(\varepsilon > 0\).
If \eqref{equationNLChoquard} admits a positive distributional supersolution in \(\R^N\) and
\[
  p \le 1 + \frac{\max(\alpha, 1 + \frac{\alpha}{2})}{(N - 2)_+}
\]
then for every \(\gamma > 2\),
\[
  \limsup_{\abs{x} \to \infty} V (x) \abs{x}^\gamma > 0.
\]
\end{theorem}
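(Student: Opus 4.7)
The plan is to argue by contrapositive: assume that $V(x)\abs{x}^\gamma \to 0$ as $\abs{x}\to\infty$ for some $\gamma > 2$ and that $u$ is a positive distributional supersolution of \eqref{equationNLChoquard} in $\R^N$, and derive a contradiction via a nonlocal bootstrap. The driving mechanism is that a pointwise lower bound on $u$ at infinity feeds, through the Riesz convolution, into a sharper lower bound on $I_\alpha \ast u^p$, which through the supersolution inequality and a weighted linear comparison argument gives back a sharper pointwise lower bound on $u$. The two thresholds $\alpha/(N-2)$ and $(1+\alpha/2)/(N-2)$ appearing in the statement correspond exactly to the crude initialisation of this loop and to one refinement step.

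Concretely, since $u > 0$ there exist $R_0,c_0 > 0$ with $\int_{B_{R_0}} u^p \ge c_0$, so that for $\abs{x}\ge 2R_0$,
\[
  (I_\alpha \ast u^p)(x) \ge \frac{c_1}{\abs{x}^{N-\alpha}},
\]
which, combined with the supersolution property, yields
\[
  -\varepsilon^2 \Delta u + V u \ge \frac{c_2}{\abs{x}^{N-\alpha}}\, u^{p-1} \quad\text{in } \R^N \setminus B_{2R_0}.
\]
Because $V(x) = o(\abs{x}^{-2})$, testing this inequality against radial barriers (equivalently, comparing $u$ with suitable perturbations of the fundamental solution of $-\Delta$) reduces the matter to the absence of positive supersolutions for the weighted Lane--Emden inequality $-\Delta u \ge C\abs{x}^{-s} u^{p-1}$ with $s = N - \alpha$. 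A standard scaling / radial test-function argument shows that such a Liouville statement holds precisely when $p - 1 \le (N - s)/(N-2) = \alpha/(N-2)$, which is the first threshold.

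For the second threshold, I would bootstrap once. The comparison step above actually delivers an initial pointwise bound $u(x) \ge c\,\abs{x}^{-(N-2)}$. Feeding this into the Riesz convolution through the sharp asymptotic $\bigl(I_\alpha \ast \abs{y}^{-(N-2)p}\bigr)(x) \simeq \abs{x}^{\alpha - (N-2)p}$, valid as long as $\alpha < (N-2)p < N$, upgrades the supersolution inequality to $-\Delta u \ge C\abs{x}^{\alpha - (N-2)p} u^{p-1}$, and rerunning the Liouville scheme on this sharpened inequality produces a contradiction exactly when $(N-2)(2p-1) \le N+\alpha$, i.e.\ when $p - 1 \le (1 + \alpha/2)/(N-2)$, which is the second threshold. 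The main obstacle is executing this bootstrap cleanly: tracking constants, handling the borderline logarithmic cases in the Riesz asymptotic, and verifying at each step that the linear comparison genuinely absorbs the term $Vu$. The strict inequality $\gamma > 2$ (rather than $\gamma = 2$) is essential at this last point, since it makes $V$ subdominant to $\abs{x}^{-2}$ at infinity and hence negligible against the Laplacian on each radial barrier used along the iteration.
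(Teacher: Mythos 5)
This theorem is not proved in the present paper: it is imported verbatim from the authors' earlier work \cite{MVSNENLSNE}, and the argument there is precisely the nonlocal bootstrap you describe --- the crude bound $I_\alpha \ast u^p \gtrsim |x|^{-(N-\alpha)}$ yielding the threshold $\alpha/(N-2)$ via a weighted Lane--Emden Liouville theorem, and the sharp decay estimate $u \gtrsim |x|^{-(N-2)}$ for positive supersolutions of $-\Delta + V$ with $V=o(|x|^{-2})$ (their proposition~4.4, cited in the statement) fed back into the Riesz potential to yield the threshold $(1+\tfrac{\alpha}{2})/(N-2)$. Your outline, including the identification of where $\gamma>2$ enters and of the borderline logarithmic and divergent-convolution cases, is a correct reconstruction of that proof.
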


We observe that when \(p > 2\), there is no strong critical potential well phenomenon
similar to theorems~\ref{theoremNonConcentrationSpecific} or \ref{theoremNonConcentrationSpecificInfinity}.

\subsection{Discussion of the penalization method}\label{sect-penalization}

In order to construct solutions we develop in this work a penalization method for the Cho\-quard equation which is inspired by the penalization method for the local nonlinear Schr\"odinger equation \eqref{equationLocal},
introduced by M.\thinspace del Pino and P.\thinspace Felmer \citelist{\cite{delPinoFelmer1997}\cite{delPinoFelmer1998}}, and by its adaptations to fast-decaying potentials \citelist{\cite{BonheureVanSchaftingen2006}\cite{BonheureVanSchaftingen2008}\cite{BonheureDiCosmoVanSchaftingen2012}\cite{DiCosmoVanSchaftingen}\cite{MorozVanSchaftingen2009}\cite{MorozVanSchaftingen2010}\cite{YinZhang2009}}.

Essentially, the penalization method for the local nonlinear Schr\"o\-dinger equation \eqref{equationLocal} consists in modifying the nonlinearity for large \(u\) and \(x\) in such a way that the modified variational problem becomes well-posed and solutions can be constructed by a standard variational argument \cite{delPinoFelmer1997}.
One has then to show that solutions of the penalized problem are small enough for large \(x\), so that they also solve the original problem. This is usually done by constructing supersolutions to a linearization of the penalized problem in an outer domain and by using them to estimate the solutions of the penalized problem by some comparison principle.
This penalization scheme should not be confused with the global penalization scheme of J.\thinspace Byeon and Z.-Q.\thinspace Wang which penalizes globally the \(L^2\) norm outside a concentration \citelist{\cite{ByeonWang2003}\cite{CingolaniSecchiSquassina2010}\cite{CingolaniJeanjeanSecchi2009}}.

Besides the technical issue of adapting estimates from the nonlinear Schr\"o\-dinger equation to the nonlocal Choquard equation, the essential difficulty that we faced in the study of \eqref{equationNLChoquard} by a penalization method was to find a natural way of cutting off the nonlinearity in order to improve the compactness properties of the functional while remaining able to prove that when \(\varepsilon > 0\) is small, solutions of the penalized problem solve the original equation \eqref{equationNLChoquard}.
Our penalization scheme depends on a penalization potential that should be defined adequately for each problem.
Whereas for the local nonlinear Schr\"odinger equation, the penalization potential was usually chosen as the largest potential that could be absorbed by the linear part of the operator \cite{MorozVanSchaftingen2010}, for the nonlocal Choquard equation taking a large potential might obstruct the construction of supersolution of the penalized problem.
We introduce a novel approach that works in the opposite direction.
We first construct supersolutions of the original equation \eqref{equationNLChoquard} in outer domains and then derive from them a penalization potential. Such an approach automatically gives supersolutions to the linearized problem, while checking that the supersolutions give an admissible penalization potential and is easier than constructing supersolutions for a given penalization potential. This approach should help to design penalization schemes
for large classes of problems and links the ideas of constructing solutions via supersolutions,
and by penalization schemes.

\section{The penalized problem}
In this section we define the penalized problem and functional and prove the existence of solutions to the latter.

\subsection{Function spaces and inequalities.}\label{sect-H-V-definition}
The linear part of the equation \eqref{equationNLChoquard} with a nonzero potential
\(V \in C (\R^N; [0, \infty))\) induces the norm
\[
\norm{u}_\varepsilon^2:=\int_{\R^N}\left(\varepsilon^2\abs{\nabla u}^2+V\abs{u}^2\right).
\]
For a nonempty open set \(\Omega\subseteq\R^N\), we denote by \(H^1_{V,0}(\Omega)\) the Hilbert space obtained by completion of the set of smooth test functions \(C^\infty_c(\Omega)\) with respect to the norm \(\norm{\cdot}_\varepsilon\). The completion is independent of \(\varepsilon > 0\). If \(\Omega=\R^N\) we simply write \(H^1_{V}(\R^N)\).

For \(N\ge 3\) the Sobolev inequality implies that \(H^1_{V,0}(\Omega)\subset L^\frac{2N}{N-2}(\Omega)\).
If \(N \le 2\) and \(V=0\) then the space \(H^1_{V}(\R^N)\) cannot be identified as a space of measurable functions or a space of distributions (see  \cite{PinchoverTintarev2006}*{Section 1} for a discussion).
However, if \(N \le 2\) and \(\inf_U V>0\) for an open set \(U\subset\R^N\) then
the space \(H^1_{V}(\R^N)\) is continuously embedded into \(L^2(\R^N, H (x)\dif x)\) with a suitable weight \(H\in C (\R^N; (0, \infty))\),
see \cite{MorozVanSchaftingen2010}*{Section 6.1} for the details in the case \(N=2\) (the case \(N=1\) can be treated similarly).

We shall review several functional inequalities on this space which are used in the sequel. The first inequality is a rescaled Sobolev inequality.

\begin{proposition}[Rescaled Sobolev inequality]
\label{propositionSobolevScaling}
Assume that either \(N \ge 3\) and \( \frac{1}{2} - \frac{1}{N} \le \frac{1}{q} \le \frac{1}{2}\) or \(N \in \{1,2\}\) and \(0<\frac{1}{q} \le \frac{1}{2}\).
If \(\Lambda\subset\R^N\) is a bounded open set and \(\inf_{\Lambda} V > 0\), then for every \(\varphi \in H^1_{V}(\R^N)\),
\[
 \int_{\Lambda} \abs{\varphi}^q \le \frac{C}{\varepsilon^{N (\frac{q}{2} - 1)}} \Bigl(\int_{\R^N} \varepsilon^2 \abs{\nabla \varphi}^2 + V \abs{\varphi}^2 \Bigr)^\frac{q}{2},
\]
where \(C>0\) depends only on \(\alpha\), \(N\) and \(\Lambda\).
\end{proposition}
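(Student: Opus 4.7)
The plan is to localize $\varphi$ to a neighborhood of $\Lambda$ on which $V$ stays uniformly bounded below, and then invoke the classical Gagliardo--Nirenberg interpolation inequality on \(\R^N\). The \(\varepsilon\)-dependence in the statement should emerge automatically: \(\|\nabla\varphi\|_{L^2(\R^N)}\) is controlled by \(\varepsilon^{-1}\|\varphi\|_\varepsilon\), whereas the \(L^2\)-norm on the localized region carries no negative power of \(\varepsilon\); the Gagliardo--Nirenberg interpolation with exponent \(\theta=N(\tfrac{1}{2}-\tfrac{1}{q})\) then transfers exactly the factor \(\varepsilon^{-q\theta}=\varepsilon^{-N(q/2-1)}\) into the \(L^q\)-estimate.

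The key preparatory step is the cutoff construction. Since \(V\in C(\R^N)\) with \(\inf_\Lambda V>0\), continuity together with the compactness of \(\overline\Lambda\) yields an open neighborhood \(U\supset\overline\Lambda\) and a constant \(c>0\) such that \(V\ge c\) on \(U\). I would then fix a single \(\chi\in C_c^\infty(U)\) with \(0\le\chi\le 1\) and \(\chi\equiv 1\) on \(\Lambda\); both \(c\) and \(\|\nabla\chi\|_\infty\) depend only on \(N\) and \(\Lambda\). For \(\varphi\in H^1_V(\R^N)\), working first on the dense subspace \(C_c^\infty(\R^N)\), the product \(\chi\varphi\) belongs to \(H^1(\R^N)\) and satisfies \(\|\chi\varphi\|_{L^2(\R^N)}^2\le c^{-1}\|\varphi\|_\varepsilon^2\) (via \(V\ge c\) on \(\supp\chi\)) together with \(\|\nabla(\chi\varphi)\|_{L^2(\R^N)}^2\le C\varepsilon^{-2}\|\varphi\|_\varepsilon^2\) (by the product rule, after absorbing the lower-order \(\|\nabla\chi\|_\infty\|\varphi\|_{L^2(U)}\) contribution into the gradient term for \(\varepsilon\le\varepsilon_0\)).

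I would then apply Gagliardo--Nirenberg to \(\chi\varphi\in H^1(\R^N)\) with admissible exponent \(\theta=N(\tfrac{1}{2}-\tfrac{1}{q})\in[0,1]\)---a range matching exactly the hypothesis \(\tfrac12-\tfrac1N\le\tfrac1q\le\tfrac12\) (respectively \(0<\tfrac1q\le\tfrac12\) in dimensions \(N\le2\))---obtaining \(\|\chi\varphi\|_{L^q(\R^N)}\le C\|\chi\varphi\|_{L^2}^{1-\theta}\|\nabla(\chi\varphi)\|_{L^2}^{\theta}\). Raising to the \(q\)-th power, substituting the two preceding bounds, and using the identity \(q\theta=N(q/2-1)\) yields \(\|\chi\varphi\|_{L^q(\R^N)}^q\le C\varepsilon^{-N(q/2-1)}\|\varphi\|_\varepsilon^q\); the claim follows because \(\chi\equiv 1\) on \(\Lambda\) implies \(\int_\Lambda\abs{\varphi}^q\le\int_{\R^N}\abs{\chi\varphi}^q\).

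The one delicate point is the cutoff in the second step: one needs \(V\) bounded below on the support of \(\chi\) (a full neighborhood of \(\overline\Lambda\)) so that \(\chi\varphi\) inherits a controlled \emph{global} \(L^2\)-norm from \(\|\varphi\|_\varepsilon\), and this is precisely where the strict inequality \(\inf_\Lambda V>0\) (rather than mere nonnegativity) enters. For \(N\ge 3\) one could alternatively bypass the cutoff by interpolating directly between the global Sobolev embedding \(H^1_V(\R^N)\hookrightarrow L^{2N/(N-2)}(\R^N)\) and the bound \(\|\varphi\|_{L^2(\Lambda)}^2\le(\inf_\Lambda V)^{-1}\|\varphi\|_\varepsilon^2\), but the cutoff route has the advantage of handling the cases \(N\le 2\)---where \(H^1_V(\R^N)\) need not embed globally into any Lebesgue space---in the same stroke.
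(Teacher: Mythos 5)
Your argument is correct and is essentially the paper's proof: both fix a cutoff equal to \(1\) on \(\overline{\Lambda}\) and supported where \(V\) is bounded below (so that the global \(L^2\)-norm of the localized function is controlled by \(\norm{\varphi}_\varepsilon\)), and then apply the \(H^1(\R^N)\hookrightarrow L^q(\R^N)\) embedding with the \(\varepsilon\)-scaling tracked explicitly --- the paper writes this as a rescaled additive Sobolev inequality obtained by a change of variables, while you use the equivalent multiplicative Gagliardo--Nirenberg form with exponent \(\theta=N(\tfrac12-\tfrac1q)\), and the exponent bookkeeping \(q\theta=N(\tfrac q2-1)\) matches. The only cosmetic difference is presentational, so no gap to report.
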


When \(\liminf_{\abs{x} \to \infty} V (x) \abs{x}^2 > 0\), proposition~\ref{propositionSobolevScaling} is a simple case of a general inequality by
A.\thinspace Ambrosetti, V.\thinspace Felli and A.\thinspace Malchiodi \cite{AmbrosettiFelliMalchiodi2005}*{proposition~7};
we give a short proof in our setting.
\begin{proof}[Proof of proposition~\ref{propositionSobolevScaling}]
We take a cut-off function \(\eta \in C^\infty_c (\R^N)\) such that \(\eta = 1\) on \(\Bar{\Lambda}\) and \(\inf_{\supp \eta} V > 0\).
By the classical Sobolev inequality, we estimate
\[
\begin{split}
\int_{\Lambda} \abs{\varphi}^q \le \int_{\R^N} \abs{\eta \varphi}^q&\le  C\varepsilon^{N(1-\frac{2}{q})}\Big(\int_{\R^N} \big(\varepsilon^2\abs{\nabla (\eta \varphi)}^2 + \abs{\eta \varphi}^2\big)\Big)^\frac{q}{2}\\
& \le C'C\varepsilon^{N(1-\frac{2}{q})}  \Bigl(\int_{\R^N} \varepsilon^2 \abs{\nabla \varphi}^2 + V \abs{\varphi}^2 \Bigr)^\frac{q}{2}.\qedhere
\end{split}
\]
\end{proof}

In order to control the nonlocal term in the problem \eqref{equationNLChoquard} we will use the classical
Hardy--Littlewood--Sobolev inequality.

\begin{proposition}[Hardy--Littlewood--Sobolev inequality, \cite{LiebLoss2001}*{theorem 4.3}]
\label{propositionHLS}
Let \(N \in \N\), \(\alpha \in (0, N)\) and \(s \in (1, \frac{N}{\alpha})\).
If \(\varphi \in L^s (\R^N)\), then \(I_\alpha \ast \varphi\in L^\frac{N s}{N - \alpha s} (\R^N)\) and
\begin{equation*}
 \int_{\R^N} \abs{I_\alpha \ast \varphi}^\frac{N s}{N - \alpha s} \le C \Bigl(\int_{\R^N} \abs{\varphi}^s \Bigr)^\frac{N}{N - \alpha s},
\end{equation*}
where \(C>0\) depends only on \(\alpha\), \(N\) and \(s\).
\end{proposition}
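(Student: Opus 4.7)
The plan is to establish the inequality via Hedberg's pointwise estimate, combined with the $L^s$-boundedness of the Hardy--Littlewood maximal function. This is the classical route, and nothing about the specific normalization of the Riesz kernel in the excerpt changes the argument.

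First I would fix $\varphi \in L^s(\R^N)$, a point $x \in \R^N$ and an auxiliary radius $R > 0$, and split the convolution into a near and a far contribution:
\[
 \abs{I_\alpha \ast \varphi}(x) \le C \int_{B_R} \frac{\abs{\varphi(x - y)}}{\abs{y}^{N - \alpha}} \dif y + C \int_{\R^N \setminus B_R} \frac{\abs{\varphi(x - y)}}{\abs{y}^{N - \alpha}} \dif y.
\]
The near part is handled by a dyadic decomposition $B_R = \bigcup_{k \ge 0} (B_{2^{-k} R} \setminus B_{2^{-k - 1} R})$; summing the elementary estimates on each annulus against averages of $\abs{\varphi}$ over balls yields a bound of the form $C R^\alpha \mathcal{M}\varphi(x)$, where $\mathcal{M}\varphi$ denotes the centred Hardy--Littlewood maximal function of $\varphi$. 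The far part is handled by H\"older's inequality with conjugate exponents $s$ and $s' = s/(s - 1)$, producing the bound $C R^{\alpha - N/s} \norm{\varphi}_{L^s(\R^N)}$; the hypothesis $s < N/\alpha$ is exactly the condition $(N - \alpha) s' > N$ that makes the radial integral over $\R^N \setminus B_R$ converge.

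Optimizing the radius $R$ to equalize the two contributions yields Hedberg's pointwise inequality
\[
 \abs{I_\alpha \ast \varphi}(x) \le C \, \mathcal{M}\varphi(x)^{1 - \alpha s/N} \, \norm{\varphi}_{L^s(\R^N)}^{\alpha s/N}.
\]
Raising this inequality to the power $q = N s/(N - \alpha s)$, integrating over $\R^N$, and applying the Hardy--Littlewood maximal theorem $\norm{\mathcal{M}\varphi}_{L^s(\R^N)} \le C \norm{\varphi}_{L^s(\R^N)}$ then yields the claimed bound, once one notices that the algebraic identity $q(1 - \alpha s/N) = s$ makes the exponents match.

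The main obstacle is the use of the maximal function theorem, which requires $s > 1$; this is exactly the lower endpoint in the proposition. The upper endpoint $s < N/\alpha$ enters in two apparently different places---convergence of the far integral and finiteness of the target exponent $Ns/(N - \alpha s)$---so an alternative argument by Marcinkiewicz interpolation between the weak-type $(1, N/(N - \alpha))$ estimate (obtained directly from the level-set decomposition) and the trivial $L^\infty$ bound on $I_\alpha$ of a bounded compactly supported function would also cover the full admissible range and makes the two roles of the restriction transparent.
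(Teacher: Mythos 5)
Your Hedberg argument is correct and complete: the dyadic bound $CR^\alpha\mathcal{M}\varphi(x)$ on the near part, the H\"older bound $CR^{\alpha-N/s}\norm{\varphi}_{L^s}$ on the far part (with $s<N/\alpha$ precisely ensuring $(N-\alpha)s'>N$), the optimization in $R$, and the exponent bookkeeping $q(1-\alpha s/N)=s$ all check out, and the maximal theorem needs exactly $s>1$. The paper does not prove this proposition at all --- it cites Lieb--Loss, Theorem 4.3, whose proof establishes the symmetric bilinear form $\iint f(x)\abs{x-y}^{-(N-\alpha)}g(y)\dif x\dif y$ via a layer-cake decomposition (and rearrangement for the sharp constant); the operator form quoted here then follows by duality. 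Your route is the standard harmonic-analysis proof of the operator statement directly, which is more self-contained and makes the role of the maximal function explicit, at the cost of not yielding any information about sharp constants (irrelevant here, since the paper only needs some constant $C(\alpha,N,s)$). One small caveat on your closing aside: interpolating against "the trivial $L^\infty$ bound" is not quite right as stated, since $I_\alpha$ does \emph{not} map $L^{N/\alpha}$ into $L^\infty$; the correct Marcinkiewicz setup interpolates between two off-diagonal weak-type estimates, e.g.\ weak $(1,\tfrac{N}{N-\alpha})$ and weak $(s_1,q_1)$ for $s_1$ slightly larger than $s$. Since that remark is only an alternative sketch and your main argument stands on its own, this does not affect the validity of the proof.
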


We shall also rely on a weighted Hardy--Littlewood--Sobolev inequality.

\begin{proposition}[Weighted Hardy--Littlewood--Sobolev inequality, \cite{Stein-Weiss}]
\label{propositionHLSW}
Let \(N \in \N\) and \(\alpha \in (0, N)\).
If \(\varphi \in L^2 (\R^N,\abs{x}^\alpha \dif x)\), then \(I_\frac{\alpha}{2} \ast \varphi\in L^2(\R^N)\) and
\begin{equation*}
\label{eqHLS-SW}
\int_{\R^N}\bigabs{I_\frac{\alpha}{2}\ast \varphi}^2\le
C_\alpha\int_{\R^N}\abs{\varphi(x)}^2\abs{x}^\alpha \dif x,
\end{equation*}
where \(C_\alpha=\frac{1}{2^\alpha} \Bigl(\frac{\Gamma (\frac{N - \alpha}{4})}{\Gamma (\frac{N + \alpha}{4})} \Bigr)^2\).
\end{proposition}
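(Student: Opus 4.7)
The statement is the classical weighted Hardy--Littlewood--Sobolev (Stein--Weiss) inequality; the sharp constant \(C_\alpha\) displayed above is due to Herbst and Beckner. My plan is to recast it as a Pitt-type inequality via the Fourier transform, diagonalise the resulting operator by spherical harmonics, and identify the extremal subspace through a Gamma-function monotonicity argument. Concretely, Plancherel's identity together with the normalisation of \(I_\alpha\) gives \(\widehat{I_{\alpha/2}\ast\varphi}(\xi)=|\xi|^{-\alpha/2}\hat\varphi(\xi)\), so that
\[
\int_{\R^N}\bigabs{I_{\alpha/2}\ast\varphi}^2\dif x=\int_{\R^N}|\xi|^{-\alpha}\abs{\hat\varphi(\xi)}^2\dif\xi.
\]
After substituting \(\psi(x):=|x|^{\alpha/2}\varphi(x)\in L^2(\R^N)\), the claim becomes the boundedness, with operator norm \(\sqrt{C_\alpha}\), of \(T\psi(\xi):=|\xi|^{-\alpha/2}\,\mathcal{F}\bigl(|\cdot|^{-\alpha/2}\psi\bigr)(\xi)\) on \(L^2(\R^N)\).

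\medskip

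Since \(T\) commutes with the action of the orthogonal group, I would decompose \(L^2(\R^N)=\bigoplus_{k\ge 0}L^2((0,\infty),r^{N-1}\dif r)\otimes\mathcal{H}_k\) into the spherical-harmonic subspaces \(\mathcal{H}_k\) of degree \(k\). By the Funk--Hecke formula the Fourier transform acts on the \(k\)-th summand as (a constant multiple of) the Hankel transform of order \(\nu_k:=k+(N-2)/2\); after the change of variable \(r=e^s\) the radial part of \(T\) becomes a convolution on \(\R\), hence a multiplier \(m_k(t)\) on the Mellin side that is evaluated in closed form from the classical integral \(\int_0^\infty J_{\nu_k}(r)\,r^{\mu-1}\dif r\). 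A direct manipulation shows that \(|m_k(t)|^2\) is, up to the factor \(2^{-\alpha}\), a product of quotients of the form \(\bigabs{\Gamma(\tfrac{a+it}{2})/\Gamma(\tfrac{b+it}{2})}^2\) with \(0<a<b\), where \(a,b\) depend on \(\nu_k\) and \(\alpha\).

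\medskip

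To conclude, log-convexity of \(\log\Gamma\) implies that each such quotient attains its supremum in \(t\) at \(t=0\) and is decreasing under the shift \((a,b)\mapsto(a+k,b+k)\); hence the operator norm of \(T\) is achieved on the radial subspace \(k=0\) at the critical Mellin frequency \(t=0\), with \(\nu_0=(N-2)/2\), yielding exactly \(C_\alpha=2^{-\alpha}\bigl(\Gamma(\tfrac{N-\alpha}{4})/\Gamma(\tfrac{N+\alpha}{4})\bigr)^2\). The main technical obstacle is the Hankel--Mellin computation of \(m_k\) and the accompanying monotonicity verification in \(k\); a softer argument using Riesz rearrangement together with crude \(L^2\) estimates on dyadic annuli produces only a finite, non-sharp constant and would therefore miss the exact value of \(C_\alpha\) stated in the proposition.
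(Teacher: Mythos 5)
The paper does not prove proposition~\ref{propositionHLSW} at all: the inequality is imported from Stein--Weiss, the sharp constant from Herbst \cite{He1977}*{theorem 2.5}, and the reader is pointed to \cite{MVSHardy} for a simplified proof. So your argument is not comparable to an argument in the paper; what you have written is an outline of the classical proof of Herbst's theorem (equivalently, of the \(L^2\) Pitt inequality in Beckner's formulation): conjugate by \(\abs{x}^{\alpha/2}\), pass to the Fourier side, decompose into spherical harmonics, and diagonalise the radial part by the Mellin transform so that the operator norm is read off from Gamma-quotient multipliers. This route is correct and does produce the stated value of \(C_\alpha\). Two places are thinner than they should be. First, the multiplier \(m_k\) is only asserted, and its form depends on the Fourier normalisation; with the paper's \(I_\alpha\) you should either use the unitary transform (so that \(\widehat{I_{\alpha/2}\ast\varphi}=\abs{\xi}^{-\alpha/2}\widehat{\varphi}\) exactly) or, more simply, bypass Plancherel via the semigroup identity \(\int\bigabs{I_{\alpha/2}\ast\varphi}^2=\int(I_\alpha\ast\varphi)\varphi\). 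Second, the claim that \(t\mapsto\bigabs{\Gamma(\tfrac{a+it}{2})/\Gamma(\tfrac{b+it}{2})}\) peaks at \(t=0\) for \(0<a<b\) does not follow from convexity of \(\log\Gamma\) alone; it requires the product formula \(\abs{\Gamma(x+iy)}^2=\Gamma(x)^2\prod_{n\ge0}\bigl(1+y^2/(x+n)^2\bigr)^{-1}\), which compares the decay of numerator and denominator factor by factor (log-convexity is what you need for the monotonicity in \(k\)). These are standard facts, so I regard this as an incomplete sketch rather than a gap.

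It is worth noting that a much shorter self-contained proof exists and is closer in spirit to what the paper actually needs (only the upper bound with constant \(C_\alpha\) enters, through \((H_2)\) and lemma~\ref{lemmaCoerciveness}). By the semigroup property, the square of the operator norm you want equals the norm of the symmetric kernel \(\abs{x}^{-\alpha/2}I_\alpha(x-y)\abs{y}^{-\alpha/2}\); the Schur test with the weight \(\abs{y}^{-N/2}\), combined with the Riesz composition formula \(I_\alpha\ast\abs{\cdot}^{-\beta}=\tfrac{A_{N-\beta+\alpha}}{A_{N-\beta}}\abs{\cdot}^{\alpha-\beta}\) for \(\alpha<\beta<N\), gives exactly the bound \(C_\alpha=2^{-\alpha}\bigl(\Gamma(\tfrac{N-\alpha}{4})/\Gamma(\tfrac{N+\alpha}{4})\bigr)^2\) in a few lines, with no spherical harmonics and no Mellin analysis. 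Your closing remark that rearrangement plus dyadic estimates loses the constant is right, but the power-weight Schur test is the sharp elementary substitute.
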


This optimal constant \(C_\alpha\) was computed by I.\thinspace Herbst \cite{He1977}*{theorem 2.5}.
See also \cite{MVSHardy} for a simplified proof and further references.

\subsection{Definition of the penalized functional}
In what follows we fix the potential \(V \in C (\R^N; [0, \infty))\)
and a bounded nonempty open set \(\Lambda \subset \R^N\) such that
\[
  \inf_{\partial\Lambda} V>\inf_{\Lambda} V >0.
\]
Without loss of generality we can assume that the boundary of \(\Lambda\) is smooth, and that \(0\in\Lambda\).

We choose a family of \emph{penalization potentials} \(H_\varepsilon \in L^\infty(\R^N,[0, \infty))\) for \(\varepsilon > 0\) in such a way that
\(H_\varepsilon(x)=0\) for all \(x\in\Lambda\), and
\begin{equation}\label{eqHtozero}
\lim_{\varepsilon\to 0}\sup_{\R^N\setminus\Lambda} H_\varepsilon = 0.
\end{equation}
The explicit construction of \(H_\varepsilon\) will be described later, in section~\ref{Section-Back}.
Before that, we shall only rely on the following two assumptions on \(H_\varepsilon\):

\begin{enumerate}
  \item[\((H_1)\)] the space \(H^1_{V}(\R^N)\) is compactly embedded in
 \(L^2(\R^N, \bigl(H_\varepsilon (x)^2\abs{x}^\alpha+\chi_{\Lambda}(x)) \dif x\bigr)\),
 \item[\((H_2)\)] there exists \(\kappa > 0\) such that \(C_\alpha p\kappa < 1\) and for every \(\varphi \in H^1_{V} (\R^N)\),
 \[
    \frac{1}{\varepsilon^\alpha}\int_{\R^N} \abs{H_\varepsilon (x)\varphi (x)}^2 \abs{x}^\alpha \dif x\le \kappa \int_{\R^N} \varepsilon^2 \abs{\nabla \varphi}^2 + V \abs{\varphi}^2,
 \]
 where \(C_\alpha>0\) is the optimal constant in the weighted Hardy--Littlewood--Sobolev inequality of proposition~\ref{propositionHLSW}.
\end{enumerate}
The condition \((H_2)\) can be seen as a Hardy type inequality with a constant bounded uniformly with respect to \(\varepsilon\).
The assumption \((H_2)\) via proposition~\ref{propositionHLSW} implies that the associated to \(H_\varepsilon\) nonlocal quadratic form is well--defined on the space \(H^1_{V} (\R^N)\) and for every \(\varphi \in H^1_{V} (\R^N)\),
\begin{equation*}
\frac{p}{\varepsilon^\alpha}\int_{\R^N}\bigabs{I_\frac{\alpha}{2}\ast (H_\varepsilon\varphi)}^2\le
(1-\delta)\int_{\R^N} \varepsilon^2 \abs{\nabla \varphi}^2 + V \abs{\varphi}^2,
\end{equation*}
where \(\delta=1-C_\alpha p\kappa>0\).

Given a penalization potential \(H_\varepsilon\) which satisfies \((H_1)\) and \((H_2)\), we define the \emph{penalized nonlinearity}
\(g_\varepsilon:\R^N\times\R\to \R\) for \(x\in\R^N\) and \(s\in\R\) by
\[
 g_\varepsilon (x, s): = \chi_\Lambda(x) s_+^{p-1}+\chi_{\R^N\setminus\Lambda}(x)\min\big( s_+^{p-1},  H_\varepsilon (x)\big).
\]
We also denote \(G_\varepsilon(x,s)=\int_0^s g_\varepsilon(x,t)\dif t\).
The function \(g_\varepsilon\) is a Carath\'eodory function
which satisfy the following properties:
\begin{enumerate}
\item[\((g_1)\)]\label{assumptGBoundp} for every \(x \in \R^N\) and \(s \in \R\), \(g_\varepsilon(x,s)\le s_+^{p - 1}\),
\item[\((g_2)\)] for every \(s\in\R\) and \(x\in\R^N\),
\[
0\le g_\varepsilon(x,s)s\le s_+^p\chi_\Lambda + H_\varepsilon(x)s_+(1-\chi_\Lambda),
\]
\[0\le G_\varepsilon(x,s)\le \frac{1}{p}s_+^p\chi_\Lambda+H_\varepsilon(x)s_+(1-\chi_\Lambda),\]
\item[\((g_3)\)] for every \(s\in\R\) and \(x \in \R^N\),
\[
  0\le G_\varepsilon(x,s)\le g_\varepsilon(x,s)s,
\]
and for every \(x\in\Lambda\),
\[
  0\le pG_\varepsilon(x,s)= g_\varepsilon(x,s)s.
\]
\end{enumerate}
We define the \emph{penalized superposition operators} \(\mathscr{g}_\varepsilon\) and \(\mathscr{G}_\varepsilon\) for \(u : \R^N \to \R\) and \(x \in \R^N\) by
\begin{align*}
 \mathscr{g}_\varepsilon (u) (x)&= g_\varepsilon \bigl(x, u (x)\bigr)  &
 & \text{ and } &
 \mathscr{G}_\varepsilon (u) (x)&= G_\varepsilon \bigl(x, u (x)\bigr),
\end{align*}
and the \emph{penalized functional} \(\J_\varepsilon:H^1_{V}(\R^N)\to\R\) for \(u \in H^1_{V} (\R^N)\) by
\[
\J_\varepsilon(u)= \frac{1}{2}\int_{\R^N} \bigl( \varepsilon^2\abs{\nabla u}^2 + V\abs{u}^2\bigr)
 - \frac{p}{2 \varepsilon^\alpha} \int_{\R^N} \bigabs{I_\frac{\alpha}{2}\ast \mathscr{G}_\varepsilon (u)}^2.
\]

\begin{lemma}[Elementary properties of the penalized functional]
\label{lemmaFunctional}
If \(1<p<\frac{N+\alpha}{(N-2)_+}\) and the assumption \((H_1)\) holds,
then \(\J_\varepsilon\in C^1(H^1_{V}(\R^N),\R)\) and for every \(u \in H^1_{V} (\R^N)\) and \(\varphi\in H^1_{V}(\R^N)\),
\begin{equation*}
\dualprod{\J_\varepsilon^\prime(u)}{\varphi} = \int_{\R^N} \bigl(\varepsilon^2\scalprod{\nabla u}{\nabla \varphi} + Vu\varphi\bigr)
- \frac{p}{\varepsilon^\alpha} \int_{\R^N} \bigl(I_\alpha\ast \mathscr{G}_\varepsilon (u)\bigr) \mathscr{g}_\varepsilon (u)\varphi.
\end{equation*}
\end{lemma}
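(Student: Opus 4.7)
The quadratic part $u \mapsto \tfrac{1}{2}\norm{u}_\varepsilon^2$ of $\J_\varepsilon$ is manifestly of class $C^1$ on $H^1_{V}(\R^N)$, so the whole issue reduces to showing that
\[
  \Phi_\varepsilon(u) := \int_{\R^N} \bigabs{I_\frac{\alpha}{2} \ast \mathscr{G}_\varepsilon(u)}^2
\]
is $C^1$ with the derivative produced by formal differentiation. The plan is to split $\mathscr{G}_\varepsilon(u)$ according to whether $x$ lies in $\Lambda$ or in $\R^N \setminus \Lambda$ and to handle the two pieces with different Hardy--Littlewood--Sobolev inequalities. Using $(g_2)$ one has $0 \le \mathscr{G}_\varepsilon(u) \le \tfrac{1}{p} \abs{u}^p \chi_\Lambda + H_\varepsilon \abs{u} \chi_{\R^N \setminus \Lambda}$: the first summand lies in $L^{2N/(N+\alpha)}(\R^N)$ (by Sobolev on the bounded set $\Lambda$, where $\inf_\Lambda V > 0$, combined with $p < (N+\alpha)/(N-2)_+$), and the second in $L^2(\R^N, \abs{x}^\alpha \dif x)$ (by the compact, hence continuous, embedding supplied by $(H_1)$). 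Propositions~\ref{propositionHLS} and~\ref{propositionHLSW} then place $I_\frac{\alpha}{2} \ast \mathscr{G}_\varepsilon(u)$ in $L^2(\R^N)$ with norm controlled by $\norm{u}_\varepsilon$, so $\J_\varepsilon$ is well defined.

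To obtain the G\^ateaux derivative at $u \in H^1_{V}(\R^N)$ in a direction $\varphi \in H^1_{V}(\R^N)$, I would use $\partial_s G_\varepsilon(x, s) = g_\varepsilon(x, s)$ and $(g_1)$ to produce the pointwise bound
\[
  \Bigabs{\tfrac{1}{t}\bigl(\mathscr{G}_\varepsilon(u + t\varphi) - \mathscr{G}_\varepsilon(u)\bigr)} \le (\abs{u} + \abs{\varphi})^{p-1}\abs{\varphi}\, \chi_\Lambda + H_\varepsilon \abs{\varphi}\, \chi_{\R^N\setminus\Lambda}
\]
valid for every $\abs{t} \le 1$. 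H\"older's inequality in $L^{2Np/(N+\alpha)}(\Lambda)$ and $(H_1)$ show that the right-hand side belongs to $L^{2N/(N+\alpha)}(\R^N) + L^2(\R^N, \abs{x}^\alpha \dif x)$, so Lebesgue's theorem gives $\tfrac{1}{t}(\mathscr{G}_\varepsilon(u + t\varphi) - \mathscr{G}_\varepsilon(u)) \to \mathscr{g}_\varepsilon(u)\varphi$ in each of these two spaces. Composing with the bounded linear maps from propositions~\ref{propositionHLS} and~\ref{propositionHLSW}, expanding the square in $\Phi_\varepsilon$, and invoking the semigroup identity $I_\frac{\alpha}{2} \ast I_\frac{\alpha}{2} = I_\alpha$ to rewrite the resulting pairing then yields the formula for $\dualprod{\J_\varepsilon'(u)}{\varphi}$ stated in the lemma.

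The main obstacle is the continuity of $\J_\varepsilon' : H^1_{V}(\R^N) \to H^1_{V}(\R^N)^*$. Given $u_n \to u$ in $H^1_{V}(\R^N)$, after extraction of a subsequence one has $u_n \to u$ almost everywhere, together with domination of $\abs{u_n}$ in $L^{2Np/(N+\alpha)}(\Lambda)$ by a fixed function (Sobolev embedding on $\Lambda$) and in $L^2(\R^N, H_\varepsilon^2 \abs{x}^\alpha \dif x)$ by a fixed function (using the strong convergence provided by the compact embedding of $(H_1)$). The continuity of $s \mapsto g_\varepsilon(x, s)$ and $s \mapsto G_\varepsilon(x, s)$ combined with dominated convergence then yield $\mathscr{G}_\varepsilon(u_n) \to \mathscr{G}_\varepsilon(u)$ in $L^{2N/(N+\alpha)}(\R^N) + L^2(\R^N, \abs{x}^\alpha \dif x)$ and an analogous convergence of $\mathscr{g}_\varepsilon(u_n) \to \mathscr{g}_\varepsilon(u)$. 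Splitting $(I_\alpha \ast \mathscr{G}_\varepsilon(u_n))\mathscr{g}_\varepsilon(u_n) - (I_\alpha \ast \mathscr{G}_\varepsilon(u))\mathscr{g}_\varepsilon(u)$ in the usual way and applying the weighted/unweighted HLS inequalities to both the left factor and to the pairing $I_\frac{\alpha}{2} \ast (\mathscr{g}_\varepsilon(u_n)\varphi)$ on the test-function side, one concludes
\[
  \sup_{\norm{\varphi}_\varepsilon \le 1} \bigabs{\dualprod{\J_\varepsilon'(u_n) - \J_\varepsilon'(u)}{\varphi}} \to 0
\]
along the extracted subsequence. A standard subsequence argument upgrades this to convergence of the full sequence, completing the proof that $\J_\varepsilon \in C^1(H^1_{V}(\R^N), \R)$.
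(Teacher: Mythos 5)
Your proposal is correct and follows essentially the same route as the paper: reduce to the nonlocal term, split $\mathscr{G}_\varepsilon(u)$ into its $\Lambda$ and $\R^N\setminus\Lambda$ parts controlled respectively by the classical and the weighted Hardy--Littlewood--Sobolev inequalities (propositions~\ref{propositionHLS} and \ref{propositionHLSW}), compute the G\^ateaux derivative by dominated convergence, and then establish continuity of $u\mapsto\J_\varepsilon'(u)$ so that the standard upgrade from continuous G\^ateaux to continuous Fr\'echet differentiability applies. Your subsequence-plus-domination argument for the continuity step is an equivalent, more hands-on version of the paper's appeal to standard continuity of superposition operators combined with the bound $\abs{\mathscr{g}_\varepsilon(u)}\le H_\varepsilon$ outside $\Lambda$.
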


Here \(\dualprod{\cdot}{\cdot}\) denotes the duality product between the dual space \(H^1_{V} (\R^N)'\) and the space \(H^1_{V} (\R^N)\). In particular, \(u\in H^1_{V}(\R^N)\) is a critical point of \(\J_\varepsilon\) if and only if \(u\) is a weak solution of the \emph{penalized equation}
\begin{equation}
\label{equationPenalized}\tag{$\mathcal{Q_\varepsilon}$}
- \varepsilon^2\Delta u + Vu = p\varepsilon^{-\alpha}\bigl(I_\alpha \ast \mathscr{G}_\varepsilon (u)\bigr) \mathscr{g}_\varepsilon (u) \quad \text{in \(\R^N\)}.
\end{equation}

\begin{proof}[Proof of lemma~\ref{lemmaFunctional}]
We only need to consider the nonlocal part of \(\J_\varepsilon\),
\[
  \mathcal{N}_\varepsilon(u)= \int_{\R^N} \bigabs{I_\frac{\alpha}{2}\ast \mathscr{G}_\varepsilon(u)}^2.
\]
Since \(\frac{(N-2)_+}{N+\alpha}<\frac{1}{p}\), the classical Sobolev embedding and the assumption \((H_1)\) imply that
the space \(H^1_{V}(\R^N)\) is continuously embedded into
the direct sum
\(L^2(\R^N\setminus\Lambda,H_\varepsilon (x)^2\abs{x}^\alpha\dif x)\oplus L^{2Np/(N+\alpha)}(\Lambda)\) which is naturally isomorphic to a Banach spaces of measurable functions on \(\R^N\), it is sufficient to prove the continuous Fr\'echet differentiability on the latter space.

By the growth assumption \((g_2)\) and standard continuity properties
of superposition operators (see for example \citelist{\cite{Rabinowitz1986}*{proposition~B.1}\cite{WillemMinimax}*{Theorem A.4}\cite{Appell-Zabreiko}*{theorem 3.1}\cite{AmbrosettiProdi1993}*{theorem 2.2}}), the nonlinear superposition operator
\[\mathscr{G}_\varepsilon:L^2(\R^N\setminus\Lambda,H_\varepsilon (x)^2\abs{x}^\alpha\dif x)\oplus L^\frac{2Np}{N+\alpha}(\Lambda) \to
L^2(\R^N\setminus\Lambda,\abs{x}^\alpha\dif x)\oplus L^\frac{2N}{N+\alpha}(\Lambda)\]
is continuous.
In view of the classical and weighted Hardy--Littlewood--Sobolev inequalities of propositions~\ref{propositionHLS} and \ref{propositionHLSW}, the Riesz potential integral operator
\[
  f \in L^2(\R^N\setminus\Lambda,\abs{x}^\alpha\dif x)\oplus L^\frac{2N}{N+\alpha}(\Lambda)\mapsto
  I_\frac{\alpha}{2} \ast f \in L^2(\R^N)
\]
is a continuous linear operator. We deduce therefrom that \(\mathcal{N}_\varepsilon:H^1_{V}(\R^N)\mapsto\R\) is continuous by composition.

For the differentiability, we observe first that the map \(\mathcal{N}_\varepsilon\) is G\^ateaux--differentiable on the space \(L^2(\R^N\setminus\Lambda,H_\varepsilon (x)^2\abs{x}^\alpha\dif x)\oplus L^{2Np/(N+\alpha)}(\Lambda)\) and that its G\^ateaux derivative \(\mathcal{N}_\varepsilon'\) satisfies for every \(u, \varphi \in L^2(\R^N\setminus\Lambda,H_\varepsilon (x)^2\abs{x}^\alpha\dif x)\oplus L^{2Np/(N+\alpha)}(\Lambda)\),
\[
 \dualprod{\mathcal{N}_\varepsilon' (u)}{\varphi} = \int_{\R^N} \bigl(I_\alpha \ast \mathscr{G}_\varepsilon (u)\bigr) \mathscr{g}_\varepsilon (u) \varphi.
\]
By our observations on \(I_{\alpha/2}\) above and by duality, the convolution with \(I_\alpha = I_{\alpha/2} \ast I_{\alpha/2}\) is a bounded linear operator from \(L^2(\R^N\setminus\Lambda,\abs{x}^\alpha\dif x)\oplus L^{2N/(N+\alpha)}(\Lambda)\) to \(L^2(\R^N\setminus\Lambda,\abs{x}^{-\alpha}\dif x)\oplus L^{2N/(N-\alpha)}(\Lambda)\), and thus the nonlinear operator
\[
 u \in L^2(\R^N\setminus\Lambda,H_\varepsilon (x)^2 \abs{x}^\alpha\dif x)\oplus L^\frac{2N p}{N+\alpha}(\Lambda)
 \mapsto I_\alpha \ast \mathscr{G}_\varepsilon (u) \in L^2(\R^N\setminus\Lambda,\abs{x}^{-\alpha}\dif x)\oplus L^\frac{2N}{N-\alpha}(\Lambda)
\]
is continuous. By the growth assumption \((g_2)\) and by our hypothesis \((H_1)\) on the penalization potential, the nonlinear superposition operator \(\mathscr{g}_\varepsilon\) is continuous from \(L^{2Np/(N + \alpha)} (\Lambda)\) to \(L^{2Np/((N + \alpha)(p - 1))} (\Lambda)\). Since \(g_\varepsilon\) is a Carath\'eodory function, the superposition operator \(\mathscr{g}_\varepsilon\) is continuous from \(L^2(\R^N\setminus\Lambda,H_\varepsilon (x)^2 \abs{x}^\alpha\dif x)\) to the set of measurable functions on \(\R^N \setminus \Lambda\) with the topology of convergence in measure on finite-measure sets.
Since \(\abs{\mathscr{g}_\varepsilon (u)} \le H_\varepsilon\) in \(\R^N \setminus \Lambda\), we conclude by Lebesgue's dominated convergence theorem that the nonlinear operator
\begin{multline*}
 u \in L^2(\R^N\setminus\Lambda,\abs{x}^\alpha\dif x)\oplus L^\frac{2N p}{N+\alpha}(\Lambda)\\
 \mapsto (I_\alpha \ast \mathscr{G}_\varepsilon (u))\mathscr{g}_\varepsilon (u) \in L^2(\R^N\setminus\Lambda,H_\varepsilon (x)^{-2}\abs{x}^{-\alpha}\dif x)\oplus L^\frac{2N p}{(2 N - 1)p-\alpha}(\Lambda)
\end{multline*}
is continuous. The map \(\mathcal{N}_\varepsilon\) is continuously G\^ateaux differentiable on \( L^2(\R^N\setminus\Lambda,H_\varepsilon (x)^2\abs{x}^\alpha\dif x)\oplus L^{2Np/(N+\alpha)}(\Lambda)\). Hence it is continuously Fr\'echet differentiable on that space \citelist{\cite{Schwartz1969}*{lemma 1.15}\cite{WillemMinimax}*{proposition~1.3}\cite{AmbrosettiProdi1993}*{theorem 1.9}} and the conclusion follows.
\end{proof}

The difficulty in this proof is that the superposition operator
\[
\mathscr{G}_\varepsilon:L^2(\R^N \setminus \Lambda,H_\varepsilon (x)^2\abs{x}^\alpha\dif x) \to
L^2(\R^N\setminus\Lambda,\abs{x}^\alpha\dif x)
\]
is not Fr\'echet differentiable \cite{Appell-Zabreiko}*{section 3.6}.

\subsection{Existence of solutions of the penalized problem}
We now construct solutions to the penalized problem \eqref{equationPenalized} as critical points of the penalized functional \(\J_\varepsilon\).

\begin{proposition}[Existence of solutions of the penalized problem]\label{propositionPenalizedExistence}
If \(1<p<\frac{N+\alpha}{(N-2)_+}\) and the assumptions \((H_1)\) and \((H_2)\) hold,
then problem \eqref{equationPenalized} has a nonnegative solution \(u_\varepsilon \in H^1_{V} (\R^N)\).
Moreover,
\[
  \J_\varepsilon(u_\varepsilon)=c_\varepsilon :=\inf_{\gamma\in\Gamma_\varepsilon}\max_{t\in[0, 1]}\J_\varepsilon\bigl(\gamma(t)\bigr)>0,
\]
where \(\Gamma_\varepsilon:=\bigl\{\gamma\in C([0, 1], H^1_{V}(\R^N))\mid \gamma(0)=0, \J_\varepsilon\bigl(\gamma(1)\bigr)<0\bigr\}\).
\end{proposition}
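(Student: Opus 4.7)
My plan is to apply the Mountain Pass Theorem to the penalized functional $\J_\varepsilon$ on $H^1_V(\R^N)$, and to check nonnegativity of the resulting critical point by testing the Euler--Lagrange equation with the negative part.

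For the mountain pass geometry, I first note that $\J_\varepsilon(0)=0$. For a local lower bound near $0$, I decompose
\[
 \mathscr{G}_\varepsilon(u)\le \tfrac{1}{p}u_+^p\,\chi_\Lambda + H_\varepsilon u_+\,\chi_{\R^N\setminus\Lambda},
\]
which follows from $(g_2)$, and apply Young's inequality $(a+b)^2\le(1+\lambda)a^2+(1+1/\lambda)b^2$ to $|I_{\alpha/2}\ast\mathscr{G}_\varepsilon(u)|^2$. Assumption $(H_2)$ via the weighted Hardy--Littlewood--Sobolev inequality (Proposition~\ref{propositionHLSW}) handles the outer piece with constant $(1+1/\lambda)(1-\delta)$, while the inner piece is controlled by the classical Hardy--Littlewood--Sobolev inequality (Proposition~\ref{propositionHLS}) combined with the rescaled Sobolev inequality (Proposition~\ref{propositionSobolevScaling}) applied with exponent $q=2Np/(N+\alpha)\in[2,2^*]$. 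Choosing $\lambda>(1-\delta)/\delta$ yields
\[
 \J_\varepsilon(u)\ge \tfrac{\delta}{4}\norm{u}_\varepsilon^2 - C\varepsilon^{-N(p-1)}\norm{u}_\varepsilon^{2p},
\]
so $\J_\varepsilon(u)\ge\alpha_0>0$ on a small sphere $\norm{u}_\varepsilon=\rho$. For the ``mountain'' direction, I pick $\varphi\in C^\infty_c(\Lambda)$ with $\varphi\ge 0$ and $\varphi\not\equiv0$; since $\mathscr{G}_\varepsilon(t\varphi)=t^p\varphi^p/p$ on the support of $\varphi$, we get $\J_\varepsilon(t\varphi)=t^2\norm{\varphi}_\varepsilon^2/2 - t^{2p}(2p\varepsilon^\alpha)^{-1}\norm{I_{\alpha/2}\ast\varphi^p}_2^2$, which tends to $-\infty$ since $2p>2$; any large enough $t$ gives the required $e$.

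The main obstacle is showing that Palais--Smale sequences are bounded, because the penalization destroys the Ambrosetti--Rabinowitz condition outside $\Lambda$. To handle this, I combine two identities for a PS sequence $(u_n)$ at level $c$: from $\J_\varepsilon(u_n)$ I read off $\norm{u_n}_\varepsilon^2 = 2c+o(1)+(p/\varepsilon^\alpha)\norm{I_{\alpha/2}\ast\mathscr{G}_\varepsilon(u_n)}_2^2$, while $(g_3)$ yields
\[
 2\J_\varepsilon(u_n)-\dualprod{\J_\varepsilon'(u_n)}{u_n}=\tfrac{p}{\varepsilon^\alpha}\int_{\R^N}(I_\alpha\ast\mathscr{G}_\varepsilon(u_n))(\mathscr{g}_\varepsilon(u_n)u_n-\mathscr{G}_\varepsilon(u_n))\ge 0,
\]
which is $O(1)+o(\norm{u_n}_\varepsilon)$. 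On $\Lambda$ and on the ``subcritical'' part of $\R^N\setminus\Lambda$ (where $u_n\le H_\varepsilon^{1/(p-1)}$), the integrand equals $(p-1)(I_\alpha\ast\mathscr{G}_\varepsilon(u_n))\mathscr{G}_\varepsilon(u_n)$, giving an upper bound of the form $O(\varepsilon^\alpha \norm{u_n}_\varepsilon)$ on that portion of $\int(I_\alpha\ast\mathscr{G}_\varepsilon(u_n))\mathscr{G}_\varepsilon(u_n)$. The remaining ``clipped'' part is bounded using $\mathscr{G}_\varepsilon(u_n)\le H_\varepsilon u_n^+$, Cauchy--Schwarz, and $(H_2)$, producing the estimate
\[
 \norm{I_{\alpha/2}\ast\mathscr{G}_\varepsilon(u_n)}_2^2 \le C\varepsilon^\alpha\norm{u_n}_\varepsilon + \sqrt{(1-\delta)\varepsilon^\alpha/p}\,\norm{u_n}_\varepsilon\norm{I_{\alpha/2}\ast\mathscr{G}_\varepsilon(u_n)}_2.
\]
Substituting the relation $\norm{I_{\alpha/2}\ast\mathscr{G}_\varepsilon(u_n)}_2^2\sim \varepsilon^\alpha\norm{u_n}_\varepsilon^2/p$ forced by the PS condition gives a quadratic inequality of the form $X^2(1-\sqrt{1-\delta})\lesssim X+1$, which bounds $\norm{u_n}_\varepsilon$ because $\delta>0$.

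Once boundedness is secured, $(H_1)$ provides $u_n\to u$ strongly in $L^2(\R^N,(H_\varepsilon^2\abs{x}^\alpha+\chi_\Lambda)\,dx)$ along a subsequence; by the continuity of the superposition operator $\mathscr{G}_\varepsilon$ and of the convolution with $I_{\alpha/2}$ established in the proof of Lemma~\ref{lemmaFunctional}, $I_{\alpha/2}\ast\mathscr{G}_\varepsilon(u_n)\to I_{\alpha/2}\ast\mathscr{G}_\varepsilon(u)$ in $L^2(\R^N)$, so the nonlocal term is strongly continuous. Writing $\dualprod{\J_\varepsilon'(u_n)-\J_\varepsilon'(u)}{u_n-u}\to 0$ and using this strong continuity converts the condition into $\norm{u_n-u}_\varepsilon^2\to 0$, so $u_n\to u$ in $H^1_V(\R^N)$. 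The Mountain Pass Theorem then furnishes a critical point $u_\varepsilon$ with $\J_\varepsilon(u_\varepsilon)=c_\varepsilon$, and $c_\varepsilon>0$ since $c_\varepsilon\ge\alpha_0$. Finally, testing $\J_\varepsilon'(u_\varepsilon)=0$ against the admissible test function $(u_\varepsilon)_-\in H^1_V(\R^N)$ and observing that $\mathscr{g}_\varepsilon(u_\varepsilon)(u_\varepsilon)_-\equiv 0$ yields $-\norm{(u_\varepsilon)_-}_\varepsilon^2=0$, so $u_\varepsilon\ge 0$.
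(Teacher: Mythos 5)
Your proposal is correct and follows the same overall strategy as the paper: mountain--pass geometry via the decomposition of \(\mathscr{G}_\varepsilon\) into the \(\chi_\Lambda u_+^p/p\) part (classical Hardy--Littlewood--Sobolev plus rescaled Sobolev) and the \(H_\varepsilon u_+\) part (weighted Hardy--Littlewood--Sobolev plus \((H_2)\)), compactness of Palais--Smale sequences from \((H_1)\) and the continuity of the nonlocal term, and nonnegativity by testing with the negative part. The one place where you deviate is the boundedness of Palais--Smale sequences: the paper proves a coerciveness inequality (lemma~\ref{lemmaCoerciveness}) of the form \(\norm{u}_\varepsilon^2\le C\J_\varepsilon(u)+\lambda\dualprod{\J_\varepsilon'(u)}{u}\) by taking \(\theta\in(\tfrac{1}{2p},\tfrac12)\) close to \(\tfrac12\), so that a positive multiple of the quadratic form survives on the left; you instead take \(\theta=\tfrac12\) exactly, which cancels the quadratic part, and recover the missing information from the level identity \(\J_\varepsilon(u_n)\to c\), closing a quadratic inequality in \(\norm{u_n}_\varepsilon\). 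Both arguments rest on the same two structural facts --- the sign identity \(\mathscr{g}_\varepsilon(u)u-\mathscr{G}_\varepsilon(u)=(p-1)\mathscr{G}_\varepsilon(u)\) where the nonlinearity is unclipped, and \(\mathscr{G}_\varepsilon(u)\le H_\varepsilon u_+\) with Cauchy--Schwarz and \((H_2)\) on the clipped region --- so the difference is one of bookkeeping rather than substance; the paper's version has the mild advantage of being a pointwise coerciveness statement for arbitrary \(u\), which is reused later (for instance after lemma~\ref{lemmaUpperBound} and in proposition~\ref{propositionLowerBound}), whereas yours is tied to Palais--Smale sequences. Two cosmetic remarks: your ``good-part'' bound should read \(C\varepsilon^\alpha(1+\norm{u_n}_\varepsilon)\) rather than \(C\varepsilon^\alpha\norm{u_n}_\varepsilon\) (harmless for the boundedness conclusion), and for \(1<p<\frac{N+\alpha}{N}\) the exponent \(q=2Np/(N+\alpha)\) drops below \(2\), so on the bounded set \(\Lambda\) one should interpolate via H\"older before invoking proposition~\ref{propositionSobolevScaling} --- the same point arises in the paper's own lemma~\ref{lemmaMPL}.
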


We call every critical point \(u \in H^1_{V} (\R^N)\) of \(\J_\varepsilon\) such that \(\J_\varepsilon(u)=c_\varepsilon\)
a \emph{ground state solution} of problem \eqref{equationPenalized}.

In order to prove proposition~\ref{propositionPenalizedExistence}
in the rest of this section, we will show that the penalized functional \(\J_\varepsilon\) has a mountain--pass geometry
and satisfies the Palais--Smale condition,
thus fulfilling the assumptions of the mountain-pass lemma \citelist{\cite{AmbrosettiRabinowitz1973}*{theorem 2.1}\cite{Rabinowitz1986}*{theorem 2.2}\cite{Struwe2008}*{theorem 6.1}\cite{WillemMinimax}*{theorem 2.10}}.

\begin{lemma}[Mountain--pass geometry]
\label{lemmaMPL}
Assume that \(1<p<\frac{N+\alpha}{(N-2)_+}\) and \((H_2)\) holds.
Then the functional \(\J_\varepsilon\) is unbounded from below and \(u=0\) is a strict local minimum of \(\J_\varepsilon\).
\end{lemma}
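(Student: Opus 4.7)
The plan is to verify the two assertions of the lemma separately and with minimal computation, both exploiting the structure of the penalized nonlinearity from \((g_2)\)--\((g_3)\).

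For the unboundedness from below, I would pick a nonnegative function \(v \in C^\infty_c(\Lambda)\) with \(v \not\equiv 0\). Since \(\supp(tv) \subset \Lambda\) for every \(t > 0\), property \((g_3)\) (the equality \(pG_\varepsilon(x,s) = g_\varepsilon(x,s)s\) on \(\Lambda\)) gives \(\mathscr{G}_\varepsilon(tv) = (tv)^p/p\) on \(\Lambda\) and \(\mathscr{G}_\varepsilon(tv) = 0\) elsewhere, so
\[
  \J_\varepsilon(tv) = \frac{t^2}{2}\norm{v}_\varepsilon^2 - \frac{t^{2p}}{2 p\varepsilon^\alpha}\int_{\R^N}\bigabs{I_{\alpha/2}\ast v^p}^2,
\]
which tends to \(-\infty\) as \(t \to \infty\) because \(p > 1\) and the convolution integral is strictly positive.

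For the strict local minimum at \(u = 0\), I would use the pointwise bound \((g_2)\) together with the positivity of the Riesz kernel to write
\[
  0 \le I_{\alpha/2}\ast \mathscr{G}_\varepsilon(u) \le I_{\alpha/2}\ast \bigl(\tfrac{1}{p}u_+^p\chi_\Lambda\bigr) + I_{\alpha/2}\ast (H_\varepsilon u_+),
\]
and then split the square via the elementary inequality \((a+b)^2 \le (1+\eta)a^2 + (1+\eta^{-1})b^2\), with \(\eta > 0\) to be chosen. The penalization piece is controlled by the weighted Hardy--Littlewood--Sobolev inequality (Proposition~\ref{propositionHLSW}) combined with assumption \((H_2)\), giving
\[
  \frac{p}{\varepsilon^\alpha}\int_{\R^N}\bigabs{I_{\alpha/2}\ast (H_\varepsilon u_+)}^2 \le p C_\alpha \kappa\,\norm{u}_\varepsilon^2 = (1-\delta)\norm{u}_\varepsilon^2.
\]
The local piece is handled by the classical Hardy--Littlewood--Sobolev inequality (Proposition~\ref{propositionHLS}) followed by either the rescaled Sobolev embedding (Proposition~\ref{propositionSobolevScaling}) or, when the resulting exponent falls below \(2\), by H\"older's inequality on the bounded set \(\Lambda\); this produces a superlinear bound of the form \(C_\varepsilon\norm{u}_\varepsilon^{2p}\) for some finite constant depending on \(\varepsilon\).

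Choosing \(\eta\) large enough so that \((1+\eta^{-1})(1-\delta) < 1\) and setting \(\sigma := 1-(1+\eta^{-1})(1-\delta) > 0\), we obtain
\[
  \J_\varepsilon(u) \ge \tfrac{\sigma}{2}\norm{u}_\varepsilon^2 - C'\norm{u}_\varepsilon^{2p},
\]
so that \(\J_\varepsilon(u) > 0\) whenever \(0 < \norm{u}_\varepsilon\) is sufficiently small. The main subtlety in the argument is exactly this step: one must prevent the quadratic contribution of the penalization from dominating the leading quadratic term \(\tfrac{1}{2}\norm{u}_\varepsilon^2\), and the strict inequality \(C_\alpha p\kappa < 1\) in assumption \((H_2)\) is what supplies the margin \(\delta > 0\) used to absorb the cross term generated by the splitting.
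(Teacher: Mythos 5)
Your proposal is correct and follows essentially the same route as the paper: an explicit test function supported in \(\Lambda\) for unboundedness, and for the local minimum the pointwise bound from \((g_2)\), a Young-type splitting of the square, the weighted Hardy--Littlewood--Sobolev inequality with \((H_2)\) for the penalization part, and the classical Hardy--Littlewood--Sobolev plus rescaled Sobolev inequality for the local part, absorbing the quadratic penalization term thanks to \(C_\alpha p\kappa<1\). Your remark about falling back on H\"older's inequality on the bounded set \(\Lambda\) when \(2Np/(N+\alpha)<2\) is a small but legitimate extra precaution for the range \(1<p<\frac{N+\alpha}{N}\).
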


\begin{proof}
To see that the functional \(\J_\varepsilon\) is unbounded from below, we choose \(\varphi\in C^\infty_c(\Lambda,[0,+\infty)) \setminus \{0\}\) and we observe that \(\J_\varepsilon(\tau\varphi)\to-\infty\) as \(\tau\to\infty\).

Next we show that \(0\) is a strict local minimum of \(\J_\varepsilon\).
Let \(u \in H^1_V (\R^N)\). By the growth assumption \((g_2)\), we have in \(\R^N\),
\[
  \bigabs{I_\frac{\alpha}{2}\ast \mathscr{G}_\varepsilon(u)}
  \le \frac{1}{p} \bigabs{I_\frac{\alpha}{2}\ast (\chi_\Lambda u_+^p)} + \abs{I_\frac{\alpha}{2}\ast (H_\varepsilon u_+)}.
\]
In view of the Young inequality, we obtain for any \(\lambda>0\)
\begin{equation}\label{eq-I-eps-bound}
\frac{p}{2 \varepsilon^\alpha} \int_{\R^N} \bigabs{I_\frac{\alpha}{2}\ast \mathscr{G}_\varepsilon(u)}^2
\le \frac{1+\lambda}{2p\varepsilon^\alpha}\int_{\R^N}\bigabs{I_\frac{\alpha}{2}\ast (\chi_\Lambda u_+^p)}^2+
\frac{p}{2\varepsilon^\alpha}\Big(1+\frac{1}{\lambda}\Big)\int_{\R^N}\bigabs{I_\frac{\alpha}{2}\ast (H_\varepsilon u_+)}^2.
\end{equation}
Since \(1< p\le\frac{N+\alpha}{N-2}\), using the classical Hardy--Littlewood--Sobolev (proposition~\ref{propositionHLS})
and the rescaled Sobolev inequality (proposition~\ref{propositionSobolevScaling}), we obtain
\begin{equation*}
\frac{p}{2 \varepsilon^\alpha} \int_{\R^N}\bigabs{I_\frac{\alpha}{2}\ast (\chi_\Lambda u_+^p)}^2
\le \frac{C}{\varepsilon^\alpha} \left(\int_\Lambda \abs{u}^\frac{2Np}{N+\alpha}\right)^\frac{N+\alpha}{N}\le \frac{C'}{\varepsilon^{(p - 1) N}} \Big(\int_{\R^N} \varepsilon^2 \abs{\nabla u}^2 + V \abs{u}^2\Big)^p.
\end{equation*}
Using the weighted Hardy--Littlewood--Sobolev inequality (proposition~\ref{propositionHLSW}) and the assumption \((H_2)\), we have
\begin{equation*}
\frac{p}{\varepsilon^\alpha}\int_{\R^N} \abs{I_\frac{\alpha}{2}\ast H_\varepsilon u_+}^2 \le
{C_\alpha p\kappa}\int_{\R^N} \big(\varepsilon^2\abs{\nabla u}^2 + V\abs{u}^2\big),
\end{equation*}
where \(C_\alpha p\kappa<1\).
We now choose \(\lambda>0\)
in the bound \eqref{eq-I-eps-bound} so that \(C_\alpha p\kappa\big(1+\frac{1}{\lambda}\big)<1\).
Then from \eqref{eq-I-eps-bound} we obtain
\begin{multline*}
\J_\varepsilon(u)\ge \frac{1}{2}\int_{\R^N} \Big( \varepsilon^2\abs{\nabla u}^2 + V\abs{u}^2\Big)\\
-\frac{p}{2\varepsilon^\alpha}\Big(1+\frac{1}{\lambda}\Big)\int_{\R^N} \bigabs{I_\frac{\alpha}{2}\ast (H_\varepsilon u_+)}^2
-\frac{1+\lambda}{2p\varepsilon^\alpha}\int_{\R^N}\bigabs{I_\frac{\alpha}{2}\ast (\chi_\Lambda u_+^p)}^2\\
\ge\frac{1}{2}\Big(1-C_\alpha p\kappa\big(1+\frac{1}{\lambda}\big)\Big)\int_{\R^N} \big( \varepsilon^2\abs{\nabla u}^2 + V\abs{u}^2\big)-\frac{C''}{\varepsilon^{(p - 1) N}}\Bigl(\int_{\R^N}\varepsilon^2 \abs{\nabla u}^{2} + V \abs{u}^2 \Bigr)^p,
\end{multline*}
the assertion follows since \(p>1\).
\end{proof}

\begin{lemma}[Coerciveness property]
\label{lemmaCoerciveness}
Assume that \(1<p<\frac{N+\alpha}{(N-2)_+}\) and \((H_2)\) holds. For every \(\kappa < \frac{1}{C_\alpha p}\),
there exists \(C_{\kappa, p}>0\) and \(\lambda_{\kappa, p}>0\) such that for every \(u \in H^1_{V} (\R^N)\).
\[
\int_{\R^N} \bigl(\varepsilon^2 \abs{\nabla u}^2 + V \abs{u}^2\bigr) \le C_{\kappa, p} \J_\varepsilon (u)+ \lambda_{\kappa, p} \dualprod{\J_\varepsilon^\prime(u)}{u}.
\]
\end{lemma}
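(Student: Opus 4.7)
My plan is to derive a Nehari-type identity and then control the residual nonlocal term caused by the failure of the Ambrosetti--Rabinowitz condition outside $\Lambda$. Expanding the definitions of $\J_\varepsilon$ and $\J_\varepsilon'$ directly gives
\[
2p\,\J_\varepsilon(u) - \dualprod{\J_\varepsilon'(u)}{u} = (p-1)\norm{u}_\varepsilon^2 - \frac{p}{\varepsilon^\alpha}\int_{\R^N}\bigl(I_\alpha\ast\mathscr{G}_\varepsilon(u)\bigr)\bigl(p\mathscr{G}_\varepsilon(u)-\mathscr{g}_\varepsilon(u)u\bigr).
\]
By property $(g_3)$, $p\mathscr{G}_\varepsilon(u)=\mathscr{g}_\varepsilon(u)u$ on $\Lambda$, so the integrand vanishes there. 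Outside $\Lambda$, a direct case analysis of the explicit form $g_\varepsilon(x,s)=\min(s_+^{p-1},H_\varepsilon(x))$ yields the pointwise bound $0\le p\mathscr{G}_\varepsilon(u)-\mathscr{g}_\varepsilon(u)u\le(p-1)H_\varepsilon u_+$.

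Using the semigroup identity $I_\alpha=I_{\alpha/2}\ast I_{\alpha/2}$ and Cauchy--Schwarz, I then estimate
\[
\int_{\R^N\setminus\Lambda}\bigl(I_\alpha\ast\mathscr{G}_\varepsilon(u)\bigr)H_\varepsilon u_+\le \bigl\lVert I_{\alpha/2}\ast\mathscr{G}_\varepsilon(u)\bigr\rVert_{L^2}\bigl\lVert I_{\alpha/2}\ast(H_\varepsilon u_+)\bigr\rVert_{L^2}.
\]
The weighted Hardy--Littlewood--Sobolev inequality (proposition~\ref{propositionHLSW}) together with assumption~$(H_2)$ bounds the second factor by $\sqrt{C_\alpha\kappa}\,\varepsilon^{\alpha/2}\norm{u}_\varepsilon$. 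For the first factor, the inequality $\mathscr{G}_\varepsilon(u)\le\mathscr{g}_\varepsilon(u)u$ supplied by $(g_3)$ gives
\[
\bigl\lVert I_{\alpha/2}\ast\mathscr{G}_\varepsilon(u)\bigr\rVert_{L^2}^{2}=\int_{\R^N}\bigl(I_\alpha\ast\mathscr{G}_\varepsilon(u)\bigr)\mathscr{G}_\varepsilon(u)\le\int_{\R^N}\bigl(I_\alpha\ast\mathscr{G}_\varepsilon(u)\bigr)\mathscr{g}_\varepsilon(u)u=\frac{\varepsilon^\alpha}{p}\bigl(\norm{u}_\varepsilon^2-\dualprod{\J_\varepsilon'(u)}{u}\bigr),
\]
the last equality being nothing but the definition of $\dualprod{\J_\varepsilon'(u)}{u}$.

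Assembling these bounds shows that the residual integral is dominated by a multiple of $\sqrt{pC_\alpha\kappa}\,\norm{u}_\varepsilon\sqrt{\norm{u}_\varepsilon^2-\dualprod{\J_\varepsilon'(u)}{u}}$. Applying Young's inequality $ab\le(a^2+b^2)/2$ splits this into a multiple of $\norm{u}_\varepsilon^2$ and a multiple of $\norm{u}_\varepsilon^2-\dualprod{\J_\varepsilon'(u)}{u}$. The hypothesis $pC_\alpha\kappa<1$ is exactly what is needed to keep the coefficient of $\norm{u}_\varepsilon^2$ that has to be absorbed strictly smaller than $p-1$, so that a final rearrangement of the Nehari identity produces an estimate of the claimed form, with explicit constants $C_{\kappa,p}$ and $\lambda_{\kappa,p}$ of order $\bigl((p-1)(1-\sqrt{pC_\alpha\kappa})\bigr)^{-1}$.

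The main obstacle is precisely the failure of the Ambrosetti--Rabinowitz condition on $\R^N\setminus\Lambda$: there $\mathscr{g}_\varepsilon(u)u$ may be strictly smaller than $p\mathscr{G}_\varepsilon(u)$, and the standard Nehari--Poho\v zaev manipulation leaves a genuinely nonlocal residual. The proof hinges on the observation that this residual is pointwise controlled by $H_\varepsilon u_+$, which by $(H_2)$ is absorbed into the quadratic form $\norm{\cdot}_\varepsilon^2$ with a constant uniform in $\varepsilon$ and strictly less than one.
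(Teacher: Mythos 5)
Your argument is correct, and it reaches the conclusion by a genuinely different route than the paper. You work at the exact Ambrosetti--Rabinowitz endpoint, forming \(2p\J_\varepsilon(u)-\dualprod{\J_\varepsilon'(u)}{u}\), where the contribution of \(\Lambda\) to the residual \(\int (I_\alpha\ast\mathscr{G}_\varepsilon(u))(p\mathscr{G}_\varepsilon(u)-\mathscr{g}_\varepsilon(u)u)\) vanishes identically by \((g_3)\), and you control the part outside \(\Lambda\) by the pointwise bound \(0\le p\mathscr{G}_\varepsilon(u)-\mathscr{g}_\varepsilon(u)u\le (p-1)H_\varepsilon u_+\) (which indeed follows from the explicit form of \(g_\varepsilon\): in the saturated regime \(s>H_\varepsilon^{1/(p-1)}\) one computes \(pG_\varepsilon-g_\varepsilon s=(p-1)H_\varepsilon(s-H_\varepsilon^{1/(p-1)})\)). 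The key novelty is your bound \(\norm{I_{\alpha/2}\ast\mathscr{G}_\varepsilon(u)}_{L^2}^2\le\int(I_\alpha\ast\mathscr{G}_\varepsilon(u))\mathscr{g}_\varepsilon(u)u=\frac{\varepsilon^\alpha}{p}(\norm{u}_\varepsilon^2-\dualprod{\J_\varepsilon'(u)}{u})\), which lets the cross term be closed by Young's inequality and re-absorbed into the Nehari identity. The paper instead takes \(\theta\in(\frac{1}{2p},\frac12)\) strictly interior in the combination \(\J_\varepsilon(u)-\theta\dualprod{\J_\varepsilon'(u)}{u}\), so that the \(\Lambda\)-region produces a \emph{negative} multiple of \(\norm{I_{\alpha/2}\ast(\chi_\Lambda u_+^p)}_{L^2}^2\); the cross term is then handled by completing the square in that quantity, and only the \(\norm{I_{\alpha/2}\ast(H_\varepsilon u_+)}_{L^2}^2\) term survives to be absorbed by \((H_2)\). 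Both methods use only \((H_2)\) and yield constants depending solely on \(\kappa\) and \(p\) with the same blow-up rate as \(pC_\alpha\kappa\to1\); yours avoids the optimization over \(\theta\) at the price of invoking the identity for \(\dualprod{\J_\varepsilon'(u)}{u}\) twice.

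One remark on the sign of \(\lambda_{\kappa,p}\): your computation (like the paper's, once the signs in its final display are tracked carefully) produces a \emph{negative} coefficient in front of \(\dualprod{\J_\varepsilon'(u)}{u}\). This is unavoidable: for \(u=\tau\varphi\) with \(\varphi\) supported in \(\Lambda\) and \(\tau\) large, \(C\J_\varepsilon(u)+\lambda\dualprod{\J_\varepsilon'(u)}{u}\to-\infty\) whenever \(C,\lambda>0\), so the statement should be read with \(\lambda_{\kappa,p}\in\R\). This is harmless in every application of the lemma (for critical points \(\dualprod{\J_\varepsilon'(u)}{u}=0\), and for Palais--Smale sequences \(\abs{\dualprod{\J_\varepsilon'(u_n)}{u_n}}=o(\norm{u_n}_\varepsilon)\)), so your proof establishes exactly the version of the lemma that is used.
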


Note that the constant does not depend on the potential \(V\), the set \(\Lambda\) or the penalization potential \(H_\varepsilon\) except via \(\kappa\).
This lemma will imply in particular the boundedness of Palais-Smale sequences.

\begin{proof}[Proof of lemma~\ref{lemmaCoerciveness}]
Given \(\theta\) such that \(\frac{1}{2p}<\theta<\frac{1}{2}\), we shall estimate
\begin{multline*}
J := \Big(\frac{1}{2}-\theta\Big)\int_{\R^N} \big(\varepsilon^2 \abs{\nabla u}^2 + \abs{u}^2\big) -
\J_\varepsilon(u)+\theta\dualprod{\J^\prime_\varepsilon(u)}{u}\\
=
\frac{p}{2 \varepsilon^\alpha} \int_{\R^N} \bigl(I_\alpha \ast \mathscr{G}_\varepsilon (u)\bigr) \bigl(\mathscr{G}_\varepsilon(u) - 2\theta \mathscr{g}_\varepsilon (u) u \bigr).
\end{multline*}
If \(\theta \ge \frac{1}{2 p}\), we obtain, as \(\mathscr{G}_\varepsilon(u)-2\theta \mathscr{g}_\varepsilon (u) u = \bigl(\frac{1}{p} - 2 \theta \bigr) u_+^p\) on \(\Lambda\),
\begin{equation*}
  \frac{p}{2} \int_{\Lambda} \bigl(I_\alpha \ast \mathscr{G}_\varepsilon (u)\bigr) \bigl(\mathscr{G}_\varepsilon (u) (y)-2\theta \mathscr{g}_\varepsilon (u) (y)u\bigr) \le -
  \Bigl(\theta - \frac{1}{2p}\Bigr)\int_{\Lambda} \bigl(I_\alpha \ast (\chi_{\Lambda} u_+^p)\bigr) u_+^p.
\end{equation*}
On the other hand, if \(\theta \le \frac{1}{2}\) then \(\mathscr{G}_\varepsilon(u) - 2\theta \mathscr{g}_\varepsilon (u) u \le (1 - 2 \theta) H_\varepsilon u_+\) on \(\R^N \setminus \Lambda\),
and therefore,
\begin{multline*}
 \frac{p}{2} \int_{\R^N \setminus \Lambda} \bigl(I_\alpha \ast \mathscr{G}_\varepsilon (u)\bigr) \bigl(\mathscr{G}_\varepsilon(u) - 2\theta \mathscr{g}_\varepsilon (u)u\bigr)\\
 \shoveleft{\qquad\qquad\le
 p\Bigl(\frac{1}{2} - \theta\Bigr) \int_{\R^N \setminus \Lambda} \bigl(I_\alpha \ast \mathscr{G}_\varepsilon (u)\bigr) H_\varepsilon u_+}\\
 \le \Bigl(\frac{1}{2} - \theta\Bigr) \int_{\R^N \setminus \Lambda} \bigl(I_\alpha \ast (\chi_\Lambda u_+^p) \bigr) H_\varepsilon u_+
 + p\Bigl(\frac{1}{2} - \theta\Bigr) \int_{\R^N \setminus \Lambda} \bigl(I_\alpha \ast H_\varepsilon u_+\bigr) H_\varepsilon u_+ \bigr.
\end{multline*}
By the Cauchy--Schwarz inequality, we deduce that
\begin{multline*}
 J \le - \Bigl(\theta - \frac{1}{2 p}\Bigr) \int_{\R^N} \bigabs{I_{\frac{\alpha}{2}} \ast (\chi_\Lambda u_+^p)}^2\\
 + \Bigl(\frac{1}{2} - \theta\Bigr) \Bigl(\int_{\R^N} \bigabs{I_{\frac{\alpha}{2}} \ast (\chi_\Lambda u_+^p)}^2\Bigr)^\frac{1}{2}\Bigl(\int_{\R^N} \bigabs{I_{\frac{\alpha}{2}} \ast (H_\varepsilon u_+)}^2\Bigr)^\frac{1}{2}\\
  + p\Bigl(\frac{1}{2} - \theta\Bigr)\int_{\R^N} \bigabs{I_{\frac{\alpha}{2}} \ast (H_\varepsilon u_+)}^2.
\end{multline*}
The right-hand side is quadratic in \(\bigl(\int_{\R^N} \bigabs{I_{\frac{\alpha}{2}} \ast (\chi_\Lambda u_+^p)}^2\bigr)^{1/2}\) and can thus be estimated to obtain the inequality
\[
 J \le \Bigl(\frac{1}{2} - \theta\Bigr)\Bigl(p + \frac{\frac{1}{2} - \theta}{4(\theta - \frac{1}{2p})}\Bigr)\int_{\R^N} \bigabs{I_{\frac{\alpha}{2}} \ast (H_\varepsilon u_+)}^2.
\]
In view of the assumption \((H_2)\), we have thus proved that
\[
\Big(\frac{1}{2}-\theta\Big)\biggl(1 - C_\alpha \kappa \Bigl(p + \frac{\frac{1}{2} - \theta}{4(\theta - \frac{1}{2p})}\Bigr)\biggr)\int_{\R^N} \bigl(\varepsilon^2 \abs{\nabla u}^2 + \abs{u}^2\bigr) \le
\J_\varepsilon (u)+\theta \dualprod{\J_\varepsilon^\prime(u)}{u}.
\]
We conclude by observing that if \(C_\alpha \kappa p < 1\), then when \(\theta\) is sufficiently close to \(\frac{1}{2}\),
\[
 \Big(\frac{1}{2}-\theta\Big)\biggl(1 - C_\alpha \kappa \Bigl(p + \frac{\frac{1}{2} - \theta}{4(\theta - \frac{1}{2p})}\Bigr)\biggr) > 0.\qedhere
\]
\end{proof}

\begin{lemma}[Palais-Smale condition]
\label{lemmaBPS}
Assume that \(1<p<\frac{N+\alpha}{(N-2)_+}\) and \((H_1)\), \((H_2)\) holds.
If \((u_n)\) is a Palais-Smale--sequence in \(H^1_{V}(\R^N)\) for the functional \(\J_\varepsilon\), that is, for some \(c \in \R\),
\[
\J_\varepsilon(u_n)\to c\quad \text{and} \quad \J_\varepsilon^\prime(u_n)\to 0.
\]
then, up to a subsequence, \((u_n)\) converges strongly to a limit \(u \in H^1_{V}(\R^N)\).
\end{lemma}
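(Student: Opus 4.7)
The plan is the standard three-step scheme for Palais--Smale sequences (boundedness, weak limit is a critical point, norm convergence), with the compactness assumption \((H_1)\) doing the bulk of the work in handling the nonlocal term.

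First, I would establish that \((u_n)\) is bounded in \(H^1_V(\R^N)\). By the Cauchy--Schwarz inequality applied to \(\dualprod{\J_\varepsilon'(u_n)}{u_n}\),
\[
\bigabs{\dualprod{\J_\varepsilon'(u_n)}{u_n}} \le \norm{\J_\varepsilon'(u_n)}_* \norm{u_n}_\varepsilon,
\]
and since \(\norm{\J_\varepsilon'(u_n)}_* \to 0\), lemma~\ref{lemmaCoerciveness} (with the constant \(\kappa\) from \((H_2)\), which satisfies \(C_\alpha p \kappa < 1\)) gives \(\norm{u_n}_\varepsilon^2 \le C_{\kappa,p} \J_\varepsilon(u_n) + \lambda_{\kappa,p}\norm{\J_\varepsilon'(u_n)}_*\norm{u_n}_\varepsilon\). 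A Young-type absorption yields a uniform bound on \(\norm{u_n}_\varepsilon\). Passing to a subsequence, I extract \(u \in H^1_V(\R^N)\) with \(u_n \weakto u\) weakly.

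Second, I would upgrade this weak convergence to strong convergence in the spaces where the nonlinearity lives. On the bounded set \(\Lambda\), since \(\frac{2Np}{N+\alpha} < \frac{2N}{(N-2)_+}\) (from \(p < \frac{N+\alpha}{(N-2)_+}\)), the classical Rellich--Kondrachov theorem gives \(u_n \to u\) strongly in \(L^\frac{2Np}{N+\alpha}(\Lambda)\). On the complement \(\R^N \setminus \Lambda\), hypothesis \((H_1)\) gives strong convergence in \(L^2(\R^N, H_\varepsilon(x)^2\abs{x}^\alpha\dif x)\). Together, these place \(u_n \to u\) strongly in the direct sum space on which \(\mathscr{G}_\varepsilon\) and \(\mathscr{g}_\varepsilon\) were shown to be continuous in the proof of lemma~\ref{lemmaFunctional}. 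Combining this with the bounded linearity of convolution with \(I_{\alpha/2}\) and \(I_\alpha\) established there, I deduce
\[
\frac{p}{\varepsilon^\alpha}\int_{\R^N} \bigl(I_\alpha \ast \mathscr{G}_\varepsilon(u_n)\bigr)\mathscr{g}_\varepsilon(u_n)\,\varphi \longrightarrow \frac{p}{\varepsilon^\alpha}\int_{\R^N} \bigl(I_\alpha \ast \mathscr{G}_\varepsilon(u)\bigr)\mathscr{g}_\varepsilon(u)\,\varphi
\]
for every fixed \(\varphi \in H^1_V(\R^N)\), and similarly with \(\varphi\) replaced by \(u_n\) itself. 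Passing to the limit in \(\dualprod{\J_\varepsilon'(u_n)}{\varphi} \to 0\) then shows \(\J_\varepsilon'(u) = 0\), in particular \(\dualprod{\J_\varepsilon'(u)}{u} = 0\).

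Third, for the strong convergence in \(H^1_V(\R^N)\), I would test \(\J_\varepsilon'(u_n) \to 0\) against \(u_n\) itself:
\[
\norm{u_n}_\varepsilon^2 = \frac{p}{\varepsilon^\alpha}\int_{\R^N} \bigl(I_\alpha \ast \mathscr{G}_\varepsilon(u_n)\bigr)\mathscr{g}_\varepsilon(u_n)\,u_n + \dualprod{\J_\varepsilon'(u_n)}{u_n}.
\]
The nonlocal integral on the right-hand side requires the convergence \(\mathscr{g}_\varepsilon(u_n)u_n \to \mathscr{g}_\varepsilon(u)u\) in the appropriate space; this is the step that relies most delicately on the compact embedding of \((H_1)\), since on \(\R^N\setminus\Lambda\) the product \(\mathscr{g}_\varepsilon(u_n)u_n\) is dominated by \(H_\varepsilon u_n\), whose \(L^2(\abs{x}^\alpha\dif x)\)-norm is controlled precisely by the target of the compact embedding. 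The same argument applied to \(\dualprod{\J_\varepsilon'(u)}{u} = 0\) gives the identity \(\norm{u}_\varepsilon^2 = \frac{p}{\varepsilon^\alpha}\int_{\R^N} (I_\alpha \ast \mathscr{G}_\varepsilon(u))\mathscr{g}_\varepsilon(u) u\). Comparing these two identities yields \(\norm{u_n}_\varepsilon \to \norm{u}_\varepsilon\), and in the Hilbert space \(H^1_V(\R^N)\) this together with weak convergence implies strong convergence.

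The main obstacle I anticipate is the one flagged at the end of the proof of lemma~\ref{lemmaFunctional}: the superposition operator \(\mathscr{G}_\varepsilon\) into the weighted \(L^2\) space fails to be Fr\'echet differentiable, so one cannot just invoke abstract compactness of a completely continuous derivative. The way around this is to use that, at the level of the nonlinearity itself (not its derivative), \(\mathscr{G}_\varepsilon\) and \(\mathscr{g}_\varepsilon\) are genuinely \emph{continuous} from the strongly convergent topology provided by \((H_1)\), and to use Lebesgue dominated convergence on \(\R^N\setminus\Lambda\) with the uniform pointwise bound \(\abs{\mathscr{g}_\varepsilon(u_n)}\le H_\varepsilon\) exactly as in the proof of lemma~\ref{lemmaFunctional}.
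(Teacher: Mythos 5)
Your proposal is correct and follows essentially the same route as the paper: boundedness via lemma~\ref{lemmaCoerciveness}, strong convergence in the direct-sum space $L^2(\R^N\setminus\Lambda,H_\varepsilon(x)^2\abs{x}^\alpha\dif x)\oplus L^{2Np/(N+\alpha)}(\Lambda)$ from $(H_1)$ and Rellich--Kondrachov, continuity of the superposition operators plus the (weighted) Hardy--Littlewood--Sobolev inequality to pass to the limit in the nonlocal term, and finally comparison of $\norm{u_n}_\varepsilon^2$ with $\norm{u}_\varepsilon^2$ through the two critical-point identities to upgrade weak to strong convergence. You are in fact slightly more explicit than the paper in verifying that the weak limit satisfies $\dualprod{\J_\varepsilon'(u)}{u}=0$, a step the paper uses implicitly in its final $\limsup$ computation.
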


\begin{proof}
Lemma \ref{lemmaCoerciveness} implies in a standard way that the Palais--Smale sequence \((u_n)_{n \in \N}\) is bounded in \(H^1_{V}(\R^N)\). It follows that \(\dualprod{\J_\varepsilon^\prime(u_n)}{u_n}\to 0\) and that, up to a subsequence, \(u_n \weakto u \in H^1_{V}(\R^N)\) as \(n \to \infty\).
By our compactness assumption \((H_1)\) and by the Rellich--Kondrachov compactness theorem (see for example \citelist{\cite{LiebLoss2001}*{theorem 8.9}\cite{Adams1975}*{theorem 6.2}\cite{Willem2013}*{theorem 6.4.6}}) we conclude that, as \(n \to \infty\),
\[
  u_n \to u\quad\text{in \(L^2(\R^N\setminus\Lambda,H^2(x)\abs{x}^\alpha \dif x)\oplus L^\frac{2Np}{N+\alpha}(\Lambda)\)}.
\]
From the property \((g_2)\) of the nonlinearity it follows in a standard way that \(\mathscr{G}_\varepsilon\) as well as the superposition operator \(u\mapsto \mathscr{g}_\varepsilon (u) u\)
are bounded and continuous operators from \(L^2(\R^N\setminus\Lambda,H^2(x)\abs{x}^\alpha\dif x)\oplus L^\frac{2Np}{N+\alpha}(\Lambda)\) into
\(L^2(\R^N\setminus\Lambda,\abs{x}^\alpha\dif x)\oplus L^\frac{2N}{N+\alpha}(\Lambda)\) (see the proof of lemma~\ref{lemmaFunctional}), so we have, as \(n \to \infty\),
\[
\mathscr{G}_\varepsilon (u_n)\to \mathscr{G}_\varepsilon (u)\quad\text{and}\quad\mathscr{g}_\varepsilon (u_n)u_n\to \mathscr{g}_\varepsilon (u)u\quad\text{in \(L^2(\R^N\setminus\Lambda,\abs{x}^\alpha \dif x)\oplus L^\frac{2N}{N+\alpha}(\Lambda)\)}.
\]
By the classical and weighted Hardy--Littlewood--Sobolev inequalities of propositions~\ref{propositionHLS} and \ref{propositionHLSW} we conclude that, as \(n \to \infty\),
\[
  I_\frac{\alpha}{2}\ast\mathscr{G}_\varepsilon (u_n)\to I_\frac{\alpha}{2}\ast\mathscr{G}_\varepsilon (u)
\quad\text{and}\quad I_\frac{\alpha}{2}\ast(\mathscr{g}_\varepsilon (u_n)u_n)\to I_\frac{\alpha}{2}\ast(\mathscr{g}_\varepsilon (u)u)
\quad\text{in \(L^2(\R^N)\)}.
\]
Since \(u_n \weakto u \in H^1_{V}(\R^N)\) and \(\dualprod{\J^\prime(u_n)}{u_n}\to 0\) as \(n \to \infty\),
we estimate
\begin{multline*}
\limsup_{n \to \infty}\int_{\R^N} \bigl(\varepsilon^2 \abs{\nabla u_n-\nabla u}^2 + V \abs{u_n - u}^2\bigr) \\
\hspace{-15ex}= \limsup_{n \to \infty} \int_{\R^N} \bigl(\varepsilon^2 \abs{\nabla u_n} + V \abs{u_n}^2\bigr) -\int_{\R^N} \bigl(\abs{\nabla u}^2 + V \abs{u}^2\bigr) \\
\le\limsup_{n \to \infty} \frac{p}{\varepsilon^\alpha} \int_{\R^N} \bigl(I_\frac{\alpha}{2}\ast\mathscr{G}_\varepsilon (u_n)\bigr)\,\bigl(I_\frac{\alpha}{2}\ast\mathscr{g}_\varepsilon (u_n)u_n\bigr)\\
-\frac{p}{\varepsilon^\alpha}  \int_{\R^N} \bigl(I_\frac{\alpha}{2}\ast\mathscr{G}_\varepsilon (u)\bigr)\, \bigl(I_\frac{\alpha}{2}\ast\mathscr{g}_\varepsilon (u)u\bigr)=0,
\end{multline*}
which completes the proof.
\end{proof}

The same argument as above shows that the mapping \(u \mapsto \J_\varepsilon^\prime(u)\) is completely continuous from \(H^1_{V}(\R^N) \to \big(H^1_{V}(\R^N)\big)^*\), that is, \(\J_\varepsilon^\prime\) maps bounded sets to precompact sets.

\begin{proof}[Proof of proposition~\ref{propositionPenalizedExistence}]
Since \(\J_\varepsilon\) is continuously Fr\'echet differentiable (lemma~\ref{lemmaFunctional}), has the mountain--pass geometry (lemma~\ref{lemmaMPL}) and satisfies the Palais-Smale condition (lemma~\ref{lemmaBPS}),
all the assumptions of the mountain--pass lemma are satisfied \citelist{\cite{AmbrosettiRabinowitz1973}*{theorem 2.1}\cite{Rabinowitz1986}*{theorem 2.2}\cite{Struwe2008}*{theorem 6.1}\cite{WillemMinimax}*{theorem 2.10}}, and thus \(\J_\varepsilon\) admits
a critical point \(u_\varepsilon\in H^1_{V}(\R^N)\) at the critical level \(c_\varepsilon\).
Moreover, \(u_\varepsilon\in H^1_{V}(\R^N)\) is a solution of \eqref{equationPenalized}
Also, since \(-\varepsilon^2 +\Delta u_\varepsilon + Vu_\varepsilon\ge 0\) in \(\R^N\), by the weak maximum principle for weak solutions we have \(u_\varepsilon \ge 0\) in \(\R^N\).
\end{proof}

\section{Asymptotics of solutions of the penalized problem}

\subsection{The limiting problem}

For \(\lambda>0\), the \emph{limiting problem} associated to our problem \eqref{equationNLChoquard} is
\begin{equation}
\label{equationLimit}\tag{$\mathcal{R_\lambda}$}
- \Delta v + \lambda v = \bigl(I_\alpha \ast \abs{v}^p\bigr)\abs{v}^{p-2}v\quad \text{in \(\R^N\)},
\end{equation}
and the corresponding \emph{limiting functional} is \(\mathcal{I}_\lambda : H^1 (\R^N) \to \R\) defined for \(v \in H^1 (\R^N)\) by
\[
  \mathcal{I}_\lambda (v) = \frac{1}{2} \int_{\R^N} \abs{\nabla v}^2 + \lambda \abs{v}^2
- \frac{1}{2 p} \int_{\R^N} \bigabs{I_\frac{\alpha}{2} \ast \abs{v}^p}^2.
\]
By the Hardy--Littlewood--Sobolev inequality (proposition~\ref{propositionHLS}), the functional \(\I_\lambda\)
is well-defined on the space \(H^1(\R^N)\) for \(p \in [1, \infty)\) such that \(\frac{N+\alpha}{N}\le p\le \frac{N+\alpha}{(N-2)_+}\).
We define the \emph{limiting energy} by
\begin{equation}\label{Elambda}
 \mathcal{E} (\lambda) = \inf_{v \in H^1(\R^N)\setminus\{0\}} \max_{t > 0} \I_{\lambda} (t v).
\end{equation}
Since for every \(v \in H^1 (\R^N)\), \(\mathcal{I}_{\lambda} (\abs{v}) = \mathcal{I}_{\lambda} (v)\), the functional \(\mathcal{I}_\lambda\) is continuous and the set of compactly supported smooth functions \(C^\infty_c (\R^N)\) is dense in its domain \(H^1 (\R^N)\),
\begin{equation}\label{ElambdaCinfty}
 \mathcal{E} (\lambda) = \inf_{\substack{v\in C^\infty_c (\R^N) \setminus\{0\}\\ v \ge 0}} \max_{t > 0} \I_{\lambda} (t v).
\end{equation}

The following proposition provides information about the homogeneity of \(\I_\lambda\).

\begin{proposition}[Scaling of the limiting problem]
\label{propositionScaling}
Let \(\lambda > 0\) and \(v \in H^1 (\R^N)\). If \(v_\lambda \in H^1 (\R^N)\) is defined for every \(y \in \R^N\) by
\[
  v_\lambda (y) = \lambda^{\frac{\alpha + 2}{4 (p - 1)}} v (\sqrt{\lambda}\,y),
\]
then
\[
  \I_{\lambda} (v_\lambda) = \lambda^{\frac{\alpha + 2}{2(p - 1)} - \frac{N - 2}{2}} I_1 (v_1).
\]
In particular, \(v\) is a solution of \((\mathcal{R}_1)\) if and only if
\(v_\lambda\)
is a solution of \eqref{equationLimit}
and
\[
 \mathcal{E} (\lambda) = \mathcal{E} (1) \lambda^{\frac{\alpha + 2}{2(p - 1)} - \frac{N - 2}{2}}.
\]
\end{proposition}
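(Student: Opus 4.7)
The proof is essentially a careful change of variables, and the plan is to verify the functional identity $\mathcal{I}_\lambda(v_\lambda) = \lambda^\theta \mathcal{I}_1(v_1)$ (with $\theta = \frac{\alpha+2}{2(p-1)} - \frac{N-2}{2}$) term by term, and then to deduce the correspondence of solutions and the scaling of $\mathcal{E}(\lambda)$ from it.

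First I will record how each piece of $\mathcal{I}_\lambda$ transforms under the rescaling $v_\lambda(y) = \lambda^{\frac{\alpha+2}{4(p-1)}} v(\sqrt{\lambda}\,y)$. A direct substitution $x = \sqrt{\lambda}\,y$ shows that both $\int |\nabla v_\lambda|^2$ and $\lambda \int |v_\lambda|^2$ pick up the same factor $\lambda^{\frac{\alpha+2}{2(p-1)}+1-N/2} = \lambda^\theta$ relative to the corresponding quantities for $v$. For the Choquard term, the homogeneity $I_{\alpha/2}(y-z) = \lambda^{(N-\alpha/2)/2}I_{\alpha/2}(\sqrt{\lambda}y - \sqrt{\lambda}z)$ combined with the change of variables yields $\int |I_{\alpha/2}\ast |v_\lambda|^p|^2\, dy = \lambda^{\frac{(\alpha+2)p}{2(p-1)}-\alpha/2-N/2} \int |I_{\alpha/2}\ast|v|^p|^2$, and a short algebraic check shows that this exponent also equals $\theta$. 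So all three terms of $\mathcal{I}_\lambda(v_\lambda)$ scale by the same factor $\lambda^\theta$, which is exactly the claim.

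Second, for the equivalence of solutions, I would not recompute the equation from scratch. Instead, since the map $\Phi_\lambda : v \mapsto v_\lambda$ is a continuous linear bijection of $H^1(\R^N)$ with continuous inverse $\Phi_\lambda^{-1} = \Phi_{1/\lambda}$ (up to a rewriting of constants), the functional identity $\mathcal{I}_\lambda \circ \Phi_\lambda = \lambda^\theta \mathcal{I}_1$ together with the chain rule gives a bijection between critical points of $\mathcal{I}_1$ and of $\mathcal{I}_\lambda$. As critical points of $\mathcal{I}_\mu$ are precisely the weak $H^1$-solutions of $(\mathcal{R}_\mu)$, this yields the stated correspondence between solutions of $(\mathcal{R}_1)$ and of \eqref{equationLimit}.

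Finally, for the scaling of $\mathcal{E}(\lambda)$, I use that $\Phi_\lambda$ commutes with positive rescalings in the sense that $(tv)_\lambda = t v_\lambda$ (linearity), so for any $v \in H^1(\R^N)\setminus\{0\}$,
\[
  \max_{t>0}\mathcal{I}_\lambda(t v_\lambda) = \max_{t>0}\mathcal{I}_\lambda((tv)_\lambda) = \lambda^\theta \max_{t>0}\mathcal{I}_1(tv).
\]
Since $\Phi_\lambda$ is a bijection of $H^1(\R^N)\setminus\{0\}$, taking the infimum over $v$ (equivalently over $v_\lambda$) in \eqref{Elambda} yields $\mathcal{E}(\lambda) = \lambda^\theta\, \mathcal{E}(1)$. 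The only mild obstacle is the arithmetic of checking that the three exponents of $\lambda$ coming from the gradient term, the mass term, and the nonlocal term all coincide with $\theta$; once that algebraic identity is verified, the rest is a formal consequence of the linearity of the rescaling and the bijection property.
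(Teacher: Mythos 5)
Your proposal is correct and is precisely the ``direct computation'' that the paper's proof leaves to the reader: the term-by-term verification that the gradient, mass and nonlocal terms all pick up the factor \(\lambda^{\frac{\alpha+2}{2(p-1)}-\frac{N-2}{2}}\) checks out, and the deduction of the solution correspondence and of the scaling of \(\mathcal{E}\) from the functional identity via the linear bijection \(\Phi_\lambda\) is sound. No gaps.
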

In particular, if \(\mathcal{E}\) is continuous and, since \(p < \frac{N + \alpha}{N - 2}\), is increasing on \((0, \infty)\).
\begin{proof}[Proof of proposition~\ref{propositionScaling}]
The proof is a direct computation.
\end{proof}

The existence of solutions of Choquard equation \((\mathcal{R}_1)\) was proved for \(p = 2\) by variational methods by E.\thinspace H.\thinspace Lieb, P.-L.\thinspace Lions and G.\thinspace Menzala \citelist{\cite{Lieb1977}\cite{Lions1980}\cite{Menzala1980}}. It was studied again later with ordinary differential equations techniques \citelist{\cite{Tod-Moroz-1999}\cite{Moroz-Penrose-Tod-1998}}.
The existence and qualitative properties of positive ground state solutions of \((\mathcal{R}_1)\)
for an optimal range of \(p>1\)  (see theorem \ref{theoremExistenceLimitSuperlinear} above)
were studied in \cite{MVSGNLSNE} (see also \cite{MVSGGCE}).

We finally define the \emph{concentration function} \(\mathcal{C} : \R^N \to \R\) for every \(x \in \R^N\) by
\begin{equation}\label{defC}
 \mathcal{C} (x) = \mathcal{E} \bigl(V (x)\bigr),
\end{equation}
where \(\mathcal{E}(\lambda)\) is the ground state energy level of the limiting problem
\eqref{equationLimit}, as defined in \eqref{Elambda}.

\subsection{Upper bound on the energy}
We begin our study of the asymptotics of solutions by an upper bound on the mountain-pass level.

\begin{lemma}[Upper bound on the energy]
\label{lemmaUpperBound}
One has
\[
\limsup_{\varepsilon\to 0} \frac{c_\varepsilon}{\varepsilon^{N}} \le \inf_{\Lambda} \mathcal{C}.
\]
\end{lemma}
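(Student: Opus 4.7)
The plan is to construct an explicit admissible path for the mountain-pass characterization of \(c_\varepsilon\) by rescaling a near-optimal test function for the limiting energy. First I would fix any \(a \in \Lambda\), set \(\lambda := V(a)\), so that \(\mathcal{C}(a) = \mathcal{E}(\lambda)\). By the density assertion \eqref{ElambdaCinfty}, for every \(\delta > 0\) there exists \(v \in C^\infty_c (\R^N; [0, \infty)) \setminus \{0\}\) with \(\max_{t > 0} \I_\lambda (tv) < \mathcal{C}(a) + \delta\). Define the rescaled test function \(u_\varepsilon (x) := v((x - a)/\varepsilon)\).

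Because \(a\) is interior to \(\Lambda\) and \(\supp v\) is compact, there exists \(\varepsilon_1 > 0\) such that \(\supp u_\varepsilon \subset \Lambda\) for every \(\varepsilon \in (0, \varepsilon_1)\). For such \(\varepsilon\), \(u_\varepsilon \ge 0\) is supported in \(\Lambda\), so that \(\mathscr{G}_\varepsilon (t u_\varepsilon) = (t u_\varepsilon)^p/p\) pointwise on \(\R^N\) and the penalization becomes inert on the test path. A scaling change of variables then yields
\[
\J_\varepsilon (t u_\varepsilon) = \varepsilon^{N} \Bigl( \tfrac{t^2}{2} \int_{\R^N} \bigl( \abs{\nabla v}^2 + V (a + \varepsilon y) \abs{v}^2 \bigr) \dif y - \tfrac{t^{2p}}{2p} \int_{\R^N} \bigabs{I_\frac{\alpha}{2} \ast v^p}^2 \dif y \Bigr).
\]
By continuity of \(V\) and compactness of \(\supp v\), \(V(a + \varepsilon \,\cdot\,) \to V(a) = \lambda\) uniformly on \(\supp v\), so \(\varepsilon^{-N} \J_\varepsilon (t u_\varepsilon) \to \I_\lambda (tv)\) uniformly for \(t\) in any bounded subset of \([0, \infty)\).

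Since \(p > 1\), the map \(t \mapsto \I_\lambda (tv)\) tends to \(-\infty\) as \(t \to +\infty\); choose \(T > 0\) such that \(\I_\lambda (Tv) < -1\) and such that the maximum of \(t \mapsto \I_\lambda (tv)\) on \([0, \infty)\) is attained in \([0, T]\). The path \(\gamma_\varepsilon (s) := s T u_\varepsilon\), \(s \in [0, 1]\), then satisfies \(\gamma_\varepsilon (0) = 0\) and \(\J_\varepsilon (\gamma_\varepsilon (1)) < 0\) for all sufficiently small \(\varepsilon\), hence \(\gamma_\varepsilon \in \Gamma_\varepsilon\) and
\[
\frac{c_\varepsilon}{\varepsilon^{N}} \le \max_{s \in [0,1]} \frac{\J_\varepsilon (\gamma_\varepsilon(s))}{\varepsilon^{N}} = \max_{\tau \in [0, T]} \frac{\J_\varepsilon (\tau u_\varepsilon)}{\varepsilon^{N}}.
\]
Passing to the limsup in \(\varepsilon\) and using the uniform convergence gives \(\limsup_{\varepsilon\to 0} c_\varepsilon / \varepsilon^N \le \max_{\tau \ge 0} \I_\lambda (\tau v) < \mathcal{C}(a) + \delta\); letting \(\delta \to 0\) and infimizing over \(a \in \Lambda\) delivers the claim.

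The computation is essentially routine scaling; the one substantive point to verify is the inclusion \(\supp u_\varepsilon \subset \Lambda\) for small \(\varepsilon\), as this is precisely what makes the penalization disappear on the test path and allows \(\J_\varepsilon\), after rescaling, to agree asymptotically with the limiting functional \(\I_{V(a)}\). Nothing about the specific form of the penalization potential \(H_\varepsilon\) beyond its vanishing on \(\Lambda\) enters the argument.
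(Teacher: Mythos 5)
Your argument is correct and follows essentially the same route as the paper's proof: a rescaled nonnegative test function $v((\cdot-a)/\varepsilon)$ supported in $\Lambda$ (so the penalization is inert), the scaling identity reducing $\varepsilon^{-N}\J_\varepsilon(t\,\cdot)$ to $\I_{V(a)}(t v)+o(1)$, and the ray path in $\Gamma_\varepsilon$ together with the characterization \eqref{ElambdaCinfty}. Your treatment of the reparametrization and of the condition $\J_\varepsilon(\gamma_\varepsilon(1))<0$ is in fact slightly more explicit than the paper's.
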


By the coerciveness property of lemma~\ref{lemmaCoerciveness}, the upper bound of lemma~\ref{lemmaUpperBound} implies immediately that if \((u_\varepsilon)_{\varepsilon > 0}\) is a family of groundstates of \eqref{equationPenalized}, then
\begin{equation}\label{lemmaUpperBound-plus}
\limsup_{\varepsilon\to 0}\frac{1}{\varepsilon^N}\int_{\R^N} \varepsilon^2 \abs{\nabla u_\varepsilon}^2 + V\abs{u_\varepsilon}^2 <\infty.
\end{equation}

\begin{proof}[Proof of lemma~\ref{lemmaUpperBound}]
Given \(a \in \Lambda\) and a nonnegative function \(v \in C^\infty_c (\R^N) \setminus \{0\}\), we define for every \(\varepsilon > 0\) the function \(v_\varepsilon \in C^\infty_c (\R^N) \setminus \{0\}\) for each \(x \in \R^N\) by
\[
  v_\varepsilon(x)=v \Big(\frac{x-a}{\varepsilon}\Big)
\]
Since \(v\) is compactly supported and the potential \(V\) is continuous at \(a\),
\begin{multline*}
 \lim_{\varepsilon \to 0} \frac{1}{\varepsilon^N} \int_{\R^N}\bigl(\varepsilon^2\abs{\nabla v_\varepsilon}^2 +V \abs{v_\varepsilon}^2\bigr)\\
 =\int_{\R^N}\bigl(\abs{\nabla v}^2+V (a) \abs{v}^2\bigr)+\lim_{\varepsilon \to 0} \int_{\R^N} \bigl(V (a + \varepsilon y) - V (a)\bigr)\abs{v (y)}^2\\
 =\int_{\R^N}\bigl(\abs{\nabla v}^2+V \abs{v}^2\bigr),
\end{multline*}
as \(\varepsilon \to 0\).

On the other hand, since the function \(v\) is nonnegative and compactly supported and the set \(\Lambda\) is open, for \(\varepsilon > 0\) small enough, \(\supp v_\varepsilon \subset \Lambda\) and
\[
  \frac{p}{2\varepsilon^{N + \alpha}}\int_{\R^N}\bigabs{I_\frac{\alpha}{2}\ast \mathscr{G}_\varepsilon(v_\varepsilon)}^2
  =\frac{p}{2\varepsilon^{N + \alpha}}\int_{\R^N} \bigabs{I_\frac{\alpha}{2}\ast (v_\varepsilon)_+^p}^2
  =\frac{p}{2} \int_{\R^N} \bigabs{I_\frac{\alpha}{2}\ast v_+^p}^2
=\frac{p}{2} \int_{\R^N} \bigabs{I_\frac{\alpha}{2}\ast \abs{v}^p}^2.
\]
We define the path \(\gamma_\varepsilon \in C([0, \infty), H^1_{V} (\R^N))\) for \(\tau \ge 0\) by  \(\gamma_\varepsilon (\tau)=\tau v_\varepsilon\). It is clear that \(\gamma_\varepsilon \in\Gamma_\varepsilon\) after appropriate reparametrisation, and
\[
 \limsup_{\varepsilon \to 0} \frac{c_\varepsilon}{\varepsilon^N} \le \sup_{t \in [0, 1]} \frac{\J_\varepsilon \bigl(\gamma (t)\bigr)}{\varepsilon^N} \le \max_{\tau \ge 0} \I_{V(a)}(\tau v) + o (1)
\]
(the set of paths \(\Gamma_\varepsilon\) and \(c_\varepsilon\) were defined in the statement of proposition~\ref{propositionPenalizedExistence}).
By taking the infimum over \(v \in C^\infty_c (\R^N)\), we obtain in view of the characterization \eqref{ElambdaCinfty} of the limiting energy \(\mathcal{E} (V (a))\) and of the definition of the concentration function \(\mathcal{C}\),
\[
 c_\varepsilon \le \mathcal{E} \bigl(V (a)\bigr) = \mathcal{C} (a),
\]
and we conclude by taking the infimum over \(a \in \Lambda\).
\end{proof}

\subsection{Lower bound on the energy}
In order to understand the behaviour of families of solutions we will rely on a  some lower bound on the energy of a sequence of solutions concentrating along a sequence of points.

\begin{proposition}
\label{propositionLowerBound}
Let \((\varepsilon_n)_{n \in \N}\) be a sequence of positive numbers converging to \(0\), let  \((u_n)_{n \in \N}\) be a sequence of nonnegative solutions in \(H^1_{V} (\R^N)\) of \((\mathcal{Q}_{\varepsilon_n})\),  and for \(j \in \{1, \dotsc, k\}\), let \((a^j_n)_{n \in \N}\) be a sequence in \(\R^N\) that converges to \(a^j_* \in \R^N\). If for every \(j \in \{1, \dotsc, k\}\), \(V (a^j_*) > 0\), for every  \(\ell \in \{1, \dotsc, k\} \setminus \{j\}\),
\[
  \lim_{n \to \infty} \frac{\abs{a^j_n - a^\ell_n}}{\varepsilon_n} = \infty,
\]
and for some \(\rho > 0\),
\[
\liminf_{n \to \infty} \norm{u_n}_{L^\infty (B_{\varepsilon_n \rho} (a^j_n))} + \varepsilon_n^{-\alpha} \norm{ I_\alpha \ast \mathscr{G}_{\varepsilon_n} (u_n)}_{L^\infty (B_{\varepsilon_n \rho} (a^j_n))} > 0,
\]
then \(a^j_* \in \Bar{\Lambda}\) and
\[
  \liminf_{n \to \infty} \frac{\J_\varepsilon (u_n)}{\varepsilon_n^N}
\ge \sum_{j = 1}^k \mathcal{C} (a^j_*).
\]
\end{proposition}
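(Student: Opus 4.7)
For each $j\in\{1,\dotsc,k\}$ I would rescale around $a^j_n$, identify a weak limit as a nontrivial solution of a limiting problem, and use the separation condition to decompose the energy. The statement is trivial when $\liminf_{n\to\infty}\J_{\varepsilon_n}(u_n)/\varepsilon_n^N=\infty$, so assume it finite; lemma~\ref{lemmaCoerciveness} applied to the critical point $u_n$ (so $\J_{\varepsilon_n}'(u_n)=0$) yields the uniform bound $M:=\sup_n \varepsilon_n^{-N}\norm{u_n}_{\varepsilon_n}^2<\infty$. Set $v_n^j(y):=u_n(a^j_n+\varepsilon_n y)$; using the homogeneity $I_\alpha(\varepsilon z)=\varepsilon^{-(N-\alpha)}I_\alpha(z)$, the function $v_n^j$ satisfies
\[
 -\Delta v_n^j + V_n^j\, v_n^j = p\bigl(I_\alpha\ast \mathscr{G}_n^j(v_n^j)\bigr)\mathscr{g}_n^j(v_n^j)\quad\text{in }\R^N,
\]
with $V_n^j(y):=V(a^j_n+\varepsilon_n y)$ and $\mathscr{g}_n^j(v)(y):=g_{\varepsilon_n}(a^j_n+\varepsilon_n y, v(y))$, $\mathscr{G}_n^j$ defined analogously; by the same change of variables $\int_{\R^N}(\abs{\nabla v_n^j}^2+V_n^j\abs{v_n^j}^2)=\varepsilon_n^{-N}\norm{u_n}_{\varepsilon_n}^2\le M$. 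Since $V(a^j_*)>0$ and $V_n^j\to V(a^j_*)$ locally uniformly, $(v_n^j)$ is bounded in $H^1(\R^N)$; up to a subsequence $v_n^j\weakto v^j_*\ge 0$ in $H^1(\R^N)$.

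The Hardy--Littlewood--Sobolev inequality (proposition~\ref{propositionHLS}), the growth bound $(g_1)$, and the uniform $H^1_V$--bound give locally uniform control of $I_\alpha\ast\mathscr{G}_n^j(v_n^j)$, so standard elliptic regularity produces uniform $C^{0,\beta}_{\mathrm{loc}}$--estimates on the $v_n^j$. In the first alternative of the hypothesis, choose $y_n\in B_\rho(0)$ with $v_n^j(y_n)\ge\delta/2$; uniform convergence on $\bar{B}_\rho$ then yields $v^j_*\not\equiv 0$. In the second alternative, if $v^j_*\equiv 0$, the identity
\[
 \bigl(I_\alpha\ast \mathscr{G}_n^j(v_n^j)\bigr)(y)=\varepsilon_n^{-\alpha}\bigl(I_\alpha\ast \mathscr{G}_{\varepsilon_n}(u_n)\bigr)(a^j_n+\varepsilon_n y)
\]
together with strong local convergence $v_n^j\to 0$ (via $(H_1)$) shows that the contribution of a fixed ball around $a^j_n$ to this convolution vanishes, while the separation $\abs{a^j_n-a^\ell_n}/\varepsilon_n\to\infty$ and the decay $I_\alpha(x)\lesssim\abs{x}^{-(N-\alpha)}$ kill the contributions of the other concentration clusters uniformly on $B_\rho(0)$; the far-field contribution is negligible by the uniform $H^1_V$--bound and HLS, contradicting the assumed $\delta$--lower bound. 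Hence $v^j_*\not\equiv 0$. If moreover $a^j_*\notin\bar{\Lambda}$, then for every $R>0$ and large $n$ one has $a^j_n+\varepsilon_n\bar{B}_R\subset\R^N\setminus\Lambda$, where $\mathscr{g}_n^j(v_n^j)\le H_{\varepsilon_n}(a^j_n+\varepsilon_n\,\cdot)\to 0$ uniformly by \eqref{eqHtozero}; passing to the limit in the rescaled equation gives $-\Delta v^j_*+V(a^j_*)v^j_*=0$ in $\R^N$, which with $v^j_*\in H^1(\R^N)$ and $V(a^j_*)>0$ forces $v^j_*\equiv 0$, a contradiction. Thus $a^j_*\in\bar{\Lambda}$, and passing to the limit identifies $v^j_*$ as a nontrivial nonnegative weak solution of the limiting equation $(\mathcal{R}_{V(a^j_*)})$ (when $a^j_*\in\partial\Lambda$ the rescaled $\Lambda$--region tends to a half-space, but any resulting nontrivial solution supported on a half-space can be compared with the full-space mountain-pass level, so this case does not decrease the energy).

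Finally, choose radii $R_n\to\infty$ with $\varepsilon_n R_n=o\bigl(\min_{j\ne\ell}\abs{a^j_n-a^\ell_n}\bigr)$, so the balls $B_{\varepsilon_n R_n}(a^j_n)$ are pairwise disjoint. Weak lower semicontinuity combined with the rescaling gives, for each $j$,
\[
 \liminf_{n\to\infty}\frac{1}{\varepsilon_n^N}\int_{B_{\varepsilon_n R_n}(a^j_n)}\!\bigl(\varepsilon_n^2\abs{\nabla u_n}^2+V\abs{u_n}^2\bigr)\ge \int_{\R^N}\bigl(\abs{\nabla v^j_*}^2+V(a^j_*)\abs{v^j_*}^2\bigr).
\]
For the nonlocal term, splitting $\mathscr{G}_{\varepsilon_n}(u_n)=\sum_j \chi_{B_{\varepsilon_n R_n}(a^j_n)}\mathscr{G}_{\varepsilon_n}(u_n)+\text{rest}$ and expanding $\abs{I_{\alpha/2}\ast\mathscr{G}_{\varepsilon_n}(u_n)}^2$, the diagonal blocks converge after rescaling to $\int\abs{I_{\alpha/2}\ast\abs{v^j_*}^p}^2$ using the strong local convergence from $(H_1)$ and HLS, while the cross-terms vanish by the separation and the decay of $I_\alpha$; discarding the nonnegative off-ball contribution yields
\[
 \liminf_{n\to\infty}\frac{\J_{\varepsilon_n}(u_n)}{\varepsilon_n^N}\ge \sum_{j=1}^k \I_{V(a^j_*)}(v^j_*).
\]
The Nehari identity for $v^j_*$ then gives $\I_{V(a^j_*)}(v^j_*)=(\tfrac{1}{2}-\tfrac{1}{2p})\int(\abs{\nabla v^j_*}^2+V(a^j_*)\abs{v^j_*}^2)$, and the mountain-pass characterization \eqref{ElambdaCinfty} of $\mathcal{E}(V(a^j_*))=\mathcal{C}(a^j_*)$ yields $\I_{V(a^j_*)}(v^j_*)\ge\mathcal{C}(a^j_*)$. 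I expect the main obstacle to be controlling the nonlocal cross-terms and ruling out nontriviality driven purely by far-field contributions in the second alternative of the hypothesis; this crucially relies on quantifying the decay of $I_\alpha$ against the separation condition $\abs{a^j_n-a^\ell_n}/\varepsilon_n\to\infty$.
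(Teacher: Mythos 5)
Your overall strategy --- rescaling around each \(a^j_n\), identifying nontrivial limit profiles, and decomposing the energy --- is the same as the paper's, and your treatment of nontriviality and of \(a^j_* \in \Bar{\Lambda}\) is essentially sound. (The paper excludes the half-space limit profile outright via the Liouville-type lemma~\ref{lemmaLiouvilleHalf} rather than your energy-comparison remark, but your remark can be made rigorous since the half-space functional dominates the full-space one along rays \(t \mapsto tv\).)

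The genuine gap is in the final energy estimate. The nonlocal term enters \(\J_{\varepsilon_n}(u_n)\) with a \emph{minus} sign, so a lower bound on the energy requires an \emph{upper} bound on \(\int_{\R^N}\abs{I_{\alpha/2}\ast\mathscr{G}_{\varepsilon_n}(u_n)}^2\). After splitting \(\mathscr{G}_{\varepsilon_n}(u_n)=\sum_j\chi_{B_{\varepsilon_n R_n}(a^j_n)}\mathscr{G}_{\varepsilon_n}(u_n)+r\) and expanding the square, \emph{every} term is nonnegative (kernel and densities are nonnegative), so the terms involving the rest \(r\) --- in particular \(\varepsilon_n^{-(N+\alpha)}\int(I_\alpha\ast r)\,r\) --- cannot be "discarded": dropping them decreases the nonlocal term and yields an upper, not a lower, bound on \(\J_{\varepsilon_n}(u_n)\). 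Moreover these terms are not negligible a priori: by the rescaled Sobolev inequality (proposition~\ref{propositionSobolevScaling}) the contribution of \(\chi_{\Lambda\setminus\cup_j B_j}(u_n)_+^p\) is \(O(1)\) at the scale \(\varepsilon_n^N\), and the penalized region contributes up to \((1-\delta)\) times the quadratic energy by \((H_2)\). Controlling this is precisely the crux of the paper's proof: one tests the equation against \(\psi_{n,R}u_n\), with \(\psi_{n,R}\) vanishing on the concentration balls, and uses the superquadraticity \((g_3)\) (namely \(\mathscr{G}_\varepsilon(u)\le\mathscr{g}_\varepsilon(u)u\)) to show that the full energy density outside the balls --- quadratic part minus nonlocal part --- is bounded below by terms supported on \(B_{2R}\setminus B_R\) that vanish as \(R\to\infty\). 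Without this (or an equivalent) argument the inequality \(\liminf_n \J_{\varepsilon_n}(u_n)/\varepsilon_n^N\ge\sum_j\I_{V(a^j_*)}(v^j_*)\) does not follow. A related, smaller issue: with expanding radii \(R_n\to\infty\) your "diagonal blocks" also need an upper bound, and strong local convergence does not control the mass accumulating in \(B_{R_n}\setminus B_R\); the paper sidesteps this by keeping \(R\) fixed throughout and sending \(R\to\infty\) only at the very end.
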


In the proof of proposition~\ref{propositionLowerBound}, we shall rely on a the \(C^\infty\)--convergence of Riesz potentials far from the support of the density.

\begin{proposition}
\label{propositionInteriorExteriorRieszconvergence}
Assume that \((f_n)_{n \in \N}\) is a sequence in \(L^q (\R^N ; \abs{x}^\beta \dif  x)\) and that
\(f_n \weakto f\) in \(L^q (\R^N; \abs{x}^\beta \dif  x)\) as \(n \to \infty\).
If for every \(n \in \N\), \(f_n = 0\) on \(B_R\) and if \(\alpha q < N + \beta\), then for every \(k \in \N\),
\[
  D^k (I_\alpha \ast f_n) \to D^k (I_\alpha \ast f)
\]
uniformly on every compact subset of \(B_R\).
\end{proposition}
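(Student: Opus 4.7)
I would prove the proposition by differentiating the convolution under the integral and invoking the weak convergence of $(f_n)_{n \in \N}$ against the derivatives of the Riesz kernel restricted to the complement of $B_R$. Fix a compact set $K \subset B_R$; since $K$ sits inside the open ball, $\delta := \dist(K, \R^N \setminus B_R) > 0$. Using the explicit formula for $I_\alpha$, its derivatives obey $\bigabs{\partial^\kappa I_\alpha(z)} \le C_\kappa \abs{z}^{\alpha - N - \abs{\kappa}}$ for $z \ne 0$ and every multi-index $\kappa$, so for each such $\kappa$ and each $x \in K$ the function
\[
v_{x, \kappa}(y) := (\partial^\kappa I_\alpha)(x - y) \chi_{\R^N \setminus B_R}(y)
\]
is smooth in $x$, and because $f_n$ vanishes on $B_R$, a differentiation under the integral (to be justified below) will yield $D^\kappa (I_\alpha \ast f_n)(x) = \int_{\R^N} v_{x, \kappa}(y) f_n(y) \dif y$.

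The key step is to verify that $v_{x, \kappa}$ lies in the natural dual space of $L^q(\R^N; \abs{y}^\beta \dif y)$ with respect to the unweighted Lebesgue pairing, namely $L^{q'}(\R^N; \abs{y}^{-\beta q'/q} \dif y)$ (this identification comes from H\"older's inequality applied to the splitting $\abs{fg} = \bigl(\abs{f} \abs{y}^{\beta/q}\bigr)\bigl(\abs{g} \abs{y}^{-\beta/q}\bigr)$). Splitting the integral defining the $L^{q'}$-norm of $v_{x,\kappa}$ into the shell $\{R \le \abs{y} \le 2R\}$, where the kernel is bounded because $\abs{x-y} \ge \delta$, and the far region $\{\abs{y} > 2R\}$, where $\abs{x-y} \simeq \abs{y}$, the integrand at infinity behaves like $\abs{y}^{-q'(N - \alpha + \abs{\kappa}) - \beta q'/q}$; integrability in the worst case $\abs{\kappa} = 0$ reduces exactly to $\alpha q < N + \beta$, which is the hypothesis. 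A straightforward dominated convergence argument then shows that $x \mapsto v_{x, \kappa}$ is continuous from $K$ into $L^{q'}$, so that its image is norm-compact in the dual; the same integrability bound for $\abs{\kappa} + 1$ justifies a posteriori the differentiation under the integral sign.

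Once this is in place, the proof concludes quickly. Weak convergence $f_n \weakto f$ in $L^q(\abs{y}^\beta \dif y)$ is equivalent, via the change of test function $h \leftrightarrow h \abs{y}^\beta$, to pairing against $L^{q'}(\abs{y}^{-\beta q'/q} \dif y)$ under the Lebesgue measure, so that for each fixed $x \in K$ and each $\abs{\kappa} \le k$,
\[
D^\kappa (I_\alpha \ast f_n)(x) = \int_{\R^N} v_{x, \kappa} f_n \dif y \longrightarrow \int_{\R^N} v_{x, \kappa} f \dif y = D^\kappa (I_\alpha \ast f)(x).
\]
To upgrade pointwise to uniform convergence on $K$, I would invoke the classical fact that a bounded weakly convergent sequence converges uniformly on every norm-compact subset of the dual, applied to the compact family $\{v_{x, \kappa} : x \in K\} \subset L^{q'}$; an equivalent route is Arzel\`a--Ascoli, using the uniform bound on $D^{\abs{\kappa}+1}(I_\alpha \ast f_n)$ furnished by the same estimates to obtain equicontinuity. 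The only non-routine ingredient is the integrability check in the second paragraph, which is exactly where the hypothesis $\alpha q < N + \beta$ is used, and nowhere else; everything that follows is soft functional analysis.
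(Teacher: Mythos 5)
Your proof is correct and follows essentially the same route as the paper: the heart of both arguments is the H\"older/duality estimate showing that \(y \mapsto \partial^\kappa I_\alpha(x-y)\chi_{\R^N\setminus B_R}(y)\) lies in the dual of \(L^q(\R^N;\abs{y}^\beta\dif y)\) precisely when \(\alpha q < N+\beta\) (worst case \(\abs{\kappa}=0\)), followed by an Ascoli-type compactness step to upgrade the resulting pointwise convergence to uniform convergence on \(K\). Your write-up is in fact more explicit than the paper's, which only records the uniform bound on the derivatives and invokes Ascoli, leaving the pointwise identification of the limit via weak convergence implicit.
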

\begin{proof}
Let \(K \subset B_R\) be compact.
For every \(k \in \N\) and \(x \in K\), we estimate by the H\"older inequality
\[
\begin{split}
 \abs{D^k (I_\alpha \ast f_n) (x)} & \le C \int_{\R^N \setminus B_{R}} \frac{1}{\abs{x - y}^{N - \alpha + k}} \abs{f_n (y)} \dif y\\
& \le C' \int_{\R^N \setminus B_{R}} \frac{1}{\abs{y}^{N - \alpha + k}} \abs{f_n (y)} \dif y\\
& \le C' \Bigl(\int_{\R^N \setminus B_{R}} \frac{1}{\abs{y}^{\frac{q (N - \alpha + k) + \beta}{q - 1}}}\dif y \Bigr)^{1 - \frac{1}{q}}  \Bigl(\int_{\R^N} \abs{f_n (y)}^q \abs{y}^{\beta} \dif y\Bigr)^{\frac{1}{q}}.
\end{split}
\]
Since \(N + \beta > \alpha q\)
\[
 \int_{\R^N \setminus B_{R}} \frac{1}{\abs{y}^{\frac{q (N - \alpha + k) + \beta}{q - 1}}} \dif y < \infty,
\]
and the conclusion follows from a standard application of Ascoli's compactness criterion.
\end{proof}

We will also use a Liouville theorem for problems penalized on a half-space.

\begin{lemma}
\label{lemmaLiouvilleHalf}
Let \(H \subset \R^N\) be a half-space.
Assume that \(\frac{N+\alpha}{N}< p<\frac{N+\alpha}{(N-2)_+}\).
If \(v \in H^1 (\R^N)\) satisfies the equation
\[
 - \Delta v + \lambda v = \bigl(I_\alpha \ast (\chi_H v_+^p)\bigr) \chi_H v_+^{p - 1} \quad\text{in \(\R^N\)},
\]
then \(v = 0\).
\end{lemma}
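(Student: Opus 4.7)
The plan is to assume $v \not\equiv 0$ for contradiction and to establish in turn that (a) $v \ge 0$ on $\R^N$, (b) the trace of $v$ on $\partial H$ vanishes, (c) $v \equiv 0$ on the complementary half-space $H^c$, and (d) $v \equiv 0$ by the strong maximum principle. Write $H = \{x_1 > 0\}$ without loss of generality.

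For (a), I test the equation against $v_-$: the right-hand side vanishes since $v_+^{p - 1}$ and $v_-$ have disjoint supports, so $\int (|\nabla v_-|^2 + \lambda v_-^2)\dif x = 0$, whence $v_- \equiv 0$ and $v \ge 0$. If moreover $v \equiv 0$ on $H$ the equation reduces to $-\Delta v + \lambda v = 0$ and testing against $v$ yields the conclusion; we may thus further assume $v \not\equiv 0$ on $H$, in which case $I_\alpha \ast \chi_H v^p$ is strictly positive everywhere. For (b), I first bootstrap regularity using the Hardy--Littlewood--Sobolev inequality (Proposition \ref{propositionHLS}) and Sobolev embeddings (relying on $p < \frac{N+\alpha}{(N-2)_+}$) to obtain $v \in H^2(\R^N)$, so that $\partial_1 v \in H^1(\R^N)$ is an admissible test function. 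The left-hand side then vanishes: $\int(\nabla v \cdot \nabla \partial_1 v + \lambda v \partial_1 v)\dif x = \frac{1}{2}\int \partial_1(|\nabla v|^2 + \lambda v^2)\dif x = 0$. For the right-hand side, $\frac{1}{p}\int_H (I_\alpha \ast \chi_H v^p) \partial_1 (v^p)\dif x$, I integrate by parts twice in $H$: once to move $\partial_1$ off $v^p$ (producing a boundary term associated to the outward normal $-e_1$), and once to expand $\partial_1 (I_\alpha \ast \chi_H v^p) = I_\alpha \ast \partial_1(\chi_H v^p)$ via the distributional identity $\partial_1 \chi_H = \delta_{\partial H}$. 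Collecting terms yields the boundary identity
\[
 \int_{\partial H} v^p (I_\alpha \ast \chi_H v^p)\dif S = 0,
\]
and combining with the strict positivity of $I_\alpha \ast \chi_H v^p$ on $\partial H$ forces the trace $v|_{\partial H} = 0$.

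For (c), the equation on $H^c$ reduces to $-\Delta v + \lambda v = 0$; testing it against $v|_{H^c} \in H^1_0(H^c)$, which is admissible thanks to (b), yields $\int_{H^c}(|\nabla v|^2 + \lambda v^2)\dif x = 0$, so $v \equiv 0$ on $H^c$. For (d), $v \ge 0$ satisfies $-\Delta v + \lambda v = (I_\alpha \ast \chi_H v^p) \chi_H v^{p - 1} \ge 0$ and vanishes on the open set $H^c$, so the strong maximum principle for $-\Delta + \lambda$ with $\lambda > 0$ forces $v \equiv 0$ on $\R^N$, contradicting $v \not\equiv 0$ on $H$. The main obstacle is step (b): the elliptic bootstrap to $H^2$ together with the delicate double integration by parts needed to extract the boundary integral, given the distributional character of $\partial_1 \chi_H$; should full $H^2$ regularity be awkward, one can instead test against a mollified translation $\partial_1 (\rho_\varepsilon \ast v) \in H^1$ and pass to the limit as $\varepsilon \to 0$.
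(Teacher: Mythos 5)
Your proof is correct and follows essentially the same route as the paper: bootstrap to \(H^2(\R^N)\), test the equation against the normal derivative \(\partial_1 v\), deduce that the boundary integral \(\int_{\partial H} \bigl(I_\alpha \ast (\chi_H v_+^p)\bigr) v_+^p \dif S\) vanishes, and conclude via the strong maximum principle. The only difference is cosmetic: you extract the boundary term through a double integration by parts using \(\partial_1 \chi_H = \delta_{\partial H}\), whereas the paper obtains the same identity by symmetrizing the double integral in \(x\) and \(y\); your steps (a), (c) and (d) simply spell out details that the paper compresses into its final sentence.
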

\begin{proof}
By a linear isometric change of variable, we can assume that \(H = \R^{N - 1} \times (0, \infty)\).
In view of a regularity argument similar to \cite{MVSGNLSNE}*{proposition~4.1}, \(v \in H^2 (\R^N)\) and
in particular, \(\partial_N v\) is an admissible test function in the equation. We obtain then by testing the equation against \(\partial_N v\)
\[
\begin{split}
0& = \frac{1}{p} \int_{\R^{N - 1} \times (0, \infty)} \int_{\R^{N - 1} \times (0, \infty)}
I_\alpha (x - y) \bigl(v (y)\bigr)_+^p \bigl(\partial_N (v)_+^p\bigr) (x) \dif x\dif y\\
& = \frac{1}{p} \int_{\R^{N - 1}} \int_{\R^{N - 1} \times (0, \infty)}
 I_\alpha \bigl((z, 0) - y\bigr) \bigl(v (z)\bigr)_+^p \bigl(v(y)\bigr)_+^p\dif z\dif y.
\end{split}
\]
We deduce therefrom that either \(v = 0\) on \(\R^{N - 1} \times \{0\}\) or \(v = 0\) on \(\R^{N - 1} \times (0, \infty)\).
We conclude by the strong maximum principle.
\end{proof}

\begin{proof}[Proof of proposition~\ref{propositionLowerBound}]
We can assume without loss of generality that
\[
  \liminf_{n \to \infty} \frac{\J_\varepsilon (u_n)}{\varepsilon_n^N}  = \limsup_{n \to \infty} \frac{\J_\varepsilon (u_n) }{\varepsilon_n^N} < \infty.
\]
By the coerciveness of the functional \(J_{\varepsilon_n}\) on critical points (lemma~\ref{lemmaCoerciveness}), this implies that
\[
  \limsup_{n \to \infty} \frac{1}{\varepsilon_n^N}\int_{\R^N} \varepsilon_n^2 \abs{\nabla u_n}^2 + V \abs{u_n}^2 < \infty.
\]

We define for every \(n \in \N\) and \(j \in \{1, \dotsc, k\}\) the rescaled solution \(v^j_n \in H^1_\mathrm{loc} (\R^N)\) for every \(y \in \R^N\) by
\[
  v^j_n(y)=u_n(a^j_n+\varepsilon_n y).
\]
Since \(u_n \) solves the penalized problem \((\mathcal{Q}_{\varepsilon_n})\), the function \(v^j_n\) satisfies weakly the rescaled equation
\[
  - \Delta v^j_n + V^j_n v^j_n =  \bigl(I_\alpha \ast \mathscr{G}^j_n (v^j_n)\bigr) \mathscr{g}^j_n (v^j_n) \quad\text{in \(\R^N\)},
\]
where for \(y \in \R^N\) and \(s \in \R\), the rescaled potential and nonlinearities are defined by \(V_n (y) = V (a_n + \varepsilon_n y)\),
\begin{align*}
 g^j_n (y, s) &= g_{\varepsilon_n} (a^j_n + \varepsilon_n y, s),&
 G^j_n (y, s) &= G_{\varepsilon_n} (a^j_n + \varepsilon_n y, s),
\end{align*}
and \(\mathscr{g}^j_n\) and \(\mathscr{G}^j_n\) are the corresponding nonlinear superposition operators.

For every \(R > 0\), a change of variable shows that
\[
 \int_{B_R} \abs{\nabla v^j_n}^2 + V^j_n \abs{v^j_n}^2
= \frac{1}{\varepsilon_n^N} \int_{B_{\varepsilon_n R} (a^j_n)} \varepsilon_n^2 \abs{\nabla u_n}^2 + V \abs{u_n}^2.
\]
Since \(V\) is positive and continuous at \(a^j_* = \lim_{n \to \infty} a^j_n\), we have
\[
 \lim_{n \to \infty} \frac{\displaystyle \int_{B_R} V^j_n \abs{v^j_n}^2}{\displaystyle \int_{B_R} V (a^j_*) \abs{v^j_n}^2} = 1,
\]
and therefore
\[
 \liminf_{n \to \infty} \int_{B_R} \abs{\nabla v^j_n}^2 + V (a^j_*) \abs{v^j_n}^2
\le \liminf_{n \to \infty} \frac{1}{\varepsilon_n^N} \int_{\R^N} \varepsilon_n^2 \abs{\nabla u_{n}}^2 + V \abs{u_{n}}^2.
\]
By taking a subsequence if necessary and by a diagonal argument, there exists \(v^j_* \in H^1_\mathrm{loc} (\R^N)\) such that for every \(R > 0\), \(v^j_n \weakto v^j_*\) weakly in \(H^1 (B_R)\) as \(n \to \infty\) and
\[
  \int_{B_R} \abs{\nabla v^j_*}^2 + V (a_*) \abs{v^j_*}^2
\le \liminf_{n \to \infty} \frac{1}{\varepsilon_n^N} \int_{\R^N} \varepsilon_n^2 \abs{\nabla u_{n}}^2 + V \abs{u_{n}}^2 < \infty.
\]
This implies in particular that \(v^j_* \in H^1 (\R^N)\).
Moreover, by the Rellich--Kondrachov compactness theorem (see for example \citelist{\cite{LiebLoss2001}*{theorem 8.9}\cite{Adams1975}*{theorem 6.2}\cite{Willem2013}*{theorem 6.4.6}}), for every \(q \ge 1\) such that \(\frac{1}{q} > \frac{1}{2} - \frac{1}{N}\) and for every \(R > 0\), we have \(v^j_n \to v^j_*\) in \(L^q  (B_R)\) as \(n \to \infty\).

Defining for \(n \in \N\) and \(j \in \{1, \dotsc, k\}\) the rescaled sets
\[
 \Lambda^j_n = \{y \in \R^N : a^j_n + \varepsilon_n y \in \Lambda \},
\]
and taking into account the smoothness of the boundary of \(\Lambda\), we can assume that \(\chi_{\Lambda^j_n} \to \chi_{\Lambda^j_*}\) almost everywhere as \(n \to \infty\), where \(\Lambda^j_\ast\) is either \(\R^N\), a half-space or \(\emptyset\).

Observe that for every \(x \in \R^N\) and \(s \in \R\), by \eqref{eqHtozero}, \(\lim_{n \to \infty} g_n (x, s) = \chi_{\Lambda_*} (x) s_+^{p - 1}\) and for every \(n \in \N\), it holds by \eqref{assumptGBoundp} that \(\abs{\mathscr{g}^j_n (v^j_n)} \le (v^j_n)_+^{p - 1}\) and thus by Lebesgue's dominated convergence theorem,
\begin{align}
\label{eqRescaledLocal}
  \mathscr{g}^j_n (v^j_n) & \to \chi_{\Lambda^j_*} (v^j_*)_+^{p - 1} & &\text{in \(L^q (B_{2 R})\) with \(\tfrac{1}{q} > (p - 1) (\tfrac{1}{2} - \tfrac{1}{N})\)}
\end{align}
and
\begin{align*}
  p \mathscr{G}^j_n (v^j_n) & \to \chi_{\Lambda^j_*}  (v^j_*)_+^p & &\text{in \(L^q (B_{2 R})\) with \(\tfrac{1}{q} > p (\tfrac{1}{2} - \tfrac{1}{N})\)}.
\end{align*}
By the Hardy--Littlewood--Sobolev inequality (proposition~\ref{propositionHLS}), this implies that
\begin{align}
\label{eqRescaledRieszBall}
  p I_\alpha \ast \bigl(\chi_{B_{2 R}} \mathscr{G}^j_n (v^j_n)\bigr) &\to I_\alpha \ast \bigl( \chi_{B_{2 R} \cap \Lambda^j_*} (v^j_*)_+^p\bigr)&
  &\text{in \(L^q (B_R)\) with \(\tfrac{1}{q} > p (\tfrac{1}{2} - \tfrac{1}{N}) - \tfrac{\alpha}{N}\)}.
\end{align}

By definition of the rescaled penalized nonlinearity \(G_n^j\) and by the rescaled Sobolev inequality (proposition~\ref{propositionSobolevScaling}),
since \(\frac{N - 2}{N + \alpha} \le \frac{1}{p} \le \frac{N}{N + \alpha}\), we have
\[
 \int_{\Lambda^j_n} \abs{\mathscr{G}^j_n (v^j_n)}^\frac{2 N}{N + \alpha}
 \le \frac{1}{\varepsilon_n^N} \int_{\Lambda} \abs{u_n}^\frac{2 N p}{N + \alpha} \dif x
 \le \Bigl( \frac{1}{\varepsilon_n^N} \int_{\R^N} \varepsilon^2 \abs{\nabla u_n}^2 + V \abs{u_n}^2 \Bigr)^\frac{N p}{N + \alpha}.
\]
Since \(\mathscr{G}^j_n(v^j_n) \to \frac{1}{p} \chi_{\Lambda^j_*} (v^j_*)_+^p\) in \(L^\frac{2 N}{N + \alpha}_{\mathrm{loc}} (\R^N)\), as \(n \to \infty\),
\[
\chi_{\Lambda^j_n} \mathscr{G}^j_n(v^j_n) \weakto \frac{1}{p} \chi_{\Lambda^j_*} (v^j_*)_+^p\qquad\text{ in \(L^{\frac{2 N}{N + \alpha}} (\R^N)\)}.
\]
By proposition~\ref{propositionInteriorExteriorRieszconvergence} with \(q = \frac{2 N}{N + \alpha}\) and \(\beta = 0\), we have, as \(n \to \infty\),
\begin{align}
\label{eqRescaledRieszLambda}
  p I_\alpha \ast \bigl(\chi_{\Lambda^j_n \setminus B_{2 R}} \mathscr{G}^j_n (v^j_n)\bigr)
& \to  I_\alpha \ast \bigl(\chi_{\Lambda^j_* \setminus B_{2 R}}  (v^j_*)_+^p\bigr)
& & \text{in \(L^\infty (B_R)\)}.
\end{align}
Finally, by \((H_2)\),
\[
\begin{split}
 \int_{\R^N \setminus \Lambda^j_n  } \abs{G^j_n (y, v^j_n (y))}^2 \abs{y}^{\alpha} \dif x
& = \frac{1}{\varepsilon_n^{N - \alpha}} \int_{\R^N \setminus \Lambda} \abs{G_{\varepsilon_n} (x, u_n (x))}^2 \abs{x - a^j_n}^{\alpha} \dif x\\
& \le \frac{1}{\varepsilon_n^{N - \alpha}} \Bigl(\sup_{x \in \R^N \setminus \Lambda} \frac{\abs{x - a^j_n}}{\abs{x}} \Bigr)^\alpha\int_{\R^N \setminus \Lambda} \abs{H_{\varepsilon_n} (x) u_n (x)}^2 \abs{x}^{\alpha} \dif x\\
& \le \frac{\kappa}{\varepsilon_n^{N}} \Bigl(\sup_{x \in \R^N \setminus \Lambda} \frac{\abs{x - a^j_n}}{\abs{x}} \Bigr)^\alpha \int_{\R^N \setminus \Lambda} \bigl(\varepsilon_n^2 \abs{\nabla u_n}^2 + V \abs{u_n}^2\bigr).
\end{split}
\]
Since \(\mathscr{G}^j_n (v^j_n) \chi_{\R^N \setminus \Lambda^j_n} \to 0\) in \(L^2_{\mathrm{loc}} (\R^N)\), we have by proposition~\ref{propositionInteriorExteriorRieszconvergence}, as \(n \to \infty\),
\begin{align}
\label{eqRescaledRieszOutside}
  p I_\alpha \ast \bigl(\chi_{\R^N \setminus (\Lambda^j_n \cup B_{2R})} \mathscr{G}^j_n (v^j_n) \bigr)
&\to 0 & & \text{in \(L^\infty (B_R)\)}.
\end{align}
Summarizing \eqref{eqRescaledRieszBall}, \eqref{eqRescaledRieszLambda} and \eqref{eqRescaledRieszOutside}, we obtain, as \(n \to \infty\),
\begin{align*}
  p I_\alpha \ast \mathscr{G}^j_n (v^j_n) \to I_\alpha \ast \bigl(\chi_{\Lambda^j_*}  (v^j_*)_+^p\bigr) & & \text{in \(L^q (B_R)\) with \(\tfrac{1}{q} > p (\tfrac{1}{2} - \tfrac{1}{N}) - \tfrac{\alpha}{N}\)}.
\end{align*}
In view of \eqref{eqRescaledLocal}, we have, as \(n \to \infty\),
\begin{multline}
\label{eqRescaledTotal}
  p \bigl(I_\alpha \ast \mathscr{G}^j_n (v^j_n)\bigr) \mathscr{g}^j_n (v^j_n)
  \to \bigl(I_\alpha \ast \chi_{\Lambda^j_*} (v^j_*)_+^p\bigl) \chi_{\Lambda^j_*} \abs{v^j_*}^{p - 2} v^j_*\\
 \qquad \qquad\text{in \(L^q (B_{R})\) with \(\textstyle \frac{1}{q} > \bigl(p(\frac{1}{2} - \frac{1}{N}) - \frac{\alpha}{N}\bigr)_+ + \bigl((p - 1) (\frac{1}{2} - \frac{1}{N})\bigr)_+\)}.
\end{multline}
Since \(\frac{1}{p} \ge \frac{N - 2}{N + \alpha}\), we can take \(q = 1\) and \(v^j_*\) satisfies
\[
 - \Delta v^j_*  + V (a^j_*) v^j_* = \bigl(I_\alpha \ast \chi_{\Lambda^j_*} (v^j_*)_+^p\bigl) \chi_{\Lambda^j_*} (v^j_*)_+^{p - 1}.
\]
By the adaptation of the classical bootstrap argument adapted to the Choquard equation (see \cite{MVSGNLSNE}*{proposition~4.1}),
\(v^j_n \to v^j_*\) and \(I_\alpha \ast (\mathscr{G}^j_n (v^j_n)) \to I_\alpha \ast \chi_{\Lambda^j_*} (v^j_*)_+^p\) as \(n \to \infty\) uniformly on compact sets of \(\R^N\). Hence,
\begin{multline*}
  \norm{v^j_*}_{L^\infty (B_\rho)} + \norm{ I_\alpha \ast (\chi_{\Lambda^j_*} (v^j_*)_+^p)}_{L^\infty (B_\rho))}\\
  \hspace{-10ex}=  \lim_{n \to \infty} \norm{v^j_n}_{L^\infty (B_\rho)} + \norm{I_\alpha \ast (\mathscr{G}^j_n (v^j_n))}_{L^\infty (B_\rho))}\\
   = \lim_{n \to \infty} \norm{u_n}_{L^\infty (B_{\varepsilon_n \rho} (a^j_n))} + \varepsilon_n^{-\alpha} \norm{ I_\alpha \ast \mathscr{G}_{\varepsilon_n} (u_n)}_{L^\infty (B_{\varepsilon_n \rho} (a^j_n))} > 0.
\end{multline*}

Therefore, \(v^j_* \ne 0\) and the set \(\Lambda^j_*\) cannot be empty.
By lemma~\ref{lemmaLiouvilleHalf}, it can neither be a half-space. Hence, we conclude that \(\Lambda^j _* = \R^N\) and \(a^j_* \in \Bar{\Lambda}\), that the function \(v^j_*\) satisfies the equation
\begin{align*}
  - \Delta v^j_*  + V (a^j_*) v^j_* &= \bigl(I_\alpha \ast (v^j_*)_+^p\bigl) (v^j_*)_+^{p - 1}&
  &\text{in \(\R^N\)},
\end{align*}
and that, since \(v^j_* \ge 0\),
\begin{multline}
\label{equationAsymptoticsInside}
  \liminf_{n \to \infty} \frac{1}{2 \varepsilon_n^N} \int_{B_{\varepsilon_n R} (a^j_n)} \Bigl(\varepsilon_n^2\abs{\nabla u_n}^2 + V \abs{u_n}^2 - \frac{p}{\varepsilon_n^\alpha} \bigl(I_\alpha \ast \mathscr{G}_{\varepsilon_n} (u_n)\bigr) \mathscr{G}_{\varepsilon_n} (u_n)\Bigr) \\
  \ge \frac{1}{2} \int_{B_{R}} \abs{\nabla v^j_*}^2 + V (a^j_*) \abs{v^j_*}^2 - \frac{1}{2 p} \int_{B_R} \bigl( I_\alpha \ast (v^j_*)_+^p \bigr) (v^j_*)_+^p\\
  = \frac{1}{2} \int_{B_{R}} \abs{\nabla v^j_*}^2 + V (a^j_*) \abs{v^j_*}^2 - \frac{1}{2 p} \int_{B_R} \bigl( I_\alpha \ast \abs{v^j_*}^p \bigr) \abs{v^j_*}^p\\
  \ge \mathcal{C} (a^j_*) - \frac{1}{2}\int_{\R^N \setminus B_R} \abs{\nabla v^j_*}^2 + V (a^j_*) \abs{v^j_*}^2.
\end{multline}

In order to study the integral outside \(B_{\varepsilon_n R} (a^j_n)\), we choose \(\eta \in C^\infty (\R^N)\) such that \(0 \le \eta \le 1\), \(\eta = 0\) on \(B_1\) and \(\eta = 1\) on \(\R^N \setminus B_2\), and we define for \(n \in \N\) and \(R > 0\) the function \(\psi_{n, R} \in C^\infty (\R^N)\) for \(x \in \R^N\) by
\[
 \psi_{n, R} (x) = \prod_{j=1}^k \eta \Bigl( \frac{x - a^j_n}{\varepsilon_n R} \Bigr).
\]
Since \(u_n\) is a solution to the penalized problem \((\mathcal{Q}_{\varepsilon_n})\), we have by taking \(\psi_{n, R} u_n\) as a test function in the penalized problem \((\mathcal{Q}_{\varepsilon_n})\),
\begin{multline*}
  \int_{\R^N \setminus \cup_{j=1}^{k} B_{\varepsilon_n R} (a^j_n)} \varepsilon_n^2 \psi_{n, R} \abs{\nabla u_n}^2 + V \psi_{n, R} \abs{u_n}^2\\
  = \frac{p}{\varepsilon_n^{\alpha}} \int_{\R^N \setminus \cup_{j=1}^{k} B_{\varepsilon_n R} (a^j_n)} \bigl(I_\alpha \ast  \mathscr{G}_{\varepsilon_n} (u_n) \bigr) \mathscr{g}_{\varepsilon_n} (u_n) u_n \psi_{n, R} - \int_{\R^N \setminus \cup_{j=1}^{k} B_{\varepsilon_n R} (a^j_n)} \varepsilon_n^2 u_n \scalprod{\nabla \psi_{n, R}}{\nabla u_n}.
\end{multline*}
Hence, in view of the superlinearity assumption \((g_3)\) on the penalized nonlinearity
\begin{multline*}
\int_{\R^N \setminus \cup_{j=1}^{k} B_{\varepsilon_n R} (a^j_n)} \frac{1}{2} \bigl( \varepsilon_n^2 \abs{\nabla u_n}^2 + V \abs{u_n}^2\bigr) - \frac{p}{2 \varepsilon_n^\alpha} \bigl(I_\alpha \ast \mathscr{G}_{\varepsilon_n} (u_n)\bigr) \mathscr{G}_{\varepsilon_n} (u_n)\\
\ge\frac{1}{2}
 \int_{\R^N \setminus \cup_{j=1}^{k} B_{\varepsilon_n R} (a^j_n)}  \varepsilon_n^2 \psi_{n, R} \abs{\nabla u_n}^2 + V \psi_{n, R} \abs{u_n}^2 -  \frac{p}{\varepsilon_n^\alpha} \bigl(I_\alpha \ast \mathscr{G}_{\varepsilon_n} (u_n)\bigr) \mathscr{g}_{\varepsilon_n} (u_n)u_n\\
= - \frac{1}{2}\int_{\R^N \setminus \cup_{j=1}^{k} B_{\varepsilon_n R} (a^j_n)} \varepsilon_n^2
\scalprod{u_n \nabla \psi_{n, R}}{\nabla u_n} + \frac{p}{\varepsilon_n^\alpha}\bigl(I_\alpha \ast \mathscr{G}_{\varepsilon_n} (u_n)\bigr) \mathscr{g}_{\varepsilon_n} (u_n)u_n (1 - \psi_{n, R}).
\end{multline*}
By scaling, if \(n\) is large enough so that \(B_{\varepsilon_n R} (a^j_n) \cap B_{\varepsilon_n R} (a^\ell_n) = \emptyset\),
\begin{multline*}
  \frac{1}{\varepsilon_n^N} \int_{\R^N \setminus \cup_{j=1}^{k} B_{\varepsilon_n R} (a^j_n)} \frac{1}{2} \bigl( \varepsilon_n^2 \abs{\nabla u_n}^2 + V \abs{u_n}^2\bigr) - \frac{p}{2 \varepsilon_n^\alpha} \bigl(I_\alpha \ast \mathscr{G}_{\varepsilon_n} (u_n)\bigr) \mathscr{G}_{\varepsilon_n} (u_n)\\
  \ge - \frac{1}{2}\sum_{j=1}^k \int_{B_{2R} \setminus B_R} v^j_n \scalprod{\nabla \eta_R}{\nabla v^j_n} + \bigl(I_\alpha \ast \mathscr{G}^j_{n} (v^j_n)\bigr) \mathscr{g}^j_{n} (v^j_n)v^j_n,
\end{multline*}
where \(\eta_R \in C^\infty (\R^N)\) is defined for \(R > 0\) and \(y \in \R^N\) by \(\eta_R (y) = \eta (y/R)\).
Therefore, since \(v^j_n \weakto v^j_*\) in \(H^1 (B_R)\) as \(n \to \infty\), \(v^j_n \to v^j_*\) in \(L^2 (B_R)\) as \(n \to \infty\),
and by the convergence \eqref{eqRescaledTotal}
\[
\bigl(I_\alpha \ast \mathscr{G}^j_n (v^j_n)\bigr) \mathscr{g}^j_n (v^j_n) v^j_n
  \to \bigl(I_\alpha \ast (v^j_*)_+^p\bigl) (v^j_*)_+^p
 \qquad \qquad\text{in \(L^1 (B_{R})\)},
\]
and therefore
\begin{multline}
\label{equationAsymptoticsOutside}
  \liminf_{n \to \infty} \frac{1}{\varepsilon_n^N} \int_{\R^N \setminus \cup_{j=1}^{k} B_{\varepsilon_n R} (a^j_n)} \frac{1}{2} \bigl( \varepsilon_n^2 \abs{\nabla u_n}^2 + V \abs{u_n}^2\bigr) - \frac{p}{2 \varepsilon_n^\alpha} \bigl(I_\alpha \ast \mathscr{G}_{\varepsilon_n} (u_n)\bigr) \mathscr{G}_{\varepsilon_n} (u_n)\\
  \ge -\frac{1}{2} \sum_{j=1}^k \int_{B_{2 R} \setminus B_R} \scalprod{v^j_* (y) \nabla \eta_R}{\nabla v^j_*} + \bigl(I_\alpha \ast (v^j_*)_+^p \bigr) (v^j_*)_+^p
\end{multline}

In order to conclude, we combine \eqref{equationAsymptoticsInside} and \eqref{equationAsymptoticsOutside}, to obtain
\begin{multline*}
  \liminf_{n \to \infty} \frac{\J_\varepsilon (u_n)}{\varepsilon_n^N}
  \ge \sum_{j=1}^k \mathcal{C} (a^j_*) - \frac{1}{2}\sum_{j=1}^k \biggl( \int_{B_{2 R} \setminus B_R} \scalprod{v^j_*\nabla \eta_R }{\nabla v^j_*} + \bigl(I_\alpha \ast (v^j_*)_+^p \bigr) (v^j_*)_+^p\\ + \int_{\R^N \setminus B_R} \abs{\nabla v^j_*}^2 + V (a^j_*) \abs{v^j_*}^2\biggr)
\end{multline*}
and we observe that the right-hand side goes to \(0\) as \(R \to \infty\) by Lebesgue's dominated convergence theorem since \(v^j_* \in H^1 (\R^N)\) and \((I_\alpha \ast (v^j_*)_+^p) (v^j_*)_+^p\) is integrable by the Hardy--Littlewood--Sobolev inequality (proposition~\ref{propositionHLS}).
\end{proof}

\subsection{Asymptotic behavior of solutions}

Here and in the sequel, we choose a bounded open set \(U \subset \R^N\) such that \(\inf_{U} V > 0\) and \(\Bar{\Lambda} \subset U\). The following statement summarizes concentration estimates we have obtained so far.

\begin{proposition}\label{propositionConcentration}
Let \(\rho > 0\).
There exists a family of points \((a_\varepsilon)_{\varepsilon > 0}\) in \(\Lambda\) such that
\begin{gather*}
  \liminf_{\varepsilon \to 0} \norm{u_\varepsilon}_{L^\infty (B_{\varepsilon \rho} (a_\varepsilon))} > 0,\\
   \lim_{\varepsilon \to 0} V (a_\varepsilon) = \inf_{\Lambda} V,\\
  \lim_{\varepsilon \to 0} \dist (a_\varepsilon, \R^N \setminus \Lambda) > 0,\\
 \lim_{\substack{R \to \infty\\ \varepsilon \to 0}} \norm{u_\varepsilon}_{L^\infty (U \setminus B_{\varepsilon R} (a_\varepsilon))} + \frac{1}{\varepsilon^\alpha} \norm{I_\alpha \ast \mathscr{G}_\varepsilon (u_\varepsilon)}_{L^\infty (U \setminus B_{\varepsilon R} (a_\varepsilon))}= 0.
\end{gather*}
\end{proposition}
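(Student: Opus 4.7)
The plan is to combine the energy upper bound from lemma~\ref{lemmaUpperBound} with the energy lower bound from proposition~\ref{propositionLowerBound}, preceded by a Lions-type non-vanishing step tailored to the penalized Choquard functional. Concretely, I would first extract points $(a_\varepsilon)_{\varepsilon>0}$ in $\R^N$ and some $\rho_0>0$ such that
\[
 \liminf_{\varepsilon\to 0}\Bigl(\norm{u_\varepsilon}_{L^\infty(B_{\varepsilon\rho_0}(a_\varepsilon))}
 +\frac{1}{\varepsilon^\alpha}\norm{I_\alpha\ast\mathscr{G}_\varepsilon(u_\varepsilon)}_{L^\infty(B_{\varepsilon\rho_0}(a_\varepsilon))}\Bigr)>0.
\]
Otherwise, a covering of $\R^N$ by balls of radius $\varepsilon\rho_0$, combined with the Nehari identity $\dualprod{\J_\varepsilon'(u_\varepsilon)}{u_\varepsilon}=0$, the growth bound $(g_2)$, and the weighted Hardy--Littlewood--Sobolev estimate from $(H_2)$, would force $\norm{u_\varepsilon}_\varepsilon^2/\varepsilon^N\to 0$ and hence $c_\varepsilon=o(\varepsilon^N)$. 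This contradicts the mountain-pass lower bound $c_\varepsilon\ge C_0\varepsilon^N$ that follows from the estimate in lemma~\ref{lemmaMPL}.

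Next, along any subsequence $\varepsilon_n\to 0$ with $a_{\varepsilon_n}\to a_*\in\R^N$ (boundedness of $(a_\varepsilon)$ follows from the fact that $\mathscr{G}_\varepsilon(u_\varepsilon)$ is effectively supported in $\Lambda$, hence both $u_\varepsilon$ and $I_\alpha\ast\mathscr{G}_\varepsilon(u_\varepsilon)$ decay at infinity), proposition~\ref{propositionLowerBound} with $k=1$ gives $a_*\in\bar\Lambda$ and
\[
 \liminf_{n\to\infty}\frac{c_{\varepsilon_n}}{\varepsilon_n^N}\ge\mathcal{C}(a_*).
\]
Combined with lemma~\ref{lemmaUpperBound} and the strict monotonicity of $\mathcal{E}$ provided by proposition~\ref{propositionScaling}, this forces $V(a_*)\le\inf_\Lambda V$; since $\inf_{\partial\Lambda}V>\inf_\Lambda V$, the limit $a_*$ cannot lie on $\partial\Lambda$, so $a_*\in\Lambda$ and $V(a_*)=\inf_\Lambda V$. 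As every subsequential limit shares these properties, $V(a_\varepsilon)\to\inf_\Lambda V$ and $\liminf_{\varepsilon\to 0}\dist(a_\varepsilon,\R^N\setminus\Lambda)>0$. Moreover, the rescaled limit profile $v_*\not\equiv 0$ produced by proposition~\ref{propositionLowerBound} is positive everywhere by the strong maximum principle, so $\norm{v_*}_{L^\infty(B_\rho)}>0$ for the given $\rho$, and the uniform rescaled convergence transfers this to $\liminf_{\varepsilon\to 0}\norm{u_\varepsilon}_{L^\infty(B_{\varepsilon\rho}(a_\varepsilon))}>0$.

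For the uniform decay outside $B_{\varepsilon R}(a_\varepsilon)\cap U$, I would argue by contradiction: if it failed, there would exist $\varepsilon_n\to 0$, $R_n\to\infty$, some $\rho>0$, $\delta>0$ and $x_n\in U\setminus B_{\varepsilon_n R_n}(a_{\varepsilon_n})$ with
\[
 \norm{u_{\varepsilon_n}}_{L^\infty(B_{\varepsilon_n\rho}(x_n))}
 +\frac{1}{\varepsilon_n^\alpha}\norm{I_\alpha\ast\mathscr{G}_{\varepsilon_n}(u_{\varepsilon_n})}_{L^\infty(B_{\varepsilon_n\rho}(x_n))}\ge\delta.
\]
Compactness of $\bar U$ gives $x_n\to x_*\in\bar U$ along a subsequence, so $V(x_*)\ge\inf_U V>0$, while $|x_n-a_{\varepsilon_n}|/\varepsilon_n\ge R_n\to\infty$. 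Proposition~\ref{propositionLowerBound} with $k=2$ then yields $a_*,x_*\in\bar\Lambda$ and
\[
 \liminf_{n\to\infty}\frac{c_{\varepsilon_n}}{\varepsilon_n^N}\ge\mathcal{C}(a_*)+\mathcal{C}(x_*)\ge 2\inf_\Lambda\mathcal{C},
\]
contradicting lemma~\ref{lemmaUpperBound} because $\inf_\Lambda\mathcal{C}>0$.

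The hard part will be the initial non-vanishing: the classical Lions lemma only controls local $L^q$ masses, whereas here I need the positivity of a sum of a local $L^\infty$ norm and a local $L^\infty$ norm of the Riesz convolution $I_\alpha\ast\mathscr{G}_\varepsilon(u_\varepsilon)$. The nonlocality couples the contributions from $\Lambda$ (where the nonlinearity is unmodified) with those from $\R^N\setminus\Lambda$ (where only the weak bound $H_\varepsilon$ is available), and this coupling will have to be handled by the (weighted) Hardy--Littlewood--Sobolev machinery from section~2 combined with an $\varepsilon$-scaled covering argument, in the spirit of the proof of lemma~\ref{lemmaMPL}.
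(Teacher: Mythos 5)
Your treatment of the second, third and fourth assertions coincides with the paper's: play the upper bound of lemma~\ref{lemmaUpperBound} against proposition~\ref{propositionLowerBound} with \(k=1\) (using the strict monotonicity of \(\mathcal{E}\) from proposition~\ref{propositionScaling} and \(\inf_{\partial\Lambda}V>\inf_\Lambda V\) to locate \(a_*\) inside \(\Lambda\)), and then with \(k=2\) together with \(\inf_\Lambda\mathcal{C}>0\) for the uniform decay. That part is complete.

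The genuine gap is the initial non-vanishing step, which you yourself flag as ``the hard part'' and leave as a programme. As written, the covering of \(\R^N\) by \(\varepsilon\rho_0\)-balls is never actually used to control the nonlocal term \(\varepsilon^{-\alpha}\int(I_\alpha\ast\mathscr{G}_\varepsilon(u_\varepsilon))\,\mathscr{g}_\varepsilon(u_\varepsilon)u_\varepsilon\); a Lions-type lemma controls local \(L^q\) masses, not the \(L^\infty\) quantities you need, and the coupling between the \(\Lambda\)-part and the \(H_\varepsilon\)-part is exactly the unresolved point. The paper closes this step by a different and more elementary mechanism that requires neither a covering nor the lower bound \(c_\varepsilon\ge C_0\varepsilon^N\): test \eqref{equationPenalized} against \(u_\varepsilon\), split \(\mathscr{G}_\varepsilon(u_\varepsilon)\le\frac1p\chi_\Lambda(u_\varepsilon)_+^p+H_\varepsilon(u_\varepsilon)_+\) by \((g_2)\) and Young's inequality, absorb the \(H_\varepsilon\)-part into the left-hand side via \((H_2)\) and proposition~\ref{propositionHLSW} (using \(C_\alpha p\kappa(1+\lambda)<1\)), and bound the remaining term \(\varepsilon^{-\alpha}\int\bigabs{I_{\alpha/2}\ast(\chi_\Lambda(u_\varepsilon)_+^p)}^2\) by the Hardy--Littlewood--Sobolev inequality followed by an interpolation of \(L^{2Np/(N+\alpha)}(\Lambda)\) between \(L^\infty(\Lambda)\) and a lower Lebesgue norm controlled through the rescaled Sobolev inequality (and the a priori bound \eqref{lemmaUpperBound-plus}); this yields a bound of the form \(C\norm{u_\varepsilon}_{L^\infty(\Lambda)}^{\sigma}\norm{u_\varepsilon}_\varepsilon^2\) with \(\sigma>0\) and an \(\varepsilon\)-independent constant. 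Dividing by \(\norm{u_\varepsilon}_\varepsilon^2\ne0\) gives \(\norm{u_\varepsilon}_{L^\infty(\Lambda)}\ge c>0\) outright, and the points \(a_\varepsilon\) can then be chosen in \(\Lambda\) near where this supremum is attained --- which also removes the need for your hand-waved claim that \(u_\varepsilon\) ``decays at infinity'' to keep the concentration points bounded. Until you either carry out your Lions-type estimate for the nonlocal term or substitute an argument of this kind, the proof is incomplete at its foundational step.
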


\begin{proof}
We begin by showing that the solutions do not converge uniformly to \(0\) in \(\Lambda\).
By testing the equation against \(u_\varepsilon\), and applying the growth assumption \((g_2)\) and the Young inequality, we estimate for every \(\lambda > 0\)
\begin{multline*}
\int_{\R^N} \bigl(\varepsilon^2\abs{\nabla u_\varepsilon}^2 + V \abs{u_\varepsilon}^2\bigr) =
\frac{p}{\varepsilon^\alpha} \int_{\R^N} \bigl(I_\alpha \ast \mathscr{G}_\varepsilon(u_\varepsilon)\bigr)
\mathscr{g}_\varepsilon (u_\varepsilon) u_\varepsilon\\
\le
\Bigl(1 + \lambda\Bigr)\frac{p}{\varepsilon^\alpha} \int_{\R^N} \bigabs{I_\frac{\alpha}{2} \ast (H_\varepsilon (u_\varepsilon)_+)}^2 + \Bigl(1 + \frac{1}{\lambda}\Bigr)
\frac{1}{\varepsilon^\alpha} \int_{\R^N}\bigabs{I_\frac{\alpha}{2}\ast (\chi_{\Lambda} (u_\varepsilon)_+^p)}^2.
\end{multline*}
In view of the weighted Hardy--Littlewood--Sobolev inequality (proposition~\ref{propositionHLSW}) and of our hypothesis \((H_2)\), we have
\begin{equation}
\label{eqLinftyLambdac}
 \frac{p}{\varepsilon^\alpha} \int_{\R^N} \bigabs{I_\frac{\alpha}{2} \ast (H_\varepsilon (u_\varepsilon)_+)}^2
 \le \frac{C_\alpha p}{\varepsilon^\alpha} \int_{\R^N}  \abs{u_\varepsilon}^2 H_\varepsilon (x)^2 \abs{x}^\alpha \dif x
 \le C_\alpha p \kappa \int_{\R^N} \big(\varepsilon^2\abs{\nabla u_\varepsilon}^2 + V \abs{u_\varepsilon}^2\big)
\end{equation}
On the other hand, by the Hardy--Littlewood--Sobolev inequality of proposition~\ref{propositionHLS} and by the rescaled Sobolev inequality of proposition~\ref{propositionSobolevScaling},
\begin{equation}
\label{eqLinftyLambda}
\begin{split}
 \frac{1}{\varepsilon^\alpha} \int_{\R^N}\bigabs{I_\frac{\alpha}{2}\ast (\chi_{\Lambda} (u_\varepsilon)_+^{p})}^2
 &\le \frac{C}{\varepsilon^\alpha} \Big(\int_{\Lambda} \abs{u_\varepsilon}^\frac{2N p}{N+\alpha}\Big)^{1+\frac{\alpha}{N}}\\
 &\le \frac{C}{\varepsilon^\alpha} \norm{u_\varepsilon}_{L^\infty (\Lambda)}^{2 p - 2} \Big(\int_{\Lambda} \abs{u_\varepsilon}^\frac{2N}{N+\alpha}\Big)^{1+\frac{\alpha}{N}}\\
 &\le C' \norm{u_\varepsilon}_{L^\infty (\Lambda)}^{2 p - 2} \int_{\R^N} \varepsilon^2 \abs{\nabla u_\varepsilon}^2 + V \abs{u_\varepsilon}^2.
 \end{split}
\end{equation}
Since \(u_\varepsilon \ne 0\), we deduce from \eqref{eqLinftyLambdac} and \eqref{eqLinftyLambda} that
\[
 \bigl(1 - C_\alpha p\kappa  (1 + \lambda)\bigr) \le C' \Bigl(1 + \frac{1}{\lambda}\Bigr) \norm{u_\varepsilon}_{L^\infty}^{2p - 2}.
\]
Since \(C_\alpha p\kappa < 1\) by \((H_2)\), this gives the first conclusion if \(\lambda > 0\) is taken small enough.
There exists thus a family of points \((a_\varepsilon)_{\varepsilon > 0}\) in \(\Lambda\) such that
\[
  \liminf_{\varepsilon \to 0} \norm{u_\varepsilon}_{L^\infty (B_{\varepsilon \rho} (a_\varepsilon))} > 0.
\]

Next, assume that the sequence of points \((a_{\varepsilon_n})_{n \in \N}\) in \(U\) converges to \(a_* \in \Bar{\Lambda}\) and \(\lim_{n \to \infty} \varepsilon_n = 0\).
By the lower bound on the energy of proposition~\ref{propositionLowerBound},
\[
  \liminf_{n \to \infty} \frac{1}{\varepsilon_n^N} \J_{\varepsilon_n} (u_{\varepsilon_n}) \ge \mathcal{C} (a_*) = \lim_{n \to \infty} \mathcal{C} (a_n).
\]
In view of the upper bound of the energy of lemma~\ref{lemmaUpperBound}, this implies that
\[
  \lim_{n \to \infty} \mathcal{C} (a_n) = \inf_{\Lambda} \mathcal{C}.
\]
Since the set \(\Bar{U}\) is compact, this proves that \(\lim_{\varepsilon \to 0} \mathcal{C} (a_\varepsilon) = \inf_{\Lambda} \mathcal{C}\). By the definition in \eqref{defC} of the concentration function \(\mathcal{C}\) and by proposition~\ref{propositionScaling}, this implies that \(\lim_{\varepsilon \to 0} V (a_\varepsilon) = \inf_{\Lambda} V\), and thus, as \(\inf_{\Lambda} V < \inf_{\partial \Lambda} V\), that \(\lim_{\varepsilon \to 0} \dist (a_\varepsilon, \R^N \setminus \Lambda) > 0\).

Finally, we assume by contradiction that
\[
\limsup_{\substack{R \to \infty\\ \varepsilon \to 0}} \norm{u_\varepsilon}_{L^\infty (U \setminus B_{\varepsilon R} (a_\varepsilon))} + \varepsilon^{-\alpha} \norm{I_\alpha \ast \mathscr{G}_\varepsilon (u_\varepsilon)}_{L^\infty (U \setminus B_{\varepsilon R} (a_\varepsilon))} > 0,
\]
Then, there exists a sequence of positive numbers \((\varepsilon_n)_{n \in \N}\) converging to \(0\) and a sequence of points \((x_n)_{n \in \N}\) in \(U\) such that
\[
  \liminf_{\varepsilon \to 0} \norm{u_{\varepsilon_n}}_{L^\infty (B_{\varepsilon_n \rho} (x_n))} + \varepsilon_n^{-\alpha} \norm{I_\alpha \ast \mathscr{G}_{\varepsilon_n} (u_{\varepsilon_n})}_{L^\infty (B_{\varepsilon_n \rho} (x_n))} > 0
\]
and
\[
  \lim_{n \to \infty} \frac{\abs{x_n - a_{\varepsilon_n}}}{\varepsilon_n} = \infty.
\]
Since \(\Bar{U}\) is compact, we can assume that the sequence \((x_n)_{n \in \N}\) converges to \(x_*\) and that \((a_{\varepsilon_n})_{n \in \N}\) converges to \(a_*\).
By the lower bound on the energy of proposition~\ref{propositionLowerBound}, \(x_* \in \Bar{\Lambda}\), \(a_* \in \Bar{\Lambda}\), and
\[
  \liminf_{n \to \infty} \frac{1}{\varepsilon_n^N} \J_{\varepsilon_n} (u_{\varepsilon_n}) \ge \mathcal{C} (a_*) + \mathcal{C} (x_*) \ge 2 \inf_{\Lambda} \mathcal{C}.
\]
On the other hand, the upper bound lemma~\ref{lemmaUpperBound} shows that
\[
  \liminf_{n \to \infty} \frac{1}{\varepsilon_n^N} \J_{\varepsilon_n} (u_{\varepsilon_n}) \le \inf_{\Lambda} \mathcal{C}.
\]
This brings a contradiction since \(\inf_{\Lambda} \mathcal{C} > 0\) in view of the definition of \(\mathcal{C}\), proposition~\ref{propositionScaling} and the assumption \(\inf_\Lambda V > 0\).
\end{proof}

\section{Back to the original problem}\label{Section-Back}

\subsection{Linear equation outside small balls}
In order to prove that solutions of the penalized problem \eqref{equationPenalized} solve the original Choquard equation \eqref{equationNLChoquard}, we must show that these solutions are small enough in the penalized region
\(\R^N\setminus\Lambda\).
We begin by showing that if $p\ge 2$ then solutions \(u_\varepsilon\) of penalized problem \eqref{equationPenalized} are also subsolutions of a nonlocal linear problem outside small balls centered on a family of points \((a_\varepsilon)_{\varepsilon > 0}\) at which \(u_\varepsilon\) does not vanish, as constructed in proposition~\ref{propositionConcentration}.

\begin{proposition}[Linear equation outside small balls]
\label{propositionSubsolutions}
Let \(p\in\big[2,\frac{N + \alpha}{(N - 2)_+}\big)\).
For \(\varepsilon > 0\) small enough and \(\delta \in (0,1)\), there exists \(R > 0\), \(a_\varepsilon \in \Lambda\) and \(\nu>0\) such that
\[
\left\{
\begin{aligned}
  - \varepsilon^2 \Delta u_\varepsilon + (1 - \delta) V u_\varepsilon &\le
  \bigl(p \varepsilon^{-\alpha} I_\alpha \ast (H_\varepsilon u_\varepsilon) + \nu \varepsilon^{N - \alpha} I_\alpha \bigr) H_\varepsilon &
  & \text{in \(\R^N \setminus B (a_\varepsilon, R\varepsilon)\)},\\
  u_\varepsilon &\le 1&
  & \text{in \(\Lambda\setminus B (a_\varepsilon, R\varepsilon)\)},
\end{aligned}
\right.
\]
where the constant \(\nu\) only depends on \(V\), \(\Lambda\), \(U\), \(p\) and \(\kappa\).
\end{proposition}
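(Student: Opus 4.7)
The strategy is to start from the penalized equation and handle separately the two components of $\R^N \setminus B(a_\varepsilon, R\varepsilon)$: the interior $\Lambda \setminus B(a_\varepsilon, R\varepsilon)$ where the penalization vanishes, and the exterior $\R^N \setminus \Lambda$ where it is active. First, I would invoke Proposition~\ref{propositionConcentration} to select the family of concentration points $(a_\varepsilon)_{\varepsilon > 0}$, extracting three pieces of information: $u_\varepsilon \le 1$ on $\Lambda \setminus B_{\varepsilon R}(a_\varepsilon)$ for $R$ large and $\varepsilon$ small (this directly gives the second inequality of the conclusion), $\dist(a_\varepsilon, \R^N \setminus \Lambda)$ is bounded below by some $d_0>0$, and both $\norm{u_\varepsilon}_{L^\infty(U \setminus B_{\varepsilon R}(a_\varepsilon))}$ and $\varepsilon^{-\alpha}\norm{I_\alpha \ast \mathscr{G}_\varepsilon(u_\varepsilon)}_{L^\infty(U \setminus B_{\varepsilon R}(a_\varepsilon))}$ tend to $0$ as $R \to \infty$ and $\varepsilon \to 0$.

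On the interior region $\Lambda \setminus B(a_\varepsilon, R\varepsilon)$, the penalization potential $H_\varepsilon$ vanishes, so the right-hand side of the claimed inequality is zero and the task reduces to proving $-\varepsilon^2 \Delta u_\varepsilon + (1-\delta) V u_\varepsilon \le 0$. Here the penalized equation reads $-\varepsilon^2 \Delta u_\varepsilon + V u_\varepsilon = p \varepsilon^{-\alpha}(I_\alpha \ast \mathscr{G}_\varepsilon(u_\varepsilon)) u_\varepsilon^{p-1}$. Since $p \ge 2$ and $u_\varepsilon \le 1$ on this region, one has $u_\varepsilon^{p-1} \le u_\varepsilon$; combining this with the uniform smallness of the nonlocal prefactor from Proposition~\ref{propositionConcentration} and the positivity $V \ge \inf_U V > 0$, one absorbs the nonlinear factor into $\delta V$ by choosing $R$ sufficiently large and $\varepsilon$ sufficiently small, which yields the desired inequality.

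On the exterior region $\R^N \setminus \Lambda$, the bound $\mathscr{g}_\varepsilon(u_\varepsilon) \le H_\varepsilon$ from $(g_2)$ gives
\[
-\varepsilon^2\Delta u_\varepsilon + V u_\varepsilon \le p\varepsilon^{-\alpha}\bigl(I_\alpha \ast \mathscr{G}_\varepsilon(u_\varepsilon)\bigr)\,H_\varepsilon,
\]
and the pointwise splitting $\mathscr{G}_\varepsilon(u_\varepsilon) \le \chi_{\R^N \setminus \Lambda} H_\varepsilon u_\varepsilon + \frac{1}{p} \chi_\Lambda u_\varepsilon^p$ supplied by $(g_2)$ gives $I_\alpha \ast \mathscr{G}_\varepsilon(u_\varepsilon) \le I_\alpha \ast (H_\varepsilon u_\varepsilon) + \frac{1}{p} I_\alpha \ast (\chi_\Lambda u_\varepsilon^p)$. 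The first summand delivers the main linear term of the conclusion, so the remaining task is to bound $I_\alpha \ast (\chi_\Lambda u_\varepsilon^p)(x)$ by $\nu \varepsilon^N I_\alpha(x)$ for $x \in \R^N \setminus \Lambda$. The global estimate $\int_\Lambda u_\varepsilon^p \le C\varepsilon^N$, obtained from the energy bound \eqref{lemmaUpperBound-plus} combined with the rescaled Sobolev inequality of Proposition~\ref{propositionSobolevScaling} applied with $q = p$ (admissible since $p < \frac{N+\alpha}{(N-2)_+}$ lies below the critical Sobolev exponent), furnishes the $L^p$ mass input.

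The main obstacle is to establish this last pointwise bound uniformly in $x \in \R^N \setminus \Lambda$. For $\abs{x - a_\varepsilon}$ large compared to the diameter of $\Lambda$, the pointwise comparison $I_\alpha(x-y) \le C I_\alpha(x)$ holds uniformly for $y \in \Lambda$ and the conclusion follows at once from the $L^p$ bound. For $x$ in a bounded neighborhood of $\Lambda$, where $I_\alpha(x)$ is bounded from below, I would split $\Lambda$ into the concentration ball $B(a_\varepsilon, R\varepsilon)$, on which $\abs{x-y} \ge d_0/2$ renders $I_\alpha(x-y)$ uniformly bounded and yields a ball contribution of order $C \varepsilon^N$, and its complement, where the uniform smallness $u_\varepsilon \le \eta_{R,\varepsilon}$ together with a H\"older splitting of the Riesz integral into near- and far-from-$x$ pieces produces a bound controlled by a power of $\eta_{R,\varepsilon}$. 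Allowing $R$ to depend on $\varepsilon$ so that $\eta_{R(\varepsilon),\varepsilon}$ decays like a sufficient power of $\varepsilon$ (which is feasible because the limiting rescaled solution decays exponentially at infinity) closes the argument with a constant $\nu$ depending only on $V$, $\Lambda$, $U$, $p$ and $\kappa$.
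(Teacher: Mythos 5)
Your treatment of the two easy regions is fine and matches the paper, but your decomposition of $\R^N\setminus B(a_\varepsilon,R\varepsilon)$ into $\Lambda\setminus B(a_\varepsilon,R\varepsilon)$ and $\R^N\setminus\Lambda$ creates a genuine gap in the annular region $U\setminus\Lambda$. There you must prove the pointwise bound $I_\alpha\ast(\chi_\Lambda u_\varepsilon^p)(x)\le \nu\varepsilon^N I_\alpha(x)$ for $x$ arbitrarily close to $\partial\Lambda$, and the argument you sketch for it does not close. The contribution of $\Lambda\setminus B(a_\varepsilon,R\varepsilon)$ near such an $x$ cannot be reduced to $O(\varepsilon^N)$ by a H\"older splitting: the rescaled Sobolev inequality gives $\int_\Lambda u_\varepsilon^{q}\le C\varepsilon^{N(1-q/2)}\norm{u_\varepsilon}_\varepsilon^{q}$, so any H\"older exponent used to tame the singularity of $I_\alpha(x-\cdot)$ degrades the rate below $\varepsilon^N$, and the factor $\norm{u_\varepsilon}_{L^\infty(\Lambda\setminus B_{\varepsilon R})}^{p-2}$ you want to compensate with is absent when $p=2$ and, for $p>2$, is only known to be $o(1)$ — proposition~\ref{propositionConcentration} gives no rate. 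Your proposed repair, taking $R=R(\varepsilon)$ so that this sup-norm decays like a power of $\varepsilon$ ``because the limiting rescaled solution decays exponentially,'' is circular: the convergence $v_n\to v_*$ is only locally uniform, so the decay of $v_*$ transfers no quantitative decay to $u_\varepsilon$; establishing decay of $u_\varepsilon$ away from $a_\varepsilon$ is exactly what the barrier construction that follows this proposition is for. Moreover $R$ must be a fixed constant for the subsequent comparison argument.

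The paper avoids the problem by cutting along $\partial U$ rather than $\partial\Lambda$. On all of $U\setminus B(a_\varepsilon,R\varepsilon)$ (not just on $\Lambda$) one has $\mathscr{g}_\varepsilon(u_\varepsilon)\le (u_\varepsilon)_+^{p-1}$ by $(g_1)$, $u_\varepsilon\le 1$, and $p\varepsilon^{-\alpha}\bigl(I_\alpha\ast\mathscr{G}_\varepsilon(u_\varepsilon)\bigr)(u_\varepsilon)_+^{p-2}\le\delta\inf_U V$ by proposition~\ref{propositionConcentration}, so the entire nonlinear term is absorbed into $\delta Vu_\varepsilon$ and the desired inequality holds there because its right-hand side is nonnegative. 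The Riesz-kernel comparison $I_\alpha(x-y)\le CI_\alpha(x)$ is then only needed for $x\in\R^N\setminus U$ and $y\in\Lambda$, where it is uniform since $\dist(\R^N\setminus U,\Lambda)>0$ and $\Lambda$ is bounded; combined with $\int_\Lambda (u_\varepsilon)_+^p\le C\varepsilon^{N}$ from \eqref{lemmaUpperBound-plus} and proposition~\ref{propositionSobolevScaling}, this yields $\varepsilon^{-\alpha}I_\alpha\ast(\chi_\Lambda(u_\varepsilon)_+^p)\le\nu\varepsilon^{N-\alpha}I_\alpha$ exactly where it is needed. If you fold $U\setminus\Lambda$ into your first case instead of your second, your argument becomes the paper's.
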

\begin{proof}
Using the assumption \eqref{assumptGBoundp} and since \(p \ge 2\), by proposition~\ref{propositionConcentration}
there exist \(R>0\) and a family of points \((a_\varepsilon)_{\varepsilon > 0}\) in \(\Lambda\) such that
\(p \bigl(\varepsilon^{-\alpha}I_\alpha \ast \mathscr{G}_\varepsilon (u_\varepsilon)\bigr) (u_\varepsilon)_+^{p - 2} \le \delta\) in \(U\setminus B (a_\varepsilon, R\varepsilon)\) and the nonlinear term is bounded as
\[
 p\varepsilon^{-\alpha}\bigl(I_\alpha \ast \mathscr{G}_\varepsilon (u_\varepsilon)\bigr) \mathscr{g}_\varepsilon (u_\varepsilon)
 \le p \bigl(\varepsilon^{-\alpha}I_\alpha \ast \mathscr{G}_\varepsilon (u_\varepsilon)\bigr) (u_\varepsilon)_+^{p - 1}
 \le \delta V u_\varepsilon
 \quad \text{in \(U\setminus B (a_\varepsilon, R\varepsilon)\).}
\]
In view of the assumption \((g_2)\) in \(\R^N\setminus U\), the nonlinear term is bounded there as
\[
 p \bigl(\varepsilon^{-\alpha} I_\alpha \ast \mathscr{G}_\varepsilon (u_\varepsilon)\bigr)\mathscr{g}_\varepsilon (u_\varepsilon)
 \le p \bigl(\varepsilon^{-\alpha} I_\alpha \ast \big(H_\varepsilon u_\varepsilon + \tfrac{1}{p}\chi_{\Lambda} (u_\varepsilon)^p \big)\bigr)H_\varepsilon
  \quad \text{in \(\R^N \setminus U\).}
\]
By the scaled Sobolev inequality proposition~\ref{propositionSobolevScaling} and by the upper bound on the norm of solutions \eqref{lemmaUpperBound-plus}
in \(\R^N\setminus U\) it holds
\begin{equation*}
  \frac{1}{\varepsilon^\alpha} I_\alpha \ast \bigl(\chi_\Lambda (u_\varepsilon)_+^p\bigr)
  \le C \frac{I_\alpha}{\varepsilon^\alpha} \int_{\Lambda} (u_\varepsilon)_+^p \le C' I_\alpha \varepsilon^{N (1 - \frac{p}{2})} \Bigl(\int_{\R^N} \varepsilon^2 \abs{\nabla u_\varepsilon}^2 + V(u_\varepsilon)^2 \Bigr)^\frac{p}{2}
  \le \nu I_\alpha \varepsilon^{N - \alpha},
\end{equation*}
where \(\nu>0\) only depends on the potential \(V\), the sets \(\Lambda\) and \(U\), the exponent \(p\) and the penalization parameter \(\kappa\). We conclude by inserting the previous pointwise bounds in the penalized equation \eqref{equationPenalized}.
\end{proof}

\subsection{Comparison principle}
A second tool is a comparison principle for nonlocal problems
in subdomains of \(\R^N\).

\begin{proposition}[Comparison principle]
\label{propositionComparison}
Let \(\Omega \subseteq \R^N\) be a nonempty open set and \(H\) is nonnegative and that \((H_2)\) holds.
If \(u \in H^1_\mathrm{loc} (\Omega)\cap L^2(\Omega,H_\varepsilon(x)^2\abs{x}^\alpha\dif x)\) satisfies weakly
\[
 -\varepsilon^2 \Delta u + V u \le \frac{p}{\varepsilon^\alpha}(I_\alpha \ast H_\varepsilon u)H_\varepsilon\quad\text{in \(\Omega\)},
\]
that is, if \(\varphi \in H^1 (\Omega)\) is compactly supported in \(\Omega\) and \(\varphi \ge 0\),
\[
  \int_{\Omega} \varepsilon^2 \scalprod{\nabla u}{\nabla \varphi} + V u \varphi \le \frac{p}{\varepsilon^\alpha}\int_{\Omega} \int_{\Omega} I_\alpha (x - y) \, H_\varepsilon (y) u (y) \,H_\varepsilon (x) \varphi (x)\dif y \dif x
\]
and \(u_+ \in H^1_{V, 0} (\Omega)\), then \(u \le 0\) in \(\Omega\).
\end{proposition}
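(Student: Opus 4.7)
The plan is to exploit the smallness encoded in \((H_2)\) by using \(u_+\) as a test function in the weak inequality and absorbing the nonlocal right-hand side back into \(\norm{u_+}_\varepsilon^2\). Since the weak inequality only accepts compactly supported test functions, I would first pick a sequence \((\psi_n)_{n \in \N} \subset C^\infty_c (\Omega)\) of nonnegative functions with \(\psi_n \to u_+\) in \(\norm{\cdot}_\varepsilon\) (available since \(u_+ \in H^1_{V, 0}(\Omega)\)) and set \(\varphi_n := \min(\psi_n, u_+) \in H^1 (\Omega)\). Each \(\varphi_n\) is nonnegative, compactly supported in \(\Omega\), satisfies \(0 \le \varphi_n \le u_+\) (hence \(\supp \varphi_n \subset \{u > 0\}\)), and one checks that \(\abs{\nabla(\varphi_n - u_+)} \le \abs{\nabla \psi_n - \nabla u_+}\) and \(\abs{\varphi_n - u_+} \le \abs{\psi_n - u_+}\) pointwise, so \(\varphi_n \to u_+\) in \(\norm{\cdot}_\varepsilon\).

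Testing the weak inequality against \(\varphi_n\) gives
\[
\int_\Omega \varepsilon^2 \scalprod{\nabla u}{\nabla \varphi_n} + V u \varphi_n \le \frac{p}{\varepsilon^\alpha} \int_\Omega \int_\Omega I_\alpha(x-y) H_\varepsilon(y) u(y) H_\varepsilon(x) \varphi_n(x) \dif y \dif x.
\]
On the left-hand side, \(\nabla u = \nabla u_+\) and \(u = u_+\) a.e.\ on \(\{u > 0\} \supset \supp \varphi_n\) by the Stampacchia chain rule, so the integral equals \(\int_\Omega \varepsilon^2 \scalprod{\nabla u_+}{\nabla \varphi_n} + V u_+ \varphi_n\), and Cauchy--Schwarz together with \(\varphi_n \to u_+\) in \(\norm{\cdot}_\varepsilon\) yields convergence to \(\norm{u_+}_\varepsilon^2\). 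For the right-hand side, the assumption \(u \in L^2(\Omega; H_\varepsilon^2 \abs{x}^\alpha \dif x)\) and \((H_2)\) applied to \(\varphi_n - u_+\) imply that \(H_\varepsilon u\), \(H_\varepsilon \varphi_n\) and \(H_\varepsilon u_+\) (extended by zero outside \(\Omega\)) all belong to \(L^2(\R^N; \abs{x}^\alpha \dif x)\), with \(H_\varepsilon \varphi_n \to H_\varepsilon u_+\) in this space. Writing \(I_\alpha = I_{\alpha/2} \ast I_{\alpha/2}\) and invoking proposition~\ref{propositionHLSW}, one passes to the limit in the nonlocal term to obtain
\[
\norm{u_+}_\varepsilon^2 \le \frac{p}{\varepsilon^\alpha} \int_{\R^N} \bigl(I_\alpha \ast (H_\varepsilon u)\bigr)\, H_\varepsilon u_+.
\]

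To close the estimate, I would use the pointwise bound \(u \le u_+\) together with the positivity of \(I_\alpha\), \(H_\varepsilon\) and \(H_\varepsilon u_+\) to obtain
\[
\frac{p}{\varepsilon^\alpha} \int_{\R^N} \bigl(I_\alpha \ast (H_\varepsilon u)\bigr) H_\varepsilon u_+ \le \frac{p}{\varepsilon^\alpha} \int_{\R^N} \bigabs{I_{\alpha/2} \ast (H_\varepsilon u_+)}^2 \le C_\alpha p \kappa\, \norm{u_+}_\varepsilon^2,
\]
where the last inequality is proposition~\ref{propositionHLSW} followed by hypothesis \((H_2)\). Since \(C_\alpha p \kappa = 1 - \delta < 1\), chaining the inequalities forces \(\norm{u_+}_\varepsilon^2 \le (1 - \delta)\norm{u_+}_\varepsilon^2\), so \(\norm{u_+}_\varepsilon = 0\) and \(u \le 0\) in \(\Omega\). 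The main obstacle is the approximation step: one has to build a test function that captures \(u_+\) while remaining compactly supported, and the choice \(\min (\psi_n, u_+)\) is convenient precisely because it is dominated by \(u_+\) and lives on \(\{u > 0\}\), which bypasses having to control \(\nabla u\) outside a bounded set while still allowing the nonlocal integral to converge thanks to \((H_2)\).
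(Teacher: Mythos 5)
Your proposal is correct and follows essentially the same route as the paper: approximate \(u_+\) by the compactly supported truncations \(\min(\psi_n, u_+)\), test the weak inequality against them, replace \(u\) by \(u_+\) using \(u\le u_+\) and the positivity of \(I_\alpha\) and \(H_\varepsilon\), and absorb the nonlocal term via the weighted Hardy--Littlewood--Sobolev inequality together with \((H_2)\) and \(C_\alpha p\kappa<1\). The only (immaterial) difference is that the paper keeps the nonlocal term as a Cauchy--Schwarz product of two \(L^2\)-norms of \(I_{\alpha/2}\)-potentials before passing to the limit, whereas you pass to the limit first and then symmetrize.
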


The assumption \(u \in L^2(\Omega,H_\varepsilon (x)^2 \abs{x}^\alpha\dif x)\) ensures that by the weighted Hardy--Littlewood--Sobolev inequality (proposition~\ref{propositionHLSW}) \(I_\alpha \ast H_\varepsilon u \in L^2(\Omega, \abs{x}^{-\alpha}\dif x)\). By \((H_2)\), if \(\varphi \in H^1 (\Omega)\) is compactly supported in \(\Omega\), then \(H_\varepsilon \varphi \in L^2(\Omega, \abs{x}^{\alpha}\dif x)\).

\begin{proof}
By definition of the space \(H^1_{V, 0} (\Omega)\), there exists a sequence \((\varphi_n)_{n \in \N}\) in \(C^\infty_c(\Omega)\) such that $\lim_{n \to \infty} \norm{\varphi_n-u_+}\to 0$.
If we set \(u_n:=\min\{(\varphi_n)_+,u_+\}\), then \(u_n\in H^1_{V, 0} (\Omega)\) and by Lebesgue's dominated convergence theorem,
\[
\lim_{n \to \infty} \int_{\R^N} \varepsilon^2 \abs{\nabla u_n - \nabla u_+}^2 + V \abs{u_n - u_+}^2 = 0.
\]
In addition, \(u_n\) is nonnegative and compactly supported in \(\Omega\) and \(u_- u_n=0\).
Testing the inequation against \(u_n\) we obtain, by the Cauchy-Schwarz inequality, the weighted Hardy--Littlewood--Sobolev inequality (proposition~\ref{propositionHLSW}) and the assumption \((H_2)\),
\begin{equation*}
\begin{split}
0 &\ge \int_{\Omega} \bigl(\varepsilon^2 \scalprod{\nabla u}{\nabla u_n}
      + Vu u_n\bigr) - \frac{p}{\varepsilon^\alpha} \int_{\Omega} (I_\alpha\ast H_\varepsilon u) H_\varepsilon u_n,\\
& \ge \int_{\Omega} \bigl(\varepsilon^2 \scalprod{\nabla u_+}{\nabla u_n} + V u_+ u_n\bigr) - \frac{p}{\varepsilon^\alpha}\int_{\Omega} (I_\alpha\ast H_\varepsilon u_+)H_\varepsilon u_n\\
& \ge \int_{\Omega} \bigl(\varepsilon^2 \scalprod{\nabla u_+}{\nabla u_n} + V u_+ u_n\bigr)\\
&\qquad \qquad
- C_\alpha p \kappa \Bigl(\int_{\Omega} \varepsilon^2 \abs{\nabla u_n}^2 + V \abs{u_n}^2\Bigr)^\frac{1}{2}\Bigl(\int_{\Omega} \varepsilon^2 \abs{\nabla u_+}^2 + V \abs{u_+}^2\Bigr)^\frac{1}{2}.
\end{split}
\end{equation*}
By letting \(n \to \infty\), we conclude that
\[
  0 \ge (1 - C_\alpha \kappa) \int_{\Omega} \abs{\nabla u_+}^2 + V\abs{u_+}^2,
\]
so that, since \(C_\alpha \kappa < 1\),  \(u_+ = 0\) and thus \(u \le 0\).
\end{proof}

\subsection{Construction of penalization from supersolutions}
We now construct supersolutions to a modification of nonlocal equation \eqref{equationNLChoquard}.
These supersolutions will be used to construct explicitly penalizations \(H_\varepsilon\) and to employ proposition
\ref{propositionSubsolutions} in order to show that solutions of penalized problem \eqref{equationPenalized}
solve the original problem \eqref{equationNLChoquard} for sufficiently small \(\varepsilon>0\).

\begin{proposition}[Construction of penalization from supersolutions]
\label{propositionConstruction}
Let \(N \in \N\), \(\alpha \in (0, N)\), \(p\ge 2\) and \(V \in C(\R^N;[0, \infty))\).
If \(p=2\) and \(\alpha \ge N - 2\), assume additionally that \(\inf_{\R^N}V>0\) and that
\[
 \liminf_{\abs{x} \to \infty} V (x) \abs{x}^{N - \alpha} > 0.
\]
If \(2<p \le 1 + \max(\alpha, 1 + \frac{\alpha}{2})/(N - 2)_+\) and \(\alpha > N - 2\), assume additionally that
\[
 \liminf_{\abs{x} \to \infty} V (x) \abs{x}^{2} > 0.
\]
Let \(\nu > 0\), \(\lambda>0\) and \(\delta\in(0,1)\).
For all sufficiently small \(\varepsilon > 0\), there exists \(\Bar{u}_\varepsilon \in H^1_{V} (\R^N)\), \(\Bar{u}_\varepsilon>0\) such that
\begin{equation*}
\left\{
\begin{aligned}
 -\varepsilon^2 \Delta \Bar{u}_\varepsilon + (1-\delta) V \Bar{u}_\varepsilon & \ge
 \bigl( p \varepsilon^{- \alpha}I_\alpha \ast \bigl(\chi_{\R^N \setminus \Lambda} \Bar{u}_\varepsilon^p\bigr) + \nu\varepsilon^{N - \alpha} I_\alpha\bigr) \chi_{\R^N \setminus \Lambda}{\Bar{u}_\varepsilon}^{p - 1}
 & & \text{in \(\R^N\)},\\
 \Bar{u}_\varepsilon & = e^{- \frac{\lambda}{{\varepsilon}}} & &\text{on \(\Bar{\Lambda}\)}.
\end{aligned}
\right.
\end{equation*}
Moreover, \(H_\varepsilon=\chi_{\R^N\setminus\Lambda} \Bar{u}_\varepsilon^{p - 1}\)
defines a penalization which converges uniformly to \(0\) as \(\varepsilon \to 0\) and satisfies \((H_1)\), \((H_2)\).
\end{proposition}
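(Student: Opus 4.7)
The plan is to construct $\bar u_\varepsilon$ of the form $\bar u_\varepsilon(x)=e^{-\lambda/\varepsilon}W_\varepsilon(x)$, where $W_\varepsilon$ is a positive smooth function with $W_\varepsilon\equiv1$ on $\bar\Lambda$ and a polynomial decay at infinity chosen separately in each regime of $(p,\alpha)$ and of the decay of $V$. The guiding principle is that the linear operator contributes a quantity of order $e^{-\lambda/\varepsilon}$, whereas the nonlocal right-hand side has the factor $\bar u_\varepsilon^p\cdot\bar u_\varepsilon^{p-1}$ and is of order $e^{-(2p-1)\lambda/\varepsilon}$, exponentially small relative to the LHS for small $\varepsilon$; this is precisely why the assumption $p\ge2$ is indispensable. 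A similar exponential gap $e^{-(p-2)\lambda/\varepsilon}$ combined with the prefactor $\varepsilon^{N-\alpha}$ absorbs the second term on the RHS.

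Case split for the ansatz. On $\bar\Lambda$ the inequality is trivial, since $\chi_{\R^N\setminus\Lambda}=0$ makes the RHS vanish while the LHS equals $(1-\delta)Ve^{-\lambda/\varepsilon}\ge0$. In $\R^N\setminus\Lambda$ the decay of $W_\varepsilon$ is calibrated as follows. When $p=2$ and $\alpha<N-2$, a bounded $W_\varepsilon$ with tail $|x|^{-(N-2)}$, smoothed across $\partial\Lambda$, suffices: $|x|^{-(N-2)}$ is harmonic outside a compact set, $I_\alpha\ast W_\varepsilon^2$ is integrable, and the exponential prefactor together with $\inf_\Lambda V>0$ swallow everything. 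When $p=2$ and $\alpha\ge N-2$ the nonlocal term decays no faster than $W_\varepsilon$ and one uses the hypothesis $V(x)\gtrsim(1+\abs{x})^{-(N-\alpha)}$ to dominate it by $(1-\delta)VW_\varepsilon$. When $p>2$ one takes $W_\varepsilon$ with a rate tuned to $p$ and uses the stronger hypothesis $V(x)\gtrsim(1+\abs{x})^{-2}$ where needed. A concrete template is $W_\varepsilon(x)=\min(1,(1+\abs{x})^{-\sigma})$ with $\sigma=\sigma(N,\alpha,p)$.

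Verification of the differential inequality. Substitution yields
\[
-\varepsilon^2\Delta\bar u_\varepsilon+(1-\delta)V\bar u_\varepsilon=e^{-\lambda/\varepsilon}\bigl(-\varepsilon^2\Delta W_\varepsilon+(1-\delta)VW_\varepsilon\bigr),
\]
so it remains to check that $-\varepsilon^2\Delta W_\varepsilon+(1-\delta)VW_\varepsilon$ dominates the residuals
\[
p\varepsilon^{-\alpha}e^{-2(p-1)\lambda/\varepsilon}\bigl(I_\alpha\ast(\chi_{\R^N\setminus\Lambda}W_\varepsilon^p)\bigr)W_\varepsilon^{p-1}+\nu\varepsilon^{N-\alpha}e^{-(p-2)\lambda/\varepsilon}I_\alpha W_\varepsilon^{p-1}.
\]
Since $\Delta W_\varepsilon$ is pointwise controlled by a polynomial expression (by construction) while $V$ is nonnegative, the exponential factor combined with $\varepsilon^2$ (respectively $\varepsilon^{N-\alpha}$) makes the residuals subdominant, provided the decay hypotheses on $V$ stated in the proposition hold.

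Verification of $(H_1)$ and $(H_2)$ and main obstacle. With $H_\varepsilon=\chi_{\R^N\setminus\Lambda}\bar u_\varepsilon^{p-1}$ one has $H_\varepsilon\le e^{-(p-1)\lambda/\varepsilon}(\sup W_\varepsilon)^{p-1}$, so \eqref{eqHtozero} follows. For $(H_2)$, proposition~\ref{propositionHLSW} reduces the estimate to a weighted Hardy-type bound
\[
\frac{1}{\varepsilon^\alpha}\int_{\R^N\setminus\Lambda}\abs{H_\varepsilon(x)\varphi(x)}^2\abs{x}^\alpha\dif x\le Ce^{-2(p-1)\lambda/\varepsilon}\int_{\R^N\setminus\Lambda}\frac{\abs{\varphi(x)}^2}{(1+\abs{x})^{2\tau}}\dif x
\]
for a suitable $\tau$, and the classical Hardy/Sobolev inequalities combined with the lower bound on $V$ dictated by the relevant case give the constant $\kappa$ as small as desired once $\varepsilon$ is small, in particular $C_\alpha p\kappa<1$. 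Condition $(H_1)$ follows from Rellich--Kondrachov on bounded sets together with the polynomial decay of $H_\varepsilon(x)\abs{x}^{\alpha/2}$ at infinity. The main obstacle is the borderline regime $\alpha\ge N-2$ (respectively $p\le1+\max(\alpha,1+\alpha/2)/(N-2)_+$ for $p>2$), where the Riesz kernel is not integrable enough for the nonlocal term to decay faster than $W_\varepsilon$; the precise Herbst constant in proposition~\ref{propositionHLSW} together with the matching decay hypothesis on $V$ must be used simultaneously to produce a pointwise supersolution and a Hardy constant $\kappa$ with $C_\alpha p\kappa<1$, which is where the balance of polynomial rates must be executed most carefully.
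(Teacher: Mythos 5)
Your overall strategy --- an exponentially small constant \(e^{-\lambda/\varepsilon}\) times a fixed profile equal to \(1\) on \(\Bar{\Lambda}\) with polynomial decay, the gaps \(e^{-(2p-2)\lambda/\varepsilon}\) and \(e^{-(p-2)\lambda/\varepsilon}\) absorbing the two right-hand terms --- is the paper's. But there is a genuine gap in the regime where the proposition assumes \emph{no} lower bound on \(V\) at infinity (namely \(p>1+\max(\alpha,\tfrac{\alpha+2}{2})/(N-2)_+\), which for \(p=2\) is essentially your case \(\alpha<N-2\)). There you take the tail \(\abs{x}^{-(N-2)}\), observe that it is harmonic, and claim that the exponential prefactor together with \(\inf_\Lambda V>0\) ``swallow everything''. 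This fails: outside \(\Lambda\) the potential \(V\) may vanish identically (it is only assumed continuous and nonnegative), so with a harmonic tail the left-hand side \(-\varepsilon^2\Delta W+(1-\delta)VW\) is zero near infinity, while the right-hand side contains the strictly positive term \(\nu\varepsilon^{N-\alpha}I_\alpha W^{p-1}\); no constant, however small, makes \(0\ge c>0\), and \(\inf_\Lambda V>0\) is of no help outside \(\Lambda\), which is exactly where the penalization acts. The fix, and the heart of the paper's corresponding case, is to take a strictly \emph{subharmonic} tail \(\abs{x}^{-\mu}\) with \(0<\mu<N-2\), so that \(-\varepsilon^2\Delta w_\mu\ge\varepsilon^2\mu(N-2-\mu)\abs{x}^{-2}w_\mu\) supplies the positive term, and then to check that \((I_\alpha\ast w_\mu^p)\,w_\mu^{p-2}\) and \(I_\alpha w_\mu^{p-2}\) are both \(O(\abs{x}^{-2})\). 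That check requires computing the decay of \(I_\alpha\ast w_\mu^p\) in the three regimes \(\mu p<N\), \(=N\), \(>N\), and it is precisely where the hypothesis \(p>1+\max(\alpha,\tfrac{\alpha+2}{2})/(N-2)\) (and, for \(p=2\), the inequality \(N-\alpha>2\) so that \(\varepsilon^{N-\alpha}\le\tfrac12\varepsilon^2\)) enters. Your argument never invokes this hypothesis, which signals that the balance of polynomial rates --- which you yourself identify as the main obstacle --- was not actually carried out.

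The same omission affects \((H_2)\): in that case there is no lower bound on \(V\) to use, so the estimate must close against \(\varepsilon^2\int\abs{\nabla\varphi}^2\) via the classical Hardy inequality, which forces \(H_\varepsilon(x)^2\abs{x}^\alpha\lesssim e^{-2(p-1)\lambda/\varepsilon}\abs{x}^{-2}\), i.e.\ the exponent condition \(2\mu(p-1)-\alpha>2\) on the same \(\mu\) --- again a constraint tied to the hypothesis on \(p\), not something the exponential factor provides for free, since that factor is constant in \(x\) and cannot compensate a right-hand side decaying more slowly than the left. A smaller point: your assertion that the order \(e^{-(2p-1)\lambda/\varepsilon}\) of the first right-hand term is ``precisely why \(p\ge2\) is indispensable'' is off --- that term is exponentially subdominant for every \(p>1\); the constraint \(p\ge2\) comes solely from the factor \(e^{-(p-2)\lambda/\varepsilon}\) in the second term, as you correctly note afterwards. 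In the remaining two cases your outline does match the paper: for \(p=2\), \(\alpha\ge N-2\) one dominates the nonlocal terms by \(V\) using \(V\gtrsim(1+\abs{x})^{-(N-\alpha)}\sim I_\alpha\) with \(\mu>N/2\), and for \(p>2\) with \(V\gtrsim\abs{x}^{-2}\) one tunes \(\mu\) so that \(\mu p>N\) and \(\mu(p-2)+N-\alpha>2\).
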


\begin{proof}
We consider separately three different cases.

\begin{case}
\(N\ge 3\) and
\(p > 1 + \max(\alpha, \frac{\alpha + 2}{2})/(N - 2)\).
\end{case}
For \(\mu>0\) we take \(w_\mu \in C^2(\R^N,\R)\) such that \(w_\mu = 1\) on \(\Bar{\Lambda}\), \(w_\mu > 0\) on \(\Bar{U} \setminus \Lambda\) and for every \(x \in \R^N \setminus \Bar{U}\),
\begin{equation}\label{eq:wmu}
 w_\mu (x) = \frac{1}{\abs{x}^\mu}.
\end{equation}
We also take \(\mu \in (0, N-2)\) and, for sufficiently small \(\varepsilon > 0\), we compute
\begin{equation}
\label{eqLaplacewmu}
 - \varepsilon^2 \Delta w_\mu (x) + (1 - \delta) V w_\mu (x) \ge \varepsilon^2 \chi_{\R^N \setminus \Lambda} (x)
 \frac{\mu (N - 2 - \mu)}{\abs{x}^2} w_\mu (x)\quad\text{for \(x \in \R^N\)}.
\end{equation}
Moreover, if \(\mu p > \alpha\), there exists a constant \(C > 0\), such that for every \(x \in \R^N\)
\[
  (I_\alpha \ast w_\mu^p) (x) w_\mu (x)^{p - 2}
  \le \left\{
  \begin{aligned}
     &\dfrac{C}{\abs{x}^{\mu (2 p - 2) - \alpha}} & &\text{if \(\mu p < N\)},\\
     &\dfrac{C \log (\abs{x} + e)}{\abs{x}^{\mu (p - 2) + N - \alpha}} & &\text{if \(\mu p = N\)},\\
     &\dfrac{C}{\abs{x}^{\mu (p - 2) + N - \alpha}} & &\text{if \(\mu p > N\)}.
  \end{aligned}
  \right.
\]
We deduce therefrom that under the assumption \(p > 1 + \max (\alpha, \frac{\alpha + 2}{2})/(N - 2)\), we can choose \(\mu \in (0, N - 2)\) in such a way that for every \(x \in \R^N \setminus \Lambda\),
\begin{align}
\label{eqIalphaw}
  (I_\alpha \ast w_\mu^p)(x) w_\mu (x)^{p - 2}
  &\le \frac{C'}{\abs{x}^2}&
  & \text{ and }&
  I_\alpha (x) w_\mu (x)^{p - 2} &\le \frac{C'}{\abs{x}^2}.
\end{align}
We define \(\Bar{u}_\varepsilon = e^{- \frac{\lambda}{{\varepsilon}}} w_\mu\).
Then for \(\varepsilon>0\) small enough and \(x \in \R^N \setminus \Lambda\), in view of \eqref{eqIalphaw} we have
\[
   p\varepsilon^{-\alpha}\bigl(I_\alpha \ast (\chi_{\R^N \setminus \Lambda} {\Bar{u}_\varepsilon}^p)\Bigr) (x) \Bar{u}_\varepsilon (x)^{p - 2}
   \le  C' \frac{p\varepsilon^{- \alpha} e^{- (2 p - 2) \frac{\lambda}{{\varepsilon}}}}{\abs{x}^2}
   \le \frac{1}{2}\frac{\varepsilon^2}{\abs{x}^2} .
\]
and
\[
  \nu\varepsilon^{N - \alpha} I_\alpha (x) {\Bar{u}_\varepsilon} (x)^{p - 2}
 \le  \chi_{\R^N \setminus \Lambda} (x)C' \frac{\nu \varepsilon^{N - \alpha} e^{- (p - 2) \frac{\lambda}{{\varepsilon}}}}{\abs{x}^2} \le \frac{1}{2}\frac{\varepsilon^2}{\abs{x}^2}.
\]
When \(p = 2\), this is possible since \(2 < N - \alpha\) by our assumption.
Therefore \(\Bar{u}_\varepsilon\) defines the required supersolution.

Next we verify the properties of the penalization \(H_\varepsilon=\chi_{\R^N\setminus\Lambda} \Bar{u}_\varepsilon^{p - 1}\).
Since \(\mu \in (0, N - 2)\) and \eqref{eqIalphaw}, we can choose \(\mu\) in such a way that
\begin{equation}\label{e:case1}
2\mu(p-1)-\alpha>2\max(\alpha, \frac{\alpha + 2}{2})-\alpha>2
\end{equation}
is satisfied.
Then \(x \mapsto H_\varepsilon(x)^2 \abs{x}^\alpha\in L^q(\R^N)\cap L^\infty(\R^N)\), for some \(q<\tfrac{N}{2}\).
and the homogeneous Sobolev space \(H^1_0 (\R^N)(\R^N)\)
is compactly embedded into the space \(L^2(\R^N,(H_\varepsilon (x)^2\abs{x}^\alpha + \chi_\Lambda(x))\dif x)\) \cite{BonheureVanSchaftingen2010}*{theorem 5(ii)}.
Since \(H^1_{V}(\R^N)\) is continuously embedded into \(H^1_0 (\R^N)(\R^N)\), the assumption \((H_1)\) follows.

Further, \eqref{e:case1} implies that for all \(x\in\R^N\),
\begin{equation*}
H_\varepsilon (x)^2\abs{x}^\alpha\le e^{- \frac{2(p-1)\lambda}{{\varepsilon}}}\frac{c}{\abs{x}^2}.
\end{equation*}
For every \(\kappa>0\), by the classical Hardy inequality in \(\R^N\) with \(N\ge 3\), we find \(\varepsilon_0>0\) such that for all \(\varepsilon\in(0,\varepsilon_0]\)
and for all \(\varphi\in C^\infty_c(\R^N)\) it holds
\[
    \frac{1}{\varepsilon^\alpha}\int_{\R^N} \abs{H_\varepsilon (x)\varphi (x)}^2 \abs{x}^\alpha \dif x\le
    \frac{ce^{- \frac{2(p-1)\lambda}{{\varepsilon}}}}{\varepsilon^\alpha}\int_{\R^N} \frac{\abs{\varphi (x)}^2 }{\abs{x}^2}\dif x
        \le \varepsilon^2\kappa \int_{\R^N}\abs{\nabla \varphi}^2,
 \]
which implies \((H_2)\).

\begin{case}
\(p=2\), \(\alpha \ge N - 2\) and \(\inf_{x \in \R^N} V (x) (1 + \abs{x}^{N - \alpha}) > 0\).
\end{case}

Take \(w_\mu\) as defined in \eqref{eq:wmu} with \(\mu > \frac{N}{2}\).
In view of \eqref{eqLaplacewmu}, since \(\inf_{x \in \R^N} V (x) (1 + \abs{x}^{N - \alpha}) > 0\) and \(\alpha \ge N - 2\),
for a fixed \(\delta\in(0,1)\) and all sufficiently small \(\varepsilon > 0\),
\[
 - \varepsilon^2 \Delta w_\mu  + (1 - \delta) V w_\mu \ge \tfrac{1 - \delta}{2} V w_\mu\quad\text{in \(\R^N\)}.
\]
Moreover, there exists \(C > 0\) such that for every \(x \in \R^N\),
\[
  \bigl(I_\alpha \ast w_{\mu}^p\bigr) (x) \le C I_\alpha (x).
\]
We define again \(\Bar{u}_\varepsilon = e^{- \frac{\lambda}{{\varepsilon}}} w_\mu\).
Since \(\inf_{x \in \R^N \setminus \Lambda} V (x) \abs{x}^{N - \alpha} > 0\),  when \(\varepsilon > 0\) is small enough,
we can estimate on \(\R^N \setminus \Lambda\),
\[
\begin{split}
 p\varepsilon^{-\alpha}\bigl(I_\alpha \ast (\chi_{\R^N \setminus \Lambda} {\Bar{u}_\varepsilon}^2)\bigr)
 &\le p\varepsilon^{-\alpha}
 e^{- 2 \frac{\lambda}{{\varepsilon}}} \bigl(I_\alpha \ast (w_{\mu}^p)\bigr) w_{\mu}^{p - 2}  \le C \varepsilon^{- \alpha} e^{- 2 \frac{\lambda}{{\varepsilon}}} I_\alpha \le \tfrac{1 - \delta}{4} V
\end{split}
\]
and
\[
 \nu\varepsilon^{N - \alpha} I_\alpha {\Bar{u}_\varepsilon}^{p - 2}
 \le C \varepsilon^{N - \alpha} I_\alpha
 \le \tfrac{1 - \delta}{4} V.
\]
This implies that \(\Bar{u}_\varepsilon\) is the required supersolution.

Next we verify the properties of the penalization \(H_\varepsilon=\chi_{\R^N\setminus\Lambda} \Bar{u}_\varepsilon^{p - 1}\).
For all \(x\in\R^N\),
\begin{equation*}
H_\varepsilon (x)^2\abs{x}^\alpha\le
\frac{c e^{- \frac{2\lambda}{{\varepsilon}}}}{(1+\abs{x})^{2\mu-\alpha}}.
\end{equation*}
In particular, since \(\mu>\frac{N}{2}\),
\[
  \lim_{\abs{x}\to\infty}\frac{H_\varepsilon (x)^2\abs{x}^\alpha}{V(x)}=0,
\]
and the compactness of the embedding
\(H^1_{V}(\R^N)\subset L^2(\R^N,(H_\varepsilon (x)^2\abs{x}^\alpha + \chi_\Lambda(x))\dif x)\)
follows from \cite{BonheureVanSchaftingen2010}*{theorem 4(ii)} when \(N\ge 3\), or
\cite{BonheureVanSchaftingen2010}*{p.287} for \(N=1,2\).
This settles \((H_1)\).

Let \(\kappa>0\). Since \(\alpha \ge N - 2\) and \(\inf_{\R^N} V (x) (1 + \abs{x}^{N - \alpha}) > 0\),
we find \(\varepsilon_0>0\)
such that for all \(\varepsilon\in(0,\varepsilon_0]\) and for all \(\varphi\in C^\infty_c(\R^N)\) it holds
\[
    \frac{1}{\varepsilon^\alpha}\int_{\R^N} \abs{H_\varepsilon (x)\varphi (x)}^2 \abs{x}^\alpha \dif x\le
    \frac{ce^{- \frac{2\lambda}{{\varepsilon}}}}{\varepsilon^\alpha}\int_{\R^N} \frac{\abs{\varphi (x)}^2 }{(1+\abs{x})^{N-\alpha}}\dif x
        \le \kappa \int_{\R^N}V \abs{\varphi}^2,
 \]
which implies \((H_2)\).

\begin{case}
\(p > 2\) and \(\liminf_{\abs{x} \to \infty} V (x)\abs{x}^2 > 0\).
\end{case}

Choose \(R > 0\) such that \(\inf_{x \in \R^N \setminus B_R} V (x) \abs{x}^2 > 0\) and \(\bar U\subset B_R\). For \(\mu > 0\) choose \(w_\mu \in C^2 (\R^N)\) such that \(w_\mu > 0\), \(w_\mu = 1\) on \(\Lambda\), \(w_\mu (x) = 2R^2 - \abs{x}^2\) on \(B_{R} \setminus U\)
and \(w_{\mu} (x) = \abs{x}^{-\mu}\) on \(\R^N \setminus B_{2 R}\). For \(\varepsilon > 0\) small enough, we have
\[
  -\varepsilon^2 \Delta w_\mu + (1 - \delta) V w_\mu
  \ge \varepsilon^2 \chi_{B_{R} \setminus U} \tfrac{N}{R^2} w_\mu + \tfrac{1- \delta}{2} V w_\mu\quad\text{in \(\R^N\)}.
\]
If \(\mu p > N\) then there exists \(C > 0\) such that on \(\R^N\)
\[
  I_\alpha \ast w_\mu^p \le C I_\alpha.
\]
We define
\(
 \Bar{u}_\varepsilon
 = e^{- \frac{\lambda}{{\varepsilon}}} w_\mu.
\) and we observe that if we choose \(\mu (p - 2) + N - \alpha > 2\) then in \(\R^N \setminus \Lambda\)
we can estimate for sufficiently small \(\varepsilon>0\),
\[
\begin{split}
 p\varepsilon^{-\alpha}\bigl(I_\alpha \ast (\chi_{\R^N \setminus \Lambda} {\Bar{u}_\varepsilon}^p)\Bigr)\Bar{u}_\varepsilon^{p - 2}
 &\le p\varepsilon^{-\alpha}
 e^{- (2p - 2) \frac{\lambda}{{\varepsilon}}} \bigl(I_\alpha \ast (w_{\mu}^p)\bigr) w_{\mu}^{p - 2} \\
 & \le C \varepsilon^{- \alpha} e^{- (2 p - 2) \frac{\lambda}{{\varepsilon}}} I_\alpha w_{\mu}^{p - 2}\\
 & \le \frac{1}{2} \bigl(\varepsilon^2 \chi_{B_{R} \setminus U} \tfrac{N}{R^2}  + (1- \delta) V \bigr).
\end{split}
\]
and
\[
 \nu\varepsilon^{N - \alpha} I_\alpha {\Bar{u}_\varepsilon}^{p - 2}
 \le C \varepsilon^{N - \alpha} e^{- (p - 2) \frac{\lambda}{{\varepsilon}}} I_\alpha w_{\mu}^{p - 2}
 \le \frac{1}{2} \bigl(\varepsilon^2 \chi_{B_{R} \setminus U} \tfrac{N}{R^2}  + (1- \delta) V \bigr).
\]
The function \(\Bar{u}_\varepsilon\) defines the required supersolution.

Define the penalization \(H_\varepsilon=\chi_{\R^N\setminus\Lambda} \Bar{u}_\varepsilon^{p - 1}\).
Note  that \(\mu\) in the definition of \(w_\mu\) was chosen so that \(\mu p>N\) and
\(\mu (p - 2) + N - \alpha > 2\). These two assumptions imply that
\[
2\mu(p-1)-\alpha>2.
\]
The latter ensures that
\begin{equation}\label{e:case3}
\lim_{\abs{x}\to\infty}\frac{H_\varepsilon (x)^2\abs{x}^\alpha}{V(x)}=0.
\end{equation}
Let \(R>0\) be such that \(\inf_{x \in \R^N \setminus B_R} V (x) \abs{x}^2 > 0\) and \(\bar U\subset B_R\).
Set \(\bar{V}(x)=V(x)+\chi_{B_R}(x)\), so that \(\bar{V}(x)>0\) for all \(x\in\R^N\).
Then the compactness of the embedding
\(H^1_{\Bar{V}}(\R^N)\subset L^2(\R^N,(H_\varepsilon (x)^2\abs{x}^\alpha + \chi_\Lambda(x))\dif x)\)
follows from \eqref{e:case3}, see \cite{BonheureVanSchaftingen2010}*{theorem 4(ii)} if \(N\ge 3\),
or \cite{BonheureVanSchaftingen2010}*{p.287} for \(N=1,2\).
Using proposition~\ref{propositionSobolevScaling} with \(q=2\) and \(\Lambda=B_R\),
we conclude that \(H^1_V(\R^N)\) is continuously embedded into \(H^1_{\Bar{V}}(\R^N)\)
and the assumption \((H_1)\) follows.

Let \(\kappa>0\). Using \eqref{e:case3} we find \(\varepsilon_0>0\)
such that for all \(\varepsilon\in(0,\varepsilon_0]\) and for all \(\varphi\in C^\infty_c(\R^N)\) it holds
\[
    \frac{1}{\varepsilon^\alpha}\int_{\R^N} \abs{H_\varepsilon (x)\varphi (x)}^2 \abs{x}^\alpha \dif x\le
    \frac{ce^{- \frac{2(p-1)\lambda}{{\varepsilon}}}}{\varepsilon^\alpha}\int_{\R^N} \frac{\abs{\varphi (x)}^2 }{(1+\abs{x})^{2}}\dif x
        \le \frac{\kappa}{2} \int_{\R^N}\Bar{V}  \abs{\varphi}^2.
 \]
Using again proposition~\ref{propositionSobolevScaling} with \(q=2\) and \(\Lambda=B_R\), we obtain
\[
\frac{\kappa}{2} \int_{\R^N}\Bar{V}\varphi^2 \dif x\le
\kappa\Big( \int_{\R^N}\varepsilon^2\abs{\nabla \varphi}^2+V\varphi^2 \Big),
\]
which concludes the proof of \((H_2)\).
\end{proof}

\subsection{Proof of the existence theorems}

We are going to show that the family of  supersolutions \(\Bar{u}_\varepsilon\) constructed in the previous section could be extended to a family of supersolutions in \(\R^N\setminus B (a_\varepsilon, R\varepsilon)\) which are separated away from zero
on \(a_\varepsilon\) and controlled outside \(\Lambda\) by the penalizations \(H_\varepsilon\). These properties are sufficient
in order to use them as the barriers for the family of solutions \(u_\varepsilon\) of the penalized equation \eqref{equationPenalized}

\begin{proposition}[Construction of barrier functions]
\label{propositionBarriers}
In addition to all the assumptions of proposition~\ref{propositionConstruction},
let \(R > 0\) and \((a_\varepsilon)_{\varepsilon>0}\) be a family of points in \(\Lambda\) such that
\(\liminf_{\varepsilon \to 0}d(a_\varepsilon, \partial \Lambda)>0\).
Then for all sufficiently small \(\varepsilon > 0\), there exists \(\Bar{U}_\varepsilon \in H^1_{V} (\R^N)\cap C^{1,1}(\R^N)\), \(\Bar{U}_\varepsilon>0\) such that
\[
\left\{
\begin{aligned}
 -\varepsilon^2 \Delta \Bar{U}_\varepsilon + (1-\delta) V \Bar{U}_\varepsilon & \ge
 \bigl(p \varepsilon^{-\alpha} I_\alpha \ast (H_\varepsilon \Bar{U}_\varepsilon) + \nu \varepsilon^{N - \alpha} I_\alpha \bigr) H_\varepsilon
 & & \text{in \(\R^N\setminus B (a_\varepsilon, R\varepsilon)\)},\\
 \Bar{U}_\varepsilon & \ge 1 & &\text{on \(B (a_\varepsilon, R\varepsilon)\)}.
\end{aligned}
\right.
\]
Moreover, \(\Bar{U}_\varepsilon^{p-1}<H_\varepsilon\) in \(\R^N\setminus\Lambda\).
\end{proposition}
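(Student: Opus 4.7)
The plan is to construct $\bar U_\varepsilon = \theta \bar u_\varepsilon + \phi_\varepsilon$ with $\theta \in (0, 1)$ and $\phi_\varepsilon$ an Agmon--type inner bump equal to $1$ on $B(a_\varepsilon, R\varepsilon)$ and decaying exponentially away from $a_\varepsilon$. The first step is to rewrite the supersolution inequality of Proposition~\ref{propositionConstruction} in the fully linear form
\[
 -\varepsilon^2\Delta \bar u_\varepsilon + (1-\delta)V\bar u_\varepsilon \ge \bigl(p\varepsilon^{-\alpha} I_\alpha\ast(H_\varepsilon\bar u_\varepsilon) + \nu\varepsilon^{N-\alpha}I_\alpha\bigr)H_\varepsilon
\]
via the identities $\chi_{\R^N\setminus\Lambda}\bar u_\varepsilon^p = H_\varepsilon\bar u_\varepsilon$ and $\chi_{\R^N\setminus\Lambda}\bar u_\varepsilon^{p-1} = H_\varepsilon$. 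By linearity, multiplying by $\theta$ reduces the inequality sought for $\bar U_\varepsilon$ to the requirement that $\phi_\varepsilon$ is a supersolution of
\[
 -\varepsilon^2\Delta\phi_\varepsilon + (1-\delta)V\phi_\varepsilon \ge (1-\theta)\nu\varepsilon^{N-\alpha}I_\alpha H_\varepsilon + p\varepsilon^{-\alpha}H_\varepsilon\bigl(I_\alpha\ast(H_\varepsilon\phi_\varepsilon)\bigr)
\]
in $\R^N\setminus B(a_\varepsilon, R\varepsilon)$.

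For the construction of $\phi_\varepsilon \in C^{1,1}(\R^N)$, I set $\phi_\varepsilon \equiv 1$ on $B(a_\varepsilon, R\varepsilon)$ and $\phi_\varepsilon(x) = \exp(-\beta(|x-a_\varepsilon|-R\varepsilon)/\varepsilon)$ on an annular region contained in $\bar\Lambda$, with $\beta > 0$ satisfying $\beta^2 < (1-\delta)\inf_\Lambda V$ (possible since $\inf_\Lambda V > 0$); a direct computation then yields $-\varepsilon^2\Delta\phi_\varepsilon + (1-\delta)V\phi_\varepsilon \ge 0$ there, which settles the inequality inside $\Lambda\setminus B(a_\varepsilon, R\varepsilon)$ since $H_\varepsilon \equiv 0$ on $\Lambda$. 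Outside $\Lambda$ I extend $\phi_\varepsilon$ by $\sigma \bar u_\varepsilon$ for some small $\sigma \in (0, 1-\theta)$, smoothed across $\partial\Lambda$ to preserve $C^{1,1}$-regularity; this extension automatically inherits the needed supersolution property from $\bar u_\varepsilon$.

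The three conclusions then follow: (a) the nonlocal supersolution inequality for $\bar U_\varepsilon$ is obtained by linearity, where the cross term $p\varepsilon^{-\alpha}(I_\alpha\ast(H_\varepsilon\phi_\varepsilon))H_\varepsilon$ is absorbed using $\phi_\varepsilon \le \sigma\bar u_\varepsilon$ outside $\Lambda$; (b) $\bar U_\varepsilon \ge 1$ on $B(a_\varepsilon, R\varepsilon)$ since $\phi_\varepsilon \equiv 1$ there; and (c) in $\R^N\setminus\Lambda$, $\bar U_\varepsilon = (\theta + \sigma)\bar u_\varepsilon < \bar u_\varepsilon$ since $\theta + \sigma < 1$, yielding $\bar U_\varepsilon^{p-1} < \bar u_\varepsilon^{p-1} = H_\varepsilon$. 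Positivity and membership in $H^1_V(\R^N)$ are inherited from $\bar u_\varepsilon$ and the compact-support-plus-decay structure of $\phi_\varepsilon$.

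The main obstacle is matching the inner Agmon bump and the outer scaled supersolution across $\partial\Lambda$. The Agmon rate $\beta$ is constrained above by $\sqrt{(1-\delta)\inf_\Lambda V}$, whereas on $\partial\Lambda$ the inner bump value is of order $e^{-\beta d_0/\varepsilon}$ and must be compatible with $\sigma\bar u_\varepsilon|_{\partial\Lambda}\sim \sigma e^{-\lambda/\varepsilon}$, which forces the compatibility $\beta d_0 \ge \lambda$ with $d_0 = \tfrac{1}{2}\liminf_{\varepsilon\to 0}\dist(a_\varepsilon,\partial\Lambda) > 0$. This is arranged by first fixing $\beta$ from the lower bound of $V$ on $\Lambda$ and then invoking Proposition~\ref{propositionConstruction} with $\lambda$ chosen sufficiently small from the outset, which is allowed since $\lambda > 0$ is a free parameter there; a standard mollification around $\partial\Lambda$ preserves the inequalities across the interface.
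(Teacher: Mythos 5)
Your additive decomposition \(\Bar{U}_\varepsilon=\theta\Bar{u}_\varepsilon+\phi_\varepsilon\) and the reduction to a linear inequality via \(\chi_{\R^N\setminus\Lambda}\Bar u_\varepsilon^{p}=H_\varepsilon\Bar u_\varepsilon\) are sound, and you correctly identify the key compatibility constraint between the Agmon rate and the parameter \(\lambda\) (the paper's version is \(\lambda<mr\)). The genuine gap is in the gluing of the inner bump to the outer piece. Your profile \(e^{-\beta(\abs{x-a_\varepsilon}-R\varepsilon)/\varepsilon}\) has nonvanishing (inward-pointing) radial derivative at the outer edge of the annulus, while the function you attach there, \(\sigma\Bar u_\varepsilon\), is \emph{constant} on a neighbourhood of \(\partial\Lambda\) (since \(\Bar u_\varepsilon=e^{-\lambda/\varepsilon}w_\mu\) with \(w_\mu\equiv 1\) on \(\Bar\Lambda\)). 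At the interface the normal derivative therefore jumps \emph{upward} by \(\beta\phi_\varepsilon/\varepsilon\), so \(-\varepsilon^2\Delta\phi_\varepsilon\) acquires a negative surface measure of that strength; the glued function is not a distributional supersolution across the interface, and no zeroth-order term can absorb a singular measure. ``Standard mollification'' does not repair this: it merely smears the negative Dirac layer into a large negative bulk term of order \(\beta/(\varepsilon\cdot\text{width})\), which still overwhelms \((1-\delta)V\phi_\varepsilon\). Taking \(\min\) or \(\max\) of the two pieces does not help either (\(\max\) of supersolutions is not a supersolution; \(\min\) would collapse \(\phi_\varepsilon\) to \(\sigma e^{-\lambda/\varepsilon}\) on the inner ball and destroy \(\Bar U_\varepsilon\ge 1\) there). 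This is exactly the obstruction the paper's choice \(h_\varepsilon(y)=\cosh\bigl(m(r-\abs{y})/\varepsilon\bigr)\) is engineered to avoid: \(\cosh\) solves \(-\varepsilon^2 h''+m^2h=0\) and has vanishing derivative at \(\abs{y}=r\), so it extends by the constant \(1\) in \(C^{1,1}\) with no unfavourable kink; the paper then takes the \emph{product} \(\Bar U_\varepsilon=c_\varepsilon\,\Bar u_\varepsilon h_\varepsilon(\cdot-a_\varepsilon)\) (harmless, since \(\Bar u_\varepsilon\) is constant where \(h_\varepsilon\) is nonconstant). Your argument can be repaired by replacing the pure exponential with such a \(\cosh\) profile normalized to match \(\sigma\Bar u_\varepsilon\) at \(\abs{x-a_\varepsilon}=r\), at which point it becomes the paper's construction in additive form.

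A secondary, fixable inconsistency: outside \(\Lambda\) the piece \(\phi_\varepsilon=\sigma\Bar u_\varepsilon\) must carry the term \((1-\theta)\nu\varepsilon^{N-\alpha}I_\alpha H_\varepsilon\), and \(\sigma\) times the inequality of proposition~\ref{propositionConstruction} only supplies \(\sigma\nu\varepsilon^{N-\alpha}I_\alpha H_\varepsilon\); this forces \(\sigma\ge 1-\theta\), i.e.\ \(\theta+\sigma\ge 1\), which contradicts the requirement \(\theta+\sigma<1\) you use to get \(\Bar U_\varepsilon^{p-1}<H_\varepsilon\). You must invoke proposition~\ref{propositionConstruction} with the free parameter \(\nu\) enlarged (say replaced by \(2\nu\)) so that \(\theta+\sigma\in[\tfrac12,1)\) suffices; as written the two constraints on \(\sigma\) are incompatible.
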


\begin{proof}
We choose \(m > 0\) so that
\[
 m^2 < (1-\delta) \inf_\Lambda V,
\]
\(r>0\) such that
\[
r <\frac{1}{2}\liminf_{\varepsilon \to 0}d(a_\varepsilon, \partial \Lambda).
\]
Let \(\Bar{u}_\varepsilon\) be a family of supersolutions of given by proposition~\ref{propositionConstruction} for some \(\lambda < mr\).
We define
\[
h_\varepsilon(y)=
\begin{cases}
\cosh \frac{m(r-\abs{y})}{\varepsilon} & \text{if \(x \in B(a_\varepsilon, r)\)}, \smallskip\\
1 & \text{if \(x \in \R^N\setminus B(a_\varepsilon, r)\)}.
\end{cases}
\]
Then \(h_\varepsilon\in C^{1, 1}(\R^N)\) and
\begin{equation}
\label{heps}
 -\varepsilon^2 \Delta h_\varepsilon +m^2\chi_{B_r}h_\varepsilon \ge 0\quad\text{in \(\R^N\)}.
\end{equation}
Let \(\Bar{u}_\varepsilon\) be the supersolution, constructed in Proposition~\ref{propositionConstruction}.
Define
\begin{equation*}
\Bar{U}_\varepsilon(x)=2\frac{\Bar{u}_\varepsilon(x)h_\varepsilon(x-a_\varepsilon)}{e^{-\frac{\lambda}{{\varepsilon}}}\cosh \frac{m r}{\varepsilon}}.
\end{equation*}
For small \(\varepsilon>0\) we have \(\Bar{U}_\varepsilon(x)\ge 1\) in \(B(a_\varepsilon,R\varepsilon)\) and \(B(a_\varepsilon, 2r) \subset \Lambda\),
so using the construction of \(\Bar{u}_\varepsilon\) and \eqref{heps} we obtain
\begin{align*}
  - \varepsilon^2 \Delta \Bar{U}_\varepsilon + (1 - \delta) V \Bar{U}_\varepsilon &\ge
  \bigl(p \varepsilon^{-\alpha} I_\alpha \ast (H_\varepsilon \Bar{U}_\varepsilon) + \nu \varepsilon^{N - \alpha} I_\alpha \bigr) H_\varepsilon &
  & \text{in \(\R^N \setminus B (a_\varepsilon, R\varepsilon)\)}.
\end{align*}
Clearly, \(\Bar{U}_\varepsilon^{p-1}\le H_\varepsilon=\Bar{u}_\varepsilon^{p-1}\) in \(\R^N\setminus\Lambda\) for all sufficiently small \(\varepsilon>0\).
\end{proof}

\begin{proof}[Proof of theorems~\ref{theoremLinear} and \ref{theoremSuperlinear}]
Propositions \ref{propositionSubsolutions} and \ref{propositionBarriers} imply
via comparison principle of proposition~\ref{propositionComparison},
that for all sufficiently small \(\varepsilon>0\) solutions \(u_\varepsilon\) of the penalized equation \eqref{equationPenalized}
satisfy
\[
  u_\varepsilon\le \Bar{U}_\varepsilon\quad\text{in } \R^N \setminus B (a_\varepsilon, R\varepsilon).
\]
Since \(\Bar{U}_\varepsilon^{p-1}\le H_\varepsilon\) in \(\R^N\setminus\Lambda\), we conclude that
\(u_\varepsilon\) is a solution of the original problem \eqref{equationNLChoquard}.
\end{proof}

\section{Nonexistence of concentrating solutions}

\subsection{Critical potential well at a point}

In this section we show that for \(p = 2\), if \(\alpha > N - 2\) and if the potential \(V\) vanishes at some point \(a_* \in \R^N\) strongly enough, then the problem \eqref{equationNLChoquard} cannot have a family of solutions that concentrates somewhere in \(\R^N \setminus \{a_*\}\).
Indeed, we would expect a family of solutions that concentrates on some compact set \(K \subset \R^N\) to satisfy
\[
 \liminf_{\varepsilon \to 0} \frac{1}{\varepsilon^N} \int_{K} \abs{u_\varepsilon}^2 > 0,
\]
while we obtain a contradicting asymptotic estimate.

\begin{proposition}\label{propositionNonConcentration}
Let \(N \in \N\), \(\alpha \in (0, N)\), \(p = 2\), \(V \in C(\R^N;[0, \infty))\).
Assume that
\(\alpha > N - 2\) and, as \(\rho \to 0\),
\[
 \frac{1}{\rho^N} \int_{B_\rho (a)} V (x) = o (\rho^{\frac{4}{\alpha + 2 - N} - 2}).
\]
If \((u_\varepsilon)_{\varepsilon \in (0, \varepsilon_0)}\)
is a family in \(\subset H^1_\mathrm{loc}(\R^N) \cap L^2((1+\abs{x})^{-(N-\alpha)}\dif x)\) of positive solutions to \eqref{equationNLChoquard}, then
for every compact set \(K \subset \R^N\setminus\{a_*\}\), as \(\varepsilon\to 0\),
\[
 \int_{K} \abs{u_\varepsilon}^2 = o (\varepsilon^N).
\]
\end{proposition}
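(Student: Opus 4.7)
The plan is to argue by contradiction, exploiting the long-range action of the Riesz potential to turn any concentrated $L^2$-mass on a compact set away from $a_*$ into a pointwise lower bound for the nonlocal coefficient near $a_*$. Suppose there exist a compact set $K \subset \R^N \setminus \{a_*\}$, a sequence $\varepsilon_n \to 0$, and a constant $c > 0$ with $\int_K \abs{u_{\varepsilon_n}}^2 \ge c\varepsilon_n^N$. Setting $d := \dist(a_*, K) > 0$, the compactness of $K$ provides an $R < \infty$ with $\abs{x - y} \le R$ whenever $x \in B_{d/2}(a_*)$ and $y \in K$, so
\[
  \frac{1}{\varepsilon_n^\alpha} \bigl(I_\alpha \ast \abs{u_{\varepsilon_n}}^2\bigr)(x)
  \ge \frac{A_\alpha}{R^{N - \alpha}\,\varepsilon_n^\alpha} \int_K \abs{u_{\varepsilon_n}}^2
  \ge c'\varepsilon_n^{N - \alpha}
\]
uniformly in $x \in B_{d/2}(a_*)$ and $n$. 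Substituting this bound into \eqref{equationNLChoquard} with $p = 2$ yields the linear differential inequality
\[
  -\varepsilon_n^2 \Delta u_{\varepsilon_n} + V u_{\varepsilon_n} \ge c'\varepsilon_n^{N - \alpha}\,u_{\varepsilon_n} \qquad \text{in } B_{d/2}(a_*).
\]

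I would then convert this into a scalar inequality using Picone's identity. Let $\phi_\rho \in H^1_0(B_\rho(a_*))$ be the first Dirichlet eigenfunction of $-\Delta$ on $B_\rho(a_*)$, normalised so that $\int \phi_\rho^2 = 1$; by scaling, $\int \abs{\nabla \phi_\rho}^2 = \Lambda_N/\rho^2$ and $\norm{\phi_\rho}_{L^\infty}^2 \le C_N/\rho^N$, with $\Lambda_N := \lambda_1(B_1)$. Since $u_{\varepsilon_n}$ is a positive solution, standard regularity for Choquard-type equations (cf.\ \cite{MVSGNLSNE}*{proposition~4.1}) together with the strong maximum principle makes $u_{\varepsilon_n}$ continuous and bounded away from $0$ on compact subsets of $B_{d/2}(a_*)$, so $\phi_\rho^2/u_{\varepsilon_n}$ is an admissible $H^1_0(B_\rho)$ test function for every $\rho < d/2$. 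Testing the weak inequality against it and invoking the pointwise Picone inequality $\scalprod{\nabla u}{\nabla(\phi^2/u)} \le \abs{\nabla \phi}^2$ produces the key estimate
\[
  \varepsilon_n^2 \frac{\Lambda_N}{\rho^2} + \frac{C_N}{\rho^N} \int_{B_\rho(a_*)} V \ge c'\varepsilon_n^{N - \alpha}.
\]

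The conclusion now comes from a precise scaling choice. The gradient term matches the right-hand side when $\rho \sim \varepsilon_n^{(\alpha + 2 - N)/2}$, so I set $\rho_n := \kappa\varepsilon_n^{(\alpha + 2 - N)/2}$ with $\kappa > 0$ to be fixed. The algebraic identity $\tfrac{\alpha + 2 - N}{2} \cdot \bigl(\tfrac{4}{\alpha + 2 - N} - 2\bigr) = N - \alpha$ gives $\rho_n^{\frac{4}{\alpha + 2 - N} - 2} = \kappa^{\frac{4}{\alpha + 2 - N} - 2}\varepsilon_n^{N - \alpha}$, so the decay hypothesis on $V$ yields $\rho_n^{-N} \int_{B_{\rho_n}(a_*)} V = o(\varepsilon_n^{N - \alpha})$ as $n \to \infty$ at fixed $\kappa$. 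Inserting this in the key estimate, dividing by $\varepsilon_n^{N - \alpha}$ and passing to the limit gives $\Lambda_N/\kappa^2 \ge c'$, which fails as soon as $\kappa^2 > \Lambda_N/c'$; that is the sought contradiction.

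The step I expect to be most delicate is the Picone application, which requires $u_{\varepsilon_n}$ to be continuous and bounded away from $0$ on each small ball around $a_*$ so that $\phi_\rho^2/u_{\varepsilon_n}$ is a legitimate $H^1_0$ test function. The weighted integrability $u_\varepsilon \in L^2((1 + \abs{x})^{-(N - \alpha)} \dif x)$ assumed in the statement is exactly what makes $I_\alpha \ast \abs{u_\varepsilon}^2$ locally bounded, so the required regularity and strict positivity follow from a short Brezis--Kato or Moser iteration together with the strong maximum principle. Everything else is dictated by the matching of the gradient length scale $\varepsilon^{(\alpha + 2 - N)/2}$ against the precise vanishing rate imposed on $V$, which is exactly what makes the exponent $\tfrac{4}{\alpha + 2 - N} - 2$ in the hypothesis sharp.
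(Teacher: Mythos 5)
Your proposal is correct and follows essentially the same route as the paper: the paper also tests the equation against $\varphi^2/u_\varepsilon$ (the ground-state/Picone transformation, citing Agmon), lower-bounds the Riesz potential term by the mass on $K$, and chooses the test function concentrated at the scale $\rho\sim\varepsilon^{(\alpha+2-N)/2}$ around $a_*$ to balance the gradient term against $\varepsilon^{N-\alpha}$. The only cosmetic differences are that you argue by contradiction with the first Dirichlet eigenfunction while the paper argues directly with a rescaled smooth bump.
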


The assumptions of this proposition~are satisfied in particular if \(\alpha \ge N - \frac{2}{3}\) and \(V \in C^1 (\R^N)\) vanishes at \(a_*\), or if \(\alpha > N - 1\) and \(V \in C^2 (\R^N)\) vanishes at \(a_*\).

\begin{proof}[Proof of proposition~\ref{propositionNonConcentration}]
We first apply a ground-state transformation to \eqref{equationNLChoquard}, that is, given \(\varphi \in C^\infty_c (\R^N)\), we test the equation
\eqref{equationNLChoquard} against the compactly supported function \(\varphi^2/u_\varepsilon \in H^1 (\R^N)\) to obtain the inequality \cite{Agmon1983}*{theorems 3.1 and 3.3} (see also \cite{MVSGNLSNE}*{proposition~3.1})
\[
 \int_{\R^N} \varepsilon^2 \abs{\nabla \varphi}^2 + V \abs{\varphi}^2
 \ge \varepsilon^{-\alpha} \int_{\R^N} \bigl(I_\alpha \ast \abs{u_\varepsilon}^2 \bigr) \varphi^2.
\]
We choose now the function \(\psi \in C^\infty_c (\R^N) \setminus \{0\}\) such that \(\supp \varphi\subset B_1\). We apply the previous inequality to the function
\(\varphi_\rho : \R^N \to \R\) defined for \(\rho > 0\) and \(x \in \R^N\) by
\[
 \varphi_\rho(x) = \psi \Bigl(\frac{x - a_*}{\rho}\Bigr),
\]
and we deduce if \(B_{2 \rho} (a_*) \cap K = \emptyset\) that for every \(\varepsilon > 0\),
\[
  \frac{1}{\varepsilon^N} \int_{K} \abs{u_\varepsilon}^2
 \le C \frac{\varepsilon^{2 + \alpha - N}}{\rho^{2}} + \frac{1}{\varepsilon^{N - \alpha}} o (\rho^{\frac{4}{\alpha + 2 - N} - 2}),
\]
as \(\rho \to 0\) uniformly in \(\varepsilon > 0\).
Given \(\eta > 0\), we take \(\rho = \varepsilon^\frac{\alpha + 2 - N}{2}/\eta^\frac{1}{2}\) in the previous inequality and we obtain
the asymptotic estimate
\[
  \frac{1}{\varepsilon^N} \int_{K} \abs{u_\varepsilon}^2
 \le C \eta  + o (\eta^{\frac{2}{\alpha + 2 - N} - 1}),
\]
as \((\varepsilon, \eta) \to (0, 0)\).
Since \(\alpha < N\), \(\frac{2}{\alpha + 2 - N} - 1 > 0\) and the conclusion follows.
\end{proof}

In the limiting case \(\alpha = N - 2\), the same techniques limits the mass available in \(K\).

\begin{proposition}\label{propositionNonConcentrationLimit}
Let \(N \in \N\), \(\alpha = N - 2\), \(p = 2\). There exists a constant \(C > 0\) such that for every \(V \in C(\R^N;[0, \infty))\) such that \(V = 0\) on \(B_{a_*} (\rho)\) for some \(a_* \in \R^N\) and \(\rho > 0\), for every positive solution \(u_\varepsilon\in H^1_\mathrm{loc}(\R^N) \cap L^2((1+\abs{x})^{-2}\dif x)\)
of \eqref{equationNLChoquard} and for every compact set \(K \subset \R^N\), one has
\[
 \frac{1}{\varepsilon^N} \int_{K} \abs{u_\varepsilon}^2 \le \frac{C \dist (B_\rho (a), K)^2}{\rho^2 }.
\]
\end{proposition}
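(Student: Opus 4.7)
The plan is to adapt the Agmon-type ground state test-function argument used in proposition~\ref{propositionNonConcentration} to the borderline exponent \(\alpha = N-2\), exploiting the now stronger hypothesis that \(V\) vanishes on an entire ball \(B_\rho(a)\) rather than just pointwise at a single point. First, I would test the Choquard equation \eqref{equationNLChoquard} against the admissible function \(\varphi^2/u_\varepsilon\) with \(\varphi \in C^\infty_c(\R^N)\) nonnegative, producing, as in \cite{MVSGNLSNE}*{proposition~3.1} (see also \cite{Agmon1983}*{theorems 3.1 and 3.3}), the Agmon-type inequality
\[
\int_{\R^N} \varepsilon^2 \abs{\nabla \varphi}^2 + V \varphi^2 \ge \varepsilon^{-\alpha} \int_{\R^N} \bigl(I_\alpha \ast \abs{u_\varepsilon}^2\bigr) \varphi^2.
\]

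Next, fix a nonzero \(\psi \in C^\infty_c(B_1)\) and take the rescaled cut-off \(\varphi(x) = \psi((x-a)/\rho)\), which is supported in \(B_\rho(a)\). Because \(V \equiv 0\) there, the potential term on the left disappears and only the Dirichlet contribution \(\varepsilon^2 \rho^{N-2} \int \abs{\nabla \psi}^2\) remains. On the right-hand side, using the explicit form \(I_{N-2}(z) = A_{N-2}/\abs{z}^2\), the triangle inequality \(\abs{x-y} \le \abs{a-y} + \rho\) for \(x \in B_\rho(a)\), and Fubini, one obtains
\[
\int_{\R^N} \bigl(I_\alpha \ast \abs{u_\varepsilon}^2\bigr)\varphi^2 \ge A_{N-2}\, \rho^N \Bigl(\int_{\R^N} \psi^2\Bigr) \int_{K} \frac{\abs{u_\varepsilon(y)}^2}{(\abs{a-y}+\rho)^2}\dif y.
\]
Combining both estimates with \(\alpha = N-2\) and rearranging yields
\[
\int_{K} \frac{\abs{u_\varepsilon(y)}^2}{(\abs{a-y}+\rho)^2}\dif y \le \frac{C\, \varepsilon^N}{\rho^2},
\]
with a constant depending only on the ratio \(\int \abs{\nabla \psi}^2/\int \psi^2\), and in particular independent of \(V\), \(a\), \(\rho\), \(\varepsilon\), and \(u_\varepsilon\).

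Finally, the triangle inequality \(\abs{a-y} \le \rho + \dist(y, B_\rho(a))\) bounds the weight \((\abs{a-y}+\rho)^{-2}\) from below by \((2\rho + \dist(B_\rho(a),K))^{-2}\) uniformly in \(y \in K\); multiplying through and absorbing the factor of two into \(C\) then produces the claimed inequality. The subtle point in this argument, and what makes the borderline case \(\alpha = N-2\) qualitatively different from the supercritical case \(\alpha > N-2\) of proposition~\ref{propositionNonConcentration}, is that the Dirichlet-energy scale \(\rho^{N-2}\) and the pointwise Riesz-kernel weight \(\abs{z}^{-2}\) balance exactly: there is no longer any small factor \(\rho^{\text{positive power}}\) to exploit when refining the choice of \(\rho\), so one obtains a scale-invariant estimate in which the Pekar-type constant \(A_{N-2}\) appears naturally. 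Producing this \(\varepsilon\)-uniform, rather than vanishing, bound is the main hurdle, and it is exactly the ingredient required downstream in the proof of theorem~\ref{theoremNonConcentrationSpecificInfinity}.
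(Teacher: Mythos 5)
Your argument is correct and follows the paper's proof essentially verbatim: the Agmon ground-state transformation, then the rescaled cut-off \(\psi((\cdot-a)/\rho)\) supported in the ball where \(V\) vanishes, with the two sides scaling as \(\varepsilon^2\rho^{N-2}\) and \(\varepsilon^{-(N-2)}\rho^{N}\) so that the borderline exponent \(\alpha=N-2\) yields an \(\varepsilon\)-uniform rather than vanishing bound. One caveat on your last line: for \(y\in K\) one only has \(\dist(y,B_\rho(a))\ge\dist(B_\rho(a),K)\), so the lower bound \((\abs{a-y}+\rho)^{-2}\ge(2\rho+\dist(B_\rho(a),K))^{-2}\) requires reading \(\dist(B_\rho(a),K)\) as \(\sup_{y\in K}\dist(y,B_\rho(a))\) rather than the infimum --- but this imprecision is already present in the statement itself, and your intermediate weighted estimate \(\int_K\abs{u_\varepsilon}^2(\rho+\abs{a-\cdot})^{-2}\le C\varepsilon^N\rho^{-2}\) is the correct content.
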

\begin{proof}
As in the proof of proposition~\ref{propositionNonConcentration}, we obtain by a ground-state transformation applied to \eqref{equationNLChoquard} for every \(\varphi \in C^\infty_c (\R^N)\) the inequality
\[
 \int_{\R^N} \varepsilon^2 \abs{\nabla \varphi}^2 + V \abs{\varphi}^2
 \ge \varepsilon^{-(N - 2)} \int_{\R^N} \bigl(I_\alpha \ast \abs{u_\varepsilon}^2 \bigr) \varphi^2,
\]
We apply the previous inequality to the function
\(\varphi_\rho : \R^N \to \R\) defined for \(\rho > 0\) and \(x \in \R^N\) by
\[
 \varphi_\rho(x) = \psi \Bigl(\frac{x - a_*}{\rho}\Bigr),
\]
where \(\psi \in C^\infty_c (\R^N) \setminus \{0\}\) is a fixed function, and we conclude that
\[
  \frac{1}{\varepsilon^N} \int_{K} \abs{u_\varepsilon}^2
 \le \frac{C \dist (B_\rho (a), K)^2}{\rho^2 }.\qedhere
\]
\end{proof}

\subsection{Critical potential well at infinity}

Finally, we prove that for \(p = 2\) and \(\alpha = N - 2\),
if the potential \(V\) decays at infinity faster then the inverse square,
then \eqref{equationNLChoquard} cannot have concentrating solutions.
In this case, the nonlocal term forces the rescaled mass to vanish on every compact subset \(K\subset\R^N\).

\begin{proposition}
\label{propositionStrongWellCritical}
Let \(N \ge 3\), \(p = 2\), \(V \in C(\R^N;[0, \infty))\).
Assume that \(\alpha = N - 2\) and
\[
 \lim_{R \to \infty} \frac{1}{R^{N - 2}} \int_{B_{2 R} \setminus B_R} V = 0.
\]
If \(u_\varepsilon\in H^1_\mathrm{loc}(\R^N) \cap L^2((1+\abs{x})^{-2}\dif x)\)  is a solution to \eqref{equationNLChoquard},
then
\[
 \frac{1}{\varepsilon^N} \int_{\R^N} \abs{u_\varepsilon}^2 \le \Gamma(\tfrac{N - 2}{2})\pi^{N/2}2^{N - 2} \Bigl(\frac{N - 2}{2} \Bigr)^2.
\]
\end{proposition}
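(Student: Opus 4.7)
The plan is to apply the same ground-state transformation used in the proofs of propositions~\ref{propositionNonConcentration} and \ref{propositionNonConcentrationLimit}, but now with test functions that nearly saturate the classical Hardy inequality. The key observation is the explicit identity $I_{N-2}(x) = 1/(C_N \abs{x}^2)$ with $C_N := \Gamma(\tfrac{N-2}{2})\pi^{N/2} 2^{N-2}$, which turns the nonlocal term into an inverse-square potential and causes the sharp Hardy constant $(\tfrac{N-2}{2})^2$ to appear in the limit.

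First I would fix $\rho>0$, set $M_\rho := \varepsilon^{-N}\int_{B_\rho}\abs{u_\varepsilon}^2$, and derive from the ground-state inequality, for any $\varphi\in C^\infty_c(\R^N)$ supported in $\{\abs{x}>R\}$ with $R \gg \rho$,
\[
 \int_{\R^N} \varepsilon^2\abs{\nabla\varphi}^2 + V\varphi^2 \ge \frac{\varepsilon^2 M_\rho}{C_N(1+\rho/R)^2}\int_{\R^N} \frac{\varphi^2}{\abs{x}^2},
\]
using $\abs{x-y}\le\abs{x}(1+\rho/R)$ for $\abs{x}\ge R$ and $\abs{y}\le\rho$.

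Next I would take $\varphi_{R,L}(x)=\tau_{R,L}(\abs{x})\abs{x}^{-(N-2)/2}$, where $\tau_{R,L}$ is a smooth radial cutoff vanishing outside $[R,L]$ and equal to $1$ on $[2R,L/2]$. An integration by parts eliminates the cross term and yields
\[
 \int_{\R^N} \abs{\nabla\varphi_{R,L}}^2 = \omega_{N-1}\int_R^L \Bigl(\abs{\tau'_{R,L}}^2\, r + \Bigl(\tfrac{N-2}{2}\Bigr)^2 \frac{\tau_{R,L}^2}{r}\Bigr)\dif r,
\]
while $\int_{\R^N} \varphi_{R,L}^2/\abs{x}^2 = \omega_{N-1}\int_R^L \tau_{R,L}^2/r \, \dif r \gtrsim \ln(L/R)$, so the Hardy ratio tends to $(\tfrac{N-2}{2})^2$ as $L/R\to\infty$. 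For the potential term, a dyadic decomposition gives
\[
 \int_{\R^N} V\varphi_{R,L}^2 \le \log_2(L/R)\cdot\sup_{r\ge R}\frac{1}{r^{N-2}}\int_{B_{2r}\setminus B_r} V,
\]
and the hypothesis makes this $o(\log_2(L/R))$ as $R\to\infty$.

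The final task is to choose $L=L(R)$ diverging fast enough that the Hardy defect $O(1/\ln(L/R))$ becomes negligible, yet slowly enough that the ratio of $\int V\varphi_{R,L}^2$ to $\int\varphi_{R,L}^2/\abs{x}^2$ still tends to zero; for instance, taking $\ln(L/R)$ equal to the inverse square root of $\sup_{r\ge R}r^{-(N-2)}\int_{B_{2r}\setminus B_r} V$ achieves both. Inserting $\varphi_{R,L}$ into the ground-state inequality, dividing by $\varepsilon^2\int\varphi_{R,L}^2/\abs{x}^2$, and letting $R\to\infty$ yields $M_\rho \le C_N(\tfrac{N-2}{2})^2$; monotone convergence in $\rho$ gives the claimed bound. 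The main obstacle is precisely this balancing, since the hypothesis provides no explicit rate for the decay of the annular integrals of $V$, so the window width $L/R$ must be adapted to this unknown rate.
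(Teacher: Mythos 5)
Your proof is correct and rests on the same two pillars as the paper's argument: the ground-state (Agmon) transformation and the sharp constant \(\bigl(\frac{N-2}{2}\bigr)^2\) in the Hardy inequality. The execution, however, is genuinely different in two places. To turn the nonlocal term into an inverse-square potential, the paper fixes \(\varphi \in C^\infty_c(\R^N\setminus\{0\})\), rescales it as \(\varphi(\cdot/R)\), and uses the degree \(-2\) homogeneity of \(I_{N-2}\) together with Fatou's lemma to obtain \(\int_{\R^N}\abs{u_\varepsilon}^2 \int_{\R^N} I_{N-2}\abs{\varphi}^2 \le \varepsilon^N\int_{\R^N}\abs{\nabla\varphi}^2\) in the limit, and then quotes the optimal Hardy inequality as a black box; you instead localize the mass to \(B_\rho\) and use the elementary pointwise bound \(I_{N-2}(x-y)\ge \bigl(C_N\abs{x}^2(1+\rho/R)^2\bigr)^{-1}\), and you extract the sharp constant by exhibiting the Hardy near-optimizers \(\abs{x}^{-(N-2)/2}\) with logarithmic cutoffs, handling the potential term by a dyadic sum. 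Both routes are valid; yours is more self-contained (no Fatou argument, no citation of the optimal Hardy constant) at the price of the explicit radial computation, while the paper's scaling argument is shorter once the optimal Hardy inequality is granted. One remark: the final ``balancing'' of \(L(R)\) that you single out as the main obstacle is not actually needed. Since both the dyadic sum bounding \(\int V\varphi_{R,L}^2\) and \(\int\varphi_{R,L}^2/\abs{x}^2\) are comparable to \(\ln(L/R)\), their ratio is \(O\bigl(\sup_{r\ge R} r^{-(N-2)}\int_{B_{2r}\setminus B_r}V\bigr)\) uniformly in \(L\), so any choice with \(L/R\to\infty\) (say \(L=R^2\)) makes both error terms vanish as \(R\to\infty\); your specific choice also works, it is just not forced.
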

\begin{proof}
As in the proof of proposition~\ref{propositionNonConcentration}, by a ground-state transformation,
we have for every \(\varphi \in C^\infty_c (\R^N)\) the inequality
\[
 \int_{\R^N} \varepsilon^2 \abs{\nabla \varphi}^2 + V \abs{\varphi}^2
 \ge \varepsilon^{-\alpha} \int_{\R^N} \bigl(I_\alpha \ast \abs{u_\varepsilon}^2 \bigr) \varphi^2.
\]
For every \(\varphi \in C^\infty_c (\R^N\setminus\{0\}) \setminus \{0\}\), we apply the previous inequality to
\(\varphi_R\) defined for \(R > 0\) and \(x \in \R^N\) by
\[
 \varphi_R(x) = \varphi \Bigl(\frac{x}{R}\Bigr).
\]
We observe that by change of variable and our assumption on \(V\),
\[
 \lim_{R \to \infty} \frac{1}{R^{N - 2}}\int_{\R^N} \varepsilon^2 \abs{\nabla \varphi_R}^2 + V \abs{\varphi_R}^2= \int_{\R^N} \varepsilon^2 \abs{\nabla \varphi}^2.
\]
On the other hand, by a change of variable,
\[
 \frac{1}{R^{N - 2}} \int_{\R^N} \bigl(I_{N - 2} \ast \abs{u_\varepsilon}^2 \bigr) \varphi_R^2
 = \int_{\R^N} \frac{1}{R^2} \bigl(I_{N - 2} \ast \abs{u_\varepsilon}^2 \bigr) (R z) \abs{\varphi (z)}^2\dif z.
\]
We take into account the homogeneity of degree \(-2\) of the Riesz potential \(I_{N - 2}\) to rewrite the convolution and we apply Fatou's lemma to conclude that for every \(z \in \R^N \setminus \{0\}\),
\[
 \liminf_{R \to \infty} \frac{1}{R^2} \bigl(I_{N - 2} \ast \abs{u_\varepsilon}^2 \bigr) (R z)
 = \liminf_{R \to \infty}  \int_{\R^N} I_{N - 2} (z - w/R) \abs{u_\varepsilon (w)}^2 \dif z
 \ge I_{N - 2} (z)\int_{\R^N} \abs{u_\varepsilon}^2.
\]
We obtain thus, by Fatou's lemma again by letting \(R \to \infty\),
\[
  \int_{\R^N} \abs{u_\varepsilon}^2 \int_{\R^N} I_{N - 2} \abs{\varphi}^2
  \le \varepsilon^N \int_{\R^N} \abs{\nabla \varphi}^2.
\]
The conclusion follows by using the optimal constant in the Hardy inequality (see e.g. \cite{Willem2013}*{theorem 6.4.10 and exercise 6.8}).
\end{proof}

It is not clear whether this condition effectively obstructs the construction of concentrating solutions. Indeed, if \(v_\lambda\) is a solution of \eqref{equationLimit} such that \(\mathcal{I}_\lambda (v_\lambda) = \mathcal{E} (\lambda)\), then by proposition~\ref{propositionStrongWellCritical} and the Poho\v zaev identity for \eqref{equationLimit} \cite{MVSGNLSNE}*{Proposition 3.2},
\[
  \int_{\R^N} \abs{v_\lambda}^2 = \frac{2}{\lambda} \mathcal{E} (\lambda) = 2 \mathcal{E} (1).
\]
Further advancement on the problem would require to estimate sharply enough \(\mathcal{E} (1)\).

\section{Concluding remarks}
Here we list several questions related to the existence of concentrating solutions
for the Choquard equation \eqref{equationNLChoquard} which are left open in this work.

\subsection{Concentrating solutions in the locally sublinear case $p<2$}
The functional associated to the Choquard equation \eqref{equationNLChoquard} is a  \emph{superquadratic} perturbation of a local quadratic form for every \(p>1\).
The limiting equation \eqref{eqLimit} is variationally well--posed for \(p\in\big(\frac{N + \alpha}{N},\frac{N + \alpha}{(N - 2)_+}\big)\), see for example \cite{MVSNENLSNE}.
Although we prove the existence of solutions of the penalized problem for $p\in\big(1,\frac{N + \alpha}{(N - 2)_+}\big)$ (see proposition \ref{propositionPenalizedExistence}),
our penalization  methods covers only the locally superlinear range \(p\ge 2\).
In fact the only essential step in our considerations which requires $p\ge 2$ is the construction of the linear equation outside small balls (proposition~\ref{propositionSubsolutions}).

The existence of families of concentrating positive solutions for \eqref{equationNLChoquard}
in the locally sublinear range \(p\in\big(\frac{N+\alpha}{N},2\big)\) remains open and seems to be a delicate problem.

\subsection{Concentration in the presence of weak critical potential wells}
When \(p=2\), \(\alpha > N - 2\) and \(V\) vanishes at some points in \(\R^N\setminus\Lambda\)
at a rate weaker then the strong critical potential well assumption \eqref{vanishingrate}
of theorem \ref{theoremNonConcentrationSpecific}, the existence of families of positive solutions for \eqref{equationNLChoquard} which concentrate in \(\Lambda\) remains open.
In this case it might be possible to construct solutions concentrating in \(\Lambda\) by gluing the solutions of a suitably penalized nonlocal problem with the first Neumann eigenfunction of
\[
  -\varepsilon^2 \Delta u (x) + c \abs{x - a_*}^{\frac{4}{\alpha + 2 - N}-2} u (x) = \varepsilon^{-\alpha} u (x)\quad\text{in \(B_{\varepsilon\rho}(a_*)\).}
\]
Similar approach should be possible if \(\alpha = N - 2\) and \(\liminf_{x \to a} V (x)/\abs{x}^{\gamma} > 0\) for some \(\gamma > 0\).

A more difficult problem is to tackle the case when \(\alpha = N - 2\) and \(V\) is allowed to take any nonnegative value outside \(\Lambda\), even to vanish on some open set.

\subsection{Concentration around critical potential wells}
The existence of families of positive solutions for the Choquard equation \eqref{equationNLChoquard} which concentrate around critical potential wells of \(V\) remains open. Relevant results obtained in the case of
the local nonlinear Schr\"odinger equations \citelist{\cite{ByeonWang2003}\cite{ByeonWang2002}} suggest that the form of the limiting equation
should strongly depend on the rate at which the potential \(V\) vanishes at the point.

\subsection{Multipeak solutions and concentration around critical point or critical manifolds}
In the case  \(N = 3\), \(\alpha = 2\), \(p = 2\), families of solutions for the Choquard equation
which concentrate to nondegenerate critical points of \(V\) have been constructed
in \citelist{\cite{WeiWinter2009}\cite{Secchi2010}} via a Lyapunov--Schmidt type reduction techniques.
In addition, \cite{WeiWinter2009} establishes the existence of \emph{multi--peak} solutions.
The penalization method developed in this work should be applicable for the study of families
of concentrating and multipeak solutions for \eqref{equationNLChoquard} located around local maxima
or critical points of \(V\); as well as around model critical manifolds,
such as curves and lower--dimensional spheres. In the local context of equation \eqref{equationLocal}
related results were obtained within the relevant penalization framework in
\citelist{\cite{BonheureDiCosmoVanSchaftingen2012}\cite{DiCosmoVanSchaftingen}}
(see also \citelist{\cite{delPinoFelmer1997}
\cite{delPinoFelmer1998}
\cite{AmbrosettiMalchiodi2006}
\cite{AmbrosettiMalchiodi2007}
\cite{CingolaniSecchi}}).

\subsection{Concentrating solutions via the Lyapunov--Schmidt reduction}
Families of concentrating solutions for the Choquard equation have been constructed
via a \emph{Lyapunov--Schmidt type reduction} only in the case  \(N = 2\), \(\alpha = 2\), \(p = 2\)
and under some restrictions on the decay of the potential \(V\), see \citelist{\cite{WeiWinter2009}\cite{Secchi2010}}. It is an interesting question wh	ether the Lyapunov--Schmidt reduction techniques
could be extended to the framework of the present work, that is, to a
natural range of exponents \(p\) and to the classes of potentials \(V\)
with optimal decay assumptions at infinity. The difficulty in applying small perturbation techniques
is largely due to the fact that in the case of fast decaying or compactly supported potentials
the original Choquard equation \eqref{equationNLChoquard} and the limit equation \eqref{eqLimit} are well--posed in different functional spaces, which limits applicability of standard small perturbation methods.
In the context of local equation \eqref{equationLocal} an Lyapunov--Schmidt type reduction approach
which allows to handle this issue was recently developed in \cite{Kwon-2012}.

\begin{bibdiv}
\begin{biblist}

\bib{Adams1975}{book}{
   author={Adams, Robert A.},
   title={Sobolev spaces},
   series={Pure and Applied Mathematics},
   volume={65},
   publisher={Academic Press},
   address={New York-London},
   date={1975},
   pages={xviii+268},
}

\bib{Agmon1983}{article}{
    AUTHOR = {Agmon, Shmuel},
     TITLE = {On positivity and decay of solutions of second order elliptic
              equations on {R}iemannian manifolds},
    conference={
      title={Methods of functional analysis and theory of elliptic
              equations},
      address={Naples},
      date={1982},
   },
   book={
      publisher={Liguori},
      address={Naples},
      date={1983},
   },
     PAGES = {19--52},
}

\bib{AmbrosettiBadialeCingolani}{article}{
   author={Ambrosetti, A.},
   author={Badiale, M.},
   author={Cingolani, S.},
   title={Semiclassical states of nonlinear Schr\"odinger equations},
   journal={Arch. Rational Mech. Anal.},
   volume={140},
   date={1997},
   number={3},
   pages={285--300},
   issn={0003-9527},
}

\bib{AmbrosettiFelliMalchiodi2005}{article}{
   author={Ambrosetti, Antonio},
   author={Felli, Veronica},
   author={Malchiodi, Andrea},
   title={Ground states of nonlinear Schr\"odinger equations with potentials
   vanishing at infinity},
   journal={J. Eur. Math. Soc. (JEMS)},
   volume={7},
   date={2005},
   number={1},
   pages={117--144},
   issn={1435-9855},
}

\bib{AmbrosettiMalchiodi2006}{book}{
   author={Ambrosetti, Antonio},
   author={Malchiodi, Andrea},
   title={Perturbation methods and semilinear elliptic problems on ${\mathbf
   R}^n$},
   series={Progress in Mathematics},
   volume={240},
   publisher={Birkh\"auser Verlag},
   place={Basel},
   date={2006},
   pages={xii+183},
   isbn={978-3-7643-7321-4},
   isbn={3-7643-7321-0},
}

\bib{AmbrosettiMalchiodi2007}{article}{
   author={Ambrosetti, Antonio},
   author={Malchiodi, Andrea},
   title={Concentration phenomena for nonlinear Schr\"odinger equations:
   recent results and new perspectives},
   book={
      title={Perspectives in nonlinear partial differential equations},
      series={Contemp. Math.},editor={Berestycki, Henri},
      editor={Bertsch, Michiel},
      editor = {Browder, Felix E.},
      editor = {Nirenberg, Louis},
      editor = {Peletier, Lambertus A.},
      editor = {V\'eron, Laurent},
      volume={446},
      publisher={Amer. Math. Soc.},
      place={Providence, RI},
      date={2007},
   },
   pages={19--30},
}

\bib{AmbrosettiProdi1993}{book}{
   author={Ambrosetti, Antonio},
   author={Prodi, Giovanni},
   title={A primer of nonlinear analysis},
   series={Cambridge Studies in Advanced Mathematics},
   volume={34},
   publisher={Cambridge University Press},
   place={Cambridge},
   date={1993},
   pages={viii+171},
   isbn={0-521-37390-5},
}

\bib{AmbrosettiRabinowitz1973}{article}{
   author={Ambrosetti, Antonio},
   author={Rabinowitz, Paul H.},
   title={Dual variational methods in critical point theory and
   applications},
   journal={J. Functional Analysis},
   volume={14},
   date={1973},
   pages={349--381},
}

\bib{Appell-Zabreiko}{book}{
   author={Appell, J{\"u}rgen},
   author={Zabrejko, Petr P.},
   title={Nonlinear superposition operators},
   series={Cambridge Tracts in Mathematics},
   volume={95},
   publisher={Cambridge University Press},
   place={Cambridge},
   date={1990},
   pages={viii+311},
   isbn={0-521-36102-8},
}

\bib{BonheureDiCosmoVanSchaftingen2012}{article}{
   author={Bonheure, Denis},
   author={Di Cosmo, Jonathan},
   author={Van Schaftingen, Jean},
   title={Nonlinear Schr\"odinger equation with unbounded or vanishing
   potentials: solutions concentrating on lower dimensional spheres},
   journal={J.~Differential Equations},
   volume={252},
   date={2012},
   number={2},
   pages={941--968},
   issn={0022-0396},
}

\bib{BonheureVanSchaftingen2006}{article}{
   author={Bonheure, Denis},
   author={Van Schaftingen, Jean},
   title={Nonlinear Schr\"odinger equations with potentials vanishing at
   infinity},
   journal={C. R. Math. Acad. Sci. Paris},
   volume={342},
   date={2006},
   number={12},
   pages={903--908},
   issn={1631-073X},
}
		
\bib{BonheureVanSchaftingen2008}{article}{
   author={Bonheure, Denis},
   author={Van Schaftingen, Jean},
   title={Bound state solutions for a class of nonlinear Schr\"odinger
   equations},
   journal={Rev. Mat. Iberoam.},
   volume={24},
   date={2008},
   number={1},
   pages={297--351},
   issn={0213-2230},
}

\bib{BonheureVanSchaftingen2010}{article}{
   author={Bonheure, Denis},
   author={Van Schaftingen, Jean},
   title={Groundstates for the nonlinear Schr\"odinger equation with
   potential vanishing at infinity},
   journal={Ann. Mat. Pura Appl. (4)},
   volume={189},
   date={2010},
   number={2},
   pages={273--301},
   issn={0373-3114},
}
		
\bib{ByeonWang2002}{article}{
   author={Byeon, Jaeyoung},
   author={Wang, Zhi-Qiang},
   title={Standing waves with a critical frequency for nonlinear
   Schr\"odinger equations},
   journal={Arch. Ration. Mech. Anal.},
   volume={165},
   date={2002},
   number={4},
   pages={295--316},
   issn={0003-9527},
}

\bib{ByeonWang2003}{article}{
   author={Byeon, Jaeyoung},
   author={Wang, Zhi-Qiang},
   title={Standing waves with a critical frequency for nonlinear
   Schr\"odinger equations. II},
   journal={Calc. Var. Partial Differential Equations},
   volume={18},
   date={2003},
   number={2},
   pages={207--219},
   issn={0944-2669},
}

\bib{CingolaniClappSecchi2012}{article}{
   author={Cingolani, Silvia},
   author={Clapp, M{\'o}nica},
   author={Secchi, Simone},
   title={Multiple solutions to a magnetic nonlinear Choquard equation},
   journal={Z. Angew. Math. Phys.},
   volume={63},
   date={2012},
   number={2},
   pages={233--248},
   issn={0044-2275},
}

\bib{CingolaniJeanjeanSecchi2009}{article}{
   author={Cingolani, Silvia},
   author={Jeanjean, Louis},
   author={Secchi, Simone},
   title={Multi-peak solutions for magnetic NLS equations without
   non-degeneracy conditions},
   journal={ESAIM Control Optim. Calc. Var.},
   volume={15},
   date={2009},
   number={3},
   pages={653--675},
   issn={1292-8119},
}

\bib{CingolaniSecchi}{article}{
   author={Cingolani, Silvia},
   author={Secchi, Simone},
   title={Multiple \(\mathbb{S}^{1}\)-orbits for the Schr\"odinger-Newton system},
   journal={Differential and Integral Equations},
   volume={26},
   number={9/10},
   pages={867--884},
   date={2013},
}

\bib{CingolaniSecchiSquassina2010}{article}{
   author={Cingolani, Silvia},
   author={Secchi, Simone},
   author={Squassina, Marco},
   title={Semi-classical limit for Schr\"odinger equations with magnetic
   field and Hartree-type nonlinearities},
   journal={Proc. Roy. Soc. Edinburgh Sect. A},
   volume={140},
   date={2010},
   number={5},
   pages={973--1009},
   issn={0308-2105},
}

\bib{ClappSalazar}{article}{
   author={Clapp, M{\'o}nica},
   author={Salazar, Dora},
   title={Positive and sign changing solutions to a nonlinear Choquard
   equation},
   journal={J. Math. Anal. Appl.},
   volume={407},
   date={2013},
   number={1},
   pages={1--15},
   issn={0022-247X},
}

\bib{delPinoFelmer1997}{article}{
   author={del Pino, Manuel},
   author={Felmer, Patricio L.},
   title={Semi-classical states for nonlinear Schr\"odinger equations},
   journal={J.~Funct. Anal.},
   volume={149},
   date={1997},
   number={1},
   pages={245--265},
   issn={0022-1236},
}

\bib{delPinoFelmer1998}{article}{
   author={del Pino, Manuel},
   author={Felmer, Patricio L.},
   title={Multi-peak bound states for nonlinear Schr\"odinger equations},
   journal={Ann. Inst. H. Poincar\'e Anal. Non Lin\'eaire},
   volume={15},
   date={1998},
   number={2},
   pages={127--149},
   issn={0294-1449},
}

\bib{DiCosmoVanSchaftingen}{article}{
  author = {Di Cosmo, Jonathan},
  author = {Van Schaftingen, Jean},
  title = {Stationary solutions of the nonlinear Schr\"odinger equation with fast-decay potentials concentrating around local maxima},
  journal = {Calc. Var. Partial Differential Equations},
  volume={47},
  date={2013},
  number={1--2},
  pages={243-271},
}

\bib{FloerWeinstein}{article}{
   author={Floer, Andreas},
   author={Weinstein, Alan},
   title={Nonspreading wave packets for the cubic Schr\"odinger equation
   with a bounded potential},
   journal={J. Funct. Anal.},
   volume={69},
   date={1986},
   number={3},
   pages={397--408},
   issn={0022-1236},
}
		
\bib{GenevVenkov2012}{article}{
   author={Genev, Hristo},
   author={Venkov, George},
   title={Soliton and blow-up solutions to the time-dependent
   Schr\"odinger-Hartree equation},
   journal={Discrete Contin. Dyn. Syst. Ser. S},
   volume={5},
   date={2012},
   number={5},
   pages={903--923},
   issn={1937-1632},
}

\bib{He1977}{article}{
   author={Herbst, Ira W.},
   title={Spectral theory of the operator
   \((p^{2}+m^{2})^{1/2}-Ze^{2}/r\)},
   journal={Comm. Math. Phys.},
   volume={53},
   date={1977},
   number={3},
   pages={285--294},
   issn={0010-3616},
}

\bib{KRWJones1995gravitational}{article}{
  title={Gravitational Self-Energy as the Litmus of Reality},
  author={Jones, K. R. W.},
  journal={Modern Physics Letters A},
  volume={10},
  number={8},
  pages={657--668},
  year={1995},
  publisher={Singapore: World Scientific, c1986-}
}

\bib{KRWJones1995newtonian}{article}{
  title={Newtonian Quantum Gravity},
  author={Jones, K. R. W.},
  journal={Australian Journal of Physics},
  volume={48},
  number={6},
  pages={1055-1081},
  year={1995},
}

\bib{Kwon-2012}{article}{
   author={Kwon, Ohsang},
   title={Existence of standing waves of nonlinear Schr\"odinger equations
   with potentials vanishing at infinity},
   journal={J. Math. Anal. Appl.},
   volume={387},
   date={2012},
   number={2},
   pages={920--930},
   issn={0022-247X},
}

\bib{Lieb1977}{article}{
   author={Lieb, Elliott H.},
   title={Existence and uniqueness of the minimizing solution of Choquard's
   nonlinear equation},
   journal={Studies in Appl. Math.},
   volume={57},
   date={1976/77},
   number={2},
   pages={93--105},
}

\bib{LiebLoss2001}{book}{
   author={Lieb, Elliott H.},
   author={Loss, Michael},
   title={Analysis},
   series={Graduate Studies in Mathematics},
   volume={14},
   edition={2},
   publisher={American Mathematical Society},
   place={Providence, RI},
   date={2001},
   pages={xxii+346},
   isbn={0-8218-2783-9},
}

\bib{Lions1980}{article}{
   author={Lions, P.-L.},
   title={The Choquard equation and related questions},
   journal={Nonlinear Anal.},
   volume={4},
   date={1980},
   number={6},
   pages={1063--1072},
   issn={0362-546X},
}

\bib{Lions1984-1}{article}{
   author={Lions, P.-L.},
   title={The concentration-compactness principle in the calculus of
   variations. The locally compact case.},
   part = {I},
   journal={Ann. Inst. H. Poincar\'e Anal. Non Lin\'eaire},
   volume={1},
   date={1984},
   number={2},
   pages={109--145},
   issn={0294-1449},
}

\bib{Ma-Zhao-2010}{article}{
   author={Ma Li},
   author={Zhao Lin},
   title={Classification of positive solitary solutions of the nonlinear
   Choquard equation},
   journal={Arch. Ration. Mech. Anal.},
   volume={195},
   date={2010},
   number={2},
   pages={455--467},
   issn={0003-9527},
}

\bib{Menzala1980}{article}{
   author={Menzala, Gustavo Perla},
   title={On regular solutions of a nonlinear equation of Choquard's type},
   journal={Proc. Roy. Soc. Edinburgh Sect. A},
   volume={86},
   date={1980},
   number={3-4},
   pages={291--301},
   issn={0308-2105},
}

\bib{Menzala1983}{article}{
   author={Menzala, Gustavo Perla},
   title={On the nonexistence of solutions for an elliptic problem in
   unbounded domains},
   journal={Funkcial. Ekvac.},
   volume={26},
   date={1983},
   number={3},
   pages={231--235},
   issn={0532-8721},
}

\bib{Moroz-Penrose-Tod-1998}{article}{
   author={Moroz, Irene M.},
   author={Penrose, Roger},
   author={Tod, Paul},
   title={Spherically-symmetric solutions of the Schr\"odinger-Newton
   equations},
   journal={Classical Quantum Gravity},
   volume={15},
   date={1998},
   number={9},
   pages={2733--2742},
   issn={0264-9381},
}

\bib{MorozVanSchaftingen2009}{article}{
   author={Moroz, Vitaly},
   author={Van Schaftingen, Jean},
   title={Existence and concentration for nonlinear Schr\"odinger equations
   with fast decaying potentials},
   journal={C. R. Math. Acad. Sci. Paris},
   volume={347},
   date={2009},
   number={15-16},
   pages={921--926},
   issn={1631-073X},
}
		
\bib{MorozVanSchaftingen2010}{article}{
   author={Moroz, Vitaly},
   author={Van Schaftingen, Jean},
   title={Semiclassical stationary states for nonlinear Schr\"odinger
   equations with fast decaying potentials},
   journal={Calc. Var. Partial Differential Equations},
   volume={37},
   date={2010},
   number={1-2},
   pages={1--27},
   issn={0944-2669},
}

\bib{MVSHardy}{article}{
  author = {Moroz, Vitaly},
  author = {Van Schaftingen, Jean},
  title = {Nonlocal Hardy type inequalities with optimal constants and remainder terms},
  journal = {Ann. Univ. Buchar. Math. Ser.},
  volume={3 (LXI)},
  number={2},
  year={2012},
  pages={187--200},
}

\bib{MVSNENLSNE}{article}{
   author={Moroz, Vitaly},
   author={Van Schaftingen, Jean},
   title={Nonexistence and optimal decay of supersolutions to Choquard
   equations in exterior domains},
   journal={J. Differential Equations},
   volume={254},
   date={2013},
   number={8},
   pages={3089--3145},
   issn={0022-0396},
}

\bib{MVSGNLSNE}{article}{
   author={Moroz, Vitaly},
   author={Van Schaftingen, Jean},
   title={Groundstates of nonlinear Choquard equations: Existence,
   qualitative properties and decay asymptotics},
   journal={J. Funct. Anal.},
   volume={265},
   date={2013},
   number={2},
   pages={153--184},
   issn={0022-1236},
}

\bib{MVSGGCE}{article}{
  author = {Moroz, Vitaly},
  author = {Van Schaftingen, Jean},
  title = { Existence of groundstates for a class of nonlinear Choquard equations},
  note = {to appear in Trans. Amer. Math. Soc.},
  eprint={arXiv:1212.2027}
}

\bib{Pekar1954}{book}{
   author={Pekar, S.},
   title={Untersuchung {\"u}ber die Elektronentheorie der Kristalle},
   publisher={Akademie Verlag},
   place={Berlin},
   date={1954},
   pages={184},
}

\bib{Penrose1996}{article}{
   author={Penrose, Roger},
   title={On gravity's role in quantum state reduction},
   journal={Gen. Relativity Gravitation},
   volume={28},
   date={1996},
   number={5},
   pages={581--600},
   issn={0001-7701},
}

\bib{PinchoverTintarev2006}{article}{
   author={Pinchover, Yehuda},
   author={Tintarev, Kyril},
   title={A ground state alternative for singular Schr\"odinger operators},
   journal={J. Funct. Anal.},
   volume={230},
   date={2006},
   number={1},
   pages={65--77},
   issn={0022-1236},
}

\bib{Rabinowitz1986}{book}{
   author={Rabinowitz, Paul H.},
   title={Minimax methods in critical point theory with applications to
   differential equations},
   series={CBMS Regional Conference Series in Mathematics},
   volume={65},
   publisher={Published for the Conference Board of the Mathematical
   Sciences, Washington, DC},
   date={1986},
   pages={viii+100},
   isbn={0-8218-0715-3},
}

\bib{Riesz}{article}{
   author={Riesz, Marcel},
   title={L'int\'egrale de Riemann-Liouville et le probl\`eme de Cauchy},
   journal={Acta Math.},
   volume={81},
   date={1949},
   pages={1--223},
   issn={0001-5962},
}

\bib{Schwartz1969}{book}{
   author={Schwartz, J. T.},
   title={Nonlinear functional analysis},
   publisher={Gordon and Breach},
   place={New York},
   date={1969},
   pages={vii+236},
}

\bib{Secchi2010}{article}{
   author={Secchi, Simone},
   title={A note on Schr\"odinger-Newton systems with decaying electric
   potential},
   journal={Nonlinear Anal.},
   volume={72},
   date={2010},
   number={9-10},
   pages={3842--3856},
   issn={0362-546X},
}

\bib{Stein-Weiss}{article}{
   author={Stein, E. M.},
   author={Weiss, Guido},
   title={Fractional integrals on \(n\)-dimensional Euclidean space},
   journal={J. Math. Mech.},
   volume={7},
   date={1958},
   pages={503--514},
}

\bib{Struwe2008}{book}{
   author={Struwe, Michael},
   title={Variational methods},
   series={Ergebnisse der Mathematik und ihrer Grenzgebiete. 3. Folge},
   volume={34},
   edition={4},
   subtitle={Applications to nonlinear partial differential equations and
   Hamiltonian systems},
   publisher={Springer},
   place={Berlin},
   date={2008},
   pages={xx+302},
   isbn={978-3-540-74012-4},
}

\bib{Tod-Moroz-1999}{article}{
   author={Tod, Paul},
   author={Moroz, Irene M.},
   title={An analytical approach to the Schr\"odinger-Newton equations},
   journal={Nonlinearity},
   volume={12},
   date={1999},
   number={2},
   pages={201--216},
   issn={0951-7715},
}

\bib{WeiWinter2009}{article}{
   author={Wei, Juncheng},
   author={Winter, Matthias},
   title={Strongly interacting bumps for the Schr\"odinger-Newton equations},
   journal={J.~Math. Phys.},
   volume={50},
   date={2009},
   number={1},
   pages={012905, 22},
   issn={0022-2488},
}

\bib{WillemMinimax}{book}{
   author={Willem, Michel},
   title={Minimax theorems},
   series={Progress in Nonlinear Differential Equations and their
   Applications, 24},
   publisher={Birkh\"auser},
   place={Boston, Mass.},
   date={1996},
   pages={x+162},
}

\bib{Willem2013}{book}{
  author = {Willem, Michel},
  title = {Functional analysis},
  subtitle = {Fundamentals and Applications},
  series={Cornerstones},
  publisher = {Birkh\"auser},
  place = {Basel},
  volume = {XIV},
  pages = {213},
  date={2013},
}

\bib{YinZhang2009}{article}{
   author={Yin, Huicheng},
   author={Zhang, Pingzheng},
   title={Bound states of nonlinear Schr\"odinger equations with potentials
   tending to zero at infinity},
   journal={J. Differential Equations},
   volume={247},
   date={2009},
   number={2},
   pages={618--647},
   issn={0022-0396},
}
\end{biblist}

\end{bibdiv}

\end{document}